\theoremstyle{plain}
\newtheorem{theorem}{Theorem}[section]
\newtheorem{corollary}[theorem]{Corollary}
\newtheorem{lemma}[theorem]{Lemma}
\newtheorem{proposition}[theorem]{Proposition}
\newtheorem{remark}[theorem]{Remark}
\theoremstyle{definition}
\newtheorem{assumption}{Assumption}
\numberwithin{equation}{section}
\newcommand{\bbr}{\mathbb{R}}
\newcommand{\bbi}{\mathbb{I}}
\newcommand{\bbn}{\mathbb{N}}
\newcommand{\bbt}{\mathbb{T}}
\newcommand{\bbp}{\mathbb{P}}
\newcommand{\bv}{\mathbf{v}}
\newcommand{\vf}{\bv_{f}}
\newcommand{\vs}{\bv_{s}}
\newcommand{\bu}{\mathbf{u}}
\newcommand{\uf}{\bu_{f}}
\newcommand{\us}{\bu_{s}}
\newcommand{\hv}{\hat{\bv}}
\newcommand{\hu}{\hat{\bu}}
\newcommand{\hvf}{\hv_{f}}
\newcommand{\hvs}{\hv_{s}}
\newcommand{\hus}{\hu_{s}}
\newcommand{\bF}{\mathbf{F}}
\newcommand{\Fse}{\bF_{s,e}}
\newcommand{\hF}{\hat{\bF}}
\newcommand{\hFf}{\hF_{f}}
\newcommand{\hFs}{\hF_{s}}
\newcommand{\hFse}{\hF_{s,e}}
\newcommand{\hFsg}{\hF_{s,g}}
\newcommand{\Oft}{\Omega_{f}^{t}}
\newcommand{\Ost}{\Omega_{s}^{t}}
\newcommand{\Gt}{\Gamma^{t}}
\newcommand{\Of}{\Omega_{f}}
\newcommand{\Os}{\Omega_{s}}
\newcommand{\Gs}{\Gamma_s}
\newcommand{\Gst}{\Gamma_s^t}
\newcommand{\OM}{\Omega}
\newcommand{\hnab}{\hat{\nabla}}
\newcommand{\Jse}{J_{s,e}}
\newcommand{\hJ}{\hat{J}}
\newcommand{\hJf}{\hJ_f}
\newcommand{\hJs}{\hJ_s}
\newcommand{\hJse}{\hJ_{s,e}}
\newcommand{\hJsg}{\hJ_{s,g}}
\newcommand{\hg}{\hat{g}}
\newcommand{\rf}{\rho_f}
\newcommand{\rs}{\rho_s}
\newcommand{\hr}{\hat{\rho}}
\newcommand{\hrf}{\hr_f}
\newcommand{\hrs}{\hr_s}
\newcommand{\hc}{\hat{c}}
\newcommand{\hcf}{\hc_f}
\newcommand{\hcs}{\hc_s}
\newcommand{\hcss}{\hcs^*}
\newcommand{\nuf}{\nu_f}
\newcommand{\nus}{\nu_s}
\newcommand{\vp}{\varphi}
\newcommand{\cL}{\mathcal{L}}
\newcommand{\wo}{\bw_0}
\newcommand{\cA}{\mathcal{A}}
\newcommand{\cR}{\mathcal{R}}
\newcommand{\cF}{\mathcal{F}}
\newcommand{\bn}{\mathbf{n}}
\newcommand{\hn}{\hat{\bn}}
\newcommand{\ngt}{\bn_{\Gt}}
\newcommand{\ngst}{\bn_{\Gst}}
\newcommand{\hng}{\hn_{\Gamma}}
\newcommand{\hngs}{\hn_{\Gs}}
\newcommand{\hpi}{\hat{\pi}}
\newcommand{\pif}{\pi_f}
\newcommand{\pis}{\pi_s}
\newcommand{\hpif}{\hpi_f}
\newcommand{\hpis}{\hpi_s}
\newcommand{\cD}{\mathcal{D}}
\newcommand{\Dq}{\cD_q}
\newcommand{\Dqv}{\Dq^1}
\newcommand{\Dqc}{\Dq^2}
\newcommand{\YT}{Y_T}
\newcommand{\ZT}{Z_T}
\newcommand{\bS}{\mathbf{S}}
\newcommand{\bK}{\mathbf{K}}
\newcommand{\tk}{\tilde{\bK}}
\newcommand{\kf}{\tk_f}
\newcommand{\ks}{\tk_s}
\newcommand{\bH}{\mathbf{H}}
\newcommand{\bh}{\mathbf{h}}
\newcommand{\FfOinv}{\inv{\hFf} - \bbi}
\newcommand{\FfOinvtran}{\invtr{\hFf} - \bbi}
\newcommand{\FsOinv}{\inv{\hFs} - \bbi}
\newcommand{\FsOinvtran}{\invtr{\hFs} - \bbi}
\newcommand{\TD}{T^{\delta}}
\newcommand{\hD}{\hat{D}}
\newcommand{\hDf}{\hD_f}
\newcommand{\hDs}{\hD_s}
\newcommand{\vo}{\hv^0}
\newcommand{\co}{\hc^0}
\newcommand{\tF}{\tilde{F}}
\newcommand{\tFf}{\tF_f}
\newcommand{\tFs}{\tF_s}
\newcommand{\barpi}{\bar{\pi}}
\newcommand{\tpi}{\tilde{\pi}}
\newcommand{\bR}{\mathbf{R}}
\newcommand{\sL}{\mathscr{L}}
\newcommand{\sN}{\mathscr{N}}
\newcommand{\sM}{\mathscr{M}}
\newcommand{\cE}{\mathcal{E}}
\newcommand{\cM}{\mathcal{M}}
\newcommand{\bx}{\mathbf{x}}
\newcommand{\bX}{\mathbf{X}}
\newcommand{\bQ}{\mathbf{Q}}
\newcommand{\be}{\mathbf{e}}
\newcommand{\bw}{\mathbf{w}}
\newcommand{\hwf}{\hat{\bw}_f}
\newcommand{\hws}{\hat{\bw}_s}
\newcommand{\Smu}{S_{\mu}}
\newcommand{\ptial}[1]{ \partial_{#1} }
\renewcommand{\Bar}[1]{\overline{#1}}
\newcommand{\cP}{\mathcal{P}}
\newcommand{\bbe}{\mathbb{E}}
\newcommand{\bbf}{\mathbb{F}}
\newcommand{\delt}{\delta(T)}
\newcommand{\onehalf}{\frac{1}{2}}
\newcommand{\oneq}{\frac{1}{q}}
\newcommand{\onehalfq}{\frac{1}{2q}}
\newcommand{\onequater}{\frac{1}{4}}
\newcommand{\tr}{\mathrm{tr}}
\newcommand{\rv}[1]{\left. #1 \right\vert}
\newcommand{\rvm}[1]{ #1 \vert}
\newcommand{\tran}[1]{ #1^{\top}}
\newcommand{\inv}[1]{ #1^{-1}}
\newcommand{\invtr}[1]{ #1^{-\top}}
\renewcommand{\d}{\mathrm{d}}
\newcommand{\pt}{\partial_t}
\newcommand{\jump}[1]{\left\llbracket #1 \right\rrbracket}
\newcommand{\abs}[1]{\left\vert #1 \right \vert}
\newcommand{\norm}[1]{\left\Vert #1 \right \Vert}
\newcommand{\normm}[1]{\Vert #1 \Vert}
\newcommand{\seminorm}[1]{\left[ #1 \right]}
\newcommand{\inner}[2]{\left\langle #1 , #2 \right\rangle}
\newcommand{\Lq}[1]{L^{#1}}
\newcommand{\Lqs}[1]{L^{#1}_{\sigma}}
\newcommand{\W}[1]{W^{#1}_{q}}
\newcommand{\WO}[1]{{_0W}^{#1}_{q}}
\newcommand{\Bq}[1]{B^{#1}_{q,q}}
\newcommand{\Bqp}[1]{B^{#1}_{q,p}}
\newcommand{\tin}{\quad \text{in }}
\newcommand{\ton}{\quad \text{on }}
\renewcommand{\H}[1]{H^{#1}_{q}}
\newcommand{\HO}[1]{{_0H}^{#1}_{q}}
\newcommand{\K}[1]{K^{#1}_{q}}
\newcommand{\KO}[1]{{_0K}^{#1}_{q}}
\def\normdist{0.3ex}
\newcommand{\NORM}[1]{\left\vert\kern-\normdist\left\vert\kern-\normdist\left\vert #1 \right\vert\kern-\normdist\right\vert\kern-\normdist\right\vert}
\newcommand{\arxiv}[1]{arXiv: \href{https://arxiv.org/abs/#1}{#1}}
\DeclareMathOperator*{\Div}{\mathrm{div}}
\DeclareMathOperator*{\hdiv}{\widehat{\Div}}
\newcommand{\dist}{\mathrm{dist}}
\newcommand{\symm}{\mathrm{sym}}
\newcommand{\hDelta}{\widehat{\Delta}}
\begin{document}
	
	\title[Quasi-stationary fluid-structure interaction problem]{Short time existence of a quasi-stationary fluid-structure interaction problem for plaque growth}
	
	
	\author{Helmut Abels}
	\address{Fakult\"at f\"ur Mathematik, Universit\"at Regensburg, 93053 Regensburg, Germany}
	\email{Helmut.Abels@ur.de}

	\author{Yadong Liu}
	\address{Fakult\"at f\"ur Mathematik, Universit\"at Regensburg, 93053 Regensburg, Germany}
	\email{Yadong.Liu@ur.de}
	\date{\today}
	
	\subjclass[2020]{Primary: 35R35; Secondary: 35Q30, 74F10, 74B20, 76D05}
	\keywords{Fluid-structure interaction, hyperelasticity, quasi-stationary, growth, free boundary problem, maximal regularity} 
	
	\thanks{Y. Liu is supported by the RTG 2339 ``Interfaces, Complex Structures, and Singular Limits'' of the German Science Foundation (DFG). The support is gratefully acknowledged.}
	
	\begin{abstract}
		We address a quasi-stationary fluid-structure interaction problem coupled with cell reactions and growth, which comes from the plaque formation during the stage of the atherosclerotic lesion in human arteries. The blood is modeled by the incompressible Navier--Stokes equation, while the motion of vessels is captured by a quasi-stationary equation of nonlinear elasticity. The growth happens when both cells in fluid and solid react, diffuse and transport across the interface, resulting in the accumulation of foam cells, which are exactly seen as the plaques. Via a fixed-point argument, we derive the local well-posedness of the nonlinear system, which is sustained by the analysis of decoupled linear systems.
	\end{abstract}
	
	\maketitle
	
%
	
	
\section{Introduction}
	\label{sec:introduction}
	\subsection{Model description}
	\label{sec:model-description}
	In this article, we consider a quasi-stationary fluid-structure interaction problem for plaque growth, which describes the formation of plaque during the reaction-diffusion and transport of different cells in human blood and vessels.
	The problem is set up in a smooth domain $ \OM^t \subset \bbr^3 $, with three disjoint parts $ \OM^t = \Oft \cup \Ost \cup \Gt $, where $ \Gamma^t = \partial \Oft $, $ \Bar{\Oft} \subset \OM^t $ and $ \Oft $, $ \Ost $ denote the domains for the fluid and solid, respectively. $ \Gst = \partial \Omega^t $ stands for the outer boundary of $ \OM^t $, which is also a free boundary. 
	Given $ T > 0 $, let
	\begin{alignat*}{4}
		Q^T & := \bigcup_{t \in (0,T)} \Omega^t \times \{ t \}, & \quad 
		Q_{f/s}^T & := \bigcup_{t \in (0,T)} \Omega_{f/s}^t \times \{ t \}, \\
		S^T & := \bigcup_{t \in (0,T)} \Gamma^t \times \{ t \}, & \quad 
		S_s^T & := \bigcup_{t \in (0,T)} \Gamma_s^t \times \{ t \}.
	\end{alignat*}
	As in \cite{AL2021a,AL2021b,Yang2016}, the flood is described by the incompressible Navier--Stokes equation
	\begin{alignat}{3}
			\rho_f (\pt + \bv_f \cdot \nabla) \bv_f & = \Div \bbt_f, && \tin Q_f^T, \label{Eqs:v_f-Eulerian}\\
			\Div \bv_f & = 0, && \tin Q_f^T, \label{Eqs:fluid-mass-Eulerian}
	\end{alignat}
	where $ \bbt_f := - \pi_f \bbi + \nu_s (\nabla \bv_f + \nabla^\top \bv_f) $ denotes the Cauchy stress tensor. $ \vf : \bbr^3 \times \bbr_+ \rightarrow \bbr^3 $, $ \pif : \bbr^3 \times \bbr_+ \rightarrow \bbr $ are the unknown velocity and pressure of the fluid. $ \rf > 0 $ stands for the fluid density and $ \nus $ represents the viscosity of the fluid.
	
	Compared to the problems in \cite{AL2021a,AL2021b,Yang2016}, where the evolutionary neo-Hookean material was employed, the vessel in this manuscript is assumed to be quasi-stationary since it moves far slower than the blood from a macro point of view. Thus, we model the blood vessel by the equilibrium of a nonlinear elastic equation. To mathematically describe the elasticity conveniently, the Lagrangian coordinate was commonly used, see e.g. \cite{Goriely2017:growth,Gurtin2010}. 
	Thus, we set reference configuration as the initial domain which is defined by $ \OM := \rvm{\OM^t}_{t = 0} $, as well as $ \Of = \Omega_f^0 $, $ \Os = \Omega_s^0 $ and $ \Gamma = \Gamma^0 $, then $ \OM = \Of \cup \Os \cup \Gamma $. Let $ \bX $ be the spatial variable in the reference configuration. Now we introduce the Lagrangian flow map
	\begin{equation*}
		\vp: \OM \times (0,T) \rightarrow Q_T,
	\end{equation*}
	with 
	\begin{equation}
		\label{Eqs:Lagrangian flow map}
		\bx(\bX, t) = \vp(\bX,t) = \bX + \bu(\bx(\bX, t), t)
	\end{equation}
	for all $ \bX \in \OM $ and $ \bx(\bX, 0) = \bX $, where 
	\begin{equation*}
		\bu(\bx(\bX, t), t)
		= \left\{
			\begin{aligned}
				& \uf(\bx(\bX, t),t) = \int_{0}^t \vf(\bx(\bX, t), \tau) \d \tau, && \text{if } \bX \in \Of, \\
				& \us(\bx(\bX, t),t), && \text{if } \bX \in \Os,
			\end{aligned}
		\right.
	\end{equation*}
	denotes the displacement for fluid or solid. In the sequel, without special statement, the quantities with a hat will indicate those in Lagrangian reference configuration, e.g., $ \hat{\bu}(\bX, t) = \bu(\bx(\bX, t), t) $, while the operators with a hat means those act on the quantities in Lagrangian coordinate. Then the tensor field
	\begin{equation}
		\label{Eqs:deformation gradient}
		\bF(\bx(\bX,t),t) = \hF(\bX,t) := \frac{\partial}{\partial \bX} \vp(\bX,t) = \hnab \vp(\bX,t) = \bbi + \hnab \hu(\bX, t), \  \forall\, \bX \in \OM,
	\end{equation}
	with $ \hFf(\bX, 0) = \bbi $ and $ \hFs(\bX, 0) = \bbi + \hnab \hus^0 $ is referred to be the deformation gradient and $ J = \hJ := \det(\hF) $ denotes its determinant. 
	For the blood vessels, since the growth is taken into account, we impose the so-called \textit{multiplicative decomposition} for the solid deformation gradient $ \hFs $ as
	\begin{equation*}
		\hFs = \hFse\hFsg
	\end{equation*}
	with $ \hJs = \hJse \hJsg $, where $ \hFse $ is the pure elastic deformation tensor and $ \hFsg $ denotes the growth tensor, which will be specified later. For more details about the decomposition, see e.g. \cite{Goriely2017:growth,JC2012,RHM1994,Yang2016}.
	Inspired by Goriely \cite[Chapter 11--13]{Goriely2017:growth} and Jones--Chapman \cite[Section 3.2]{JC2012}, a general incompressible hyperelastic material is considered for solid as
	\begin{equation}
		\label{Eqs:elastic-Eulerian}
		- \Div \bbt_s = 0, \tin Q_s^T, 
	\end{equation}
	where $	\bbt_s := - \pis \bbi + \Jse^{-1} DW(\Fse) \tran{\Fse} $ stands for the Cauchy stress tensor. $ \pis : \bbr^3 \times \bbr_+ \rightarrow \bbr $ is the unknown pressure of the solid. The scalar function $ W : \bbr^{3 \times 3} \rightarrow \bbr_+ $ is called the strain energy density function (also known as the stored energy density), which needs some general assumptions for the sake of analysis. 
	\begin{assumption}
		\label{assumption:W-energy-density}
		$  $ \par
		\begin{enumerate}[label=\textbf{(H\arabic*)}] 
			\item \label{assumptions:frame-difference} $ W $ is frame-indifferent, i.e., $ W(\bR \bF) = W(\bF) $, for all $ \bR \in SO(3) $ and $ \bF \in \bbr^{3 \times 3} $, where $ SO(3) := \{\mathbf{A} \in \bbr^{3 \times 3}: \tran{\mathbf{A}} \mathbf{A} = \bbi, \det \mathbf{A} = 1\} $ is the set of all proper orthogonal tensors.
			\item \label{assumptions:smoothness} $ W \in C^4(\bbr^{3 \times 3}; \bbr) $.
			\item \label{assumptions:DW(I)} $ DW(\bbi) = 0 $, $ W(\bR) = 0 $ for all $ \bR \in SO(3) $.
			\item \label{assumptions:coercive} There exists a constant $ C_0 > 0 $, such that $ W(\bF) \geq C_0 \dist^2(\bF, SO(3)) $.
		\end{enumerate}
		Here, $ DW(\bF) := \frac{\partial W}{\partial F_{ij}} \be_i \otimes \be_j $ for all $ \bF \in \bbr^{3 \times 3} $. $ \dist(\bF, SO(3)) := \min_{\bQ \in SO(d)} \abs{\bF - \bQ} $.
	\end{assumption}
	\begin{remark}
		\label{remark:epllipticity}
		In fact, the Assumption \ref{assumptions:coercive} implies that
		\begin{equation*}
			D^2 W(\bbi)\bF : \bF \geq C_1 \abs{\symm \bF}^2,
		\end{equation*}
		 for some constant $ C_1 > 0 $, where $ \symm \bF:= \onehalf(\bF + \tran{\bF}) $. Then for any $ \mathbf{a}, \mathbf{b} \in \bbr^3 $, one can derive the so-called \textit{Legendre-Hadamard} condition 
		 \begin{equation*}
			 D^2 W(\bbi)(\mathbf{a} \otimes \mathbf{b}) : (\mathbf{a} \otimes \mathbf{b}) \geq \frac{C_1}{2} \abs{\mathbf{a}}^2 \abs{\mathbf{b}}^2,
		 \end{equation*}
		 by the Taylor expansion and the polar decomposition, which ensures that the operator $ - \Div D^2 W(\bbi) \nabla $ is strongly normally elliptic, see e.g. \cite[Page 271]{PS2016}.
	\end{remark}
	
	Besides \eqref{Eqs:elastic-Eulerian}, the balance of mass should hold as well together with the growth, namely,
	\begin{equation}
		\label{Eqs:solid-mass-Eulerian}
		\left(\pt + \vs \cdot \nabla \right) \rs + \rs \Div \vs = f_s^g, \tin Q_s^T,
	\end{equation}
	where $ \rs > 0 $ is the solid density. $ \vs := \pt \us $ denotes the velocity of the solid. The function $ f_s^g := \gamma f_s^r $ with $ \gamma > 0 $ on the right-hand side of \eqref{Eqs:solid-mass-Eulerian} stands for the growth rate, which comes from the cells reaction assigned later. 
	
	There are two essential assumptions when either the density or the volume does not change with growth. Generally, a \textit{constant-density} type growth is assumed for the incompressible tissue, see e.g. \cite{JC2012,RHM1994}, in this paper we assume this kind of growth as in \cite{AL2021a,AL2021b,Yang2016}. Then by \cite[(13.10)]{Goriely2017:growth}, one obtains
	\begin{equation}
		\label{Eqs:grwoth-before}
		\hrs \tr(\inv{\hFsg} \pt \hFsg) = f_s^g, \tin \Os \times (0,T).
	\end{equation}

	As in \cite{AL2021a,AL2021b,Yang2016}, the plaque grows when the macrophages in the vessels accumulate a lot from the monocytes in the blood flow and turn to be the foam cells. Thus, denoting by $ c_f $, $ c_s $, $ c_s^* $ the monocytes, the macrophages and the foam cells respectively, we introduce
	\begin{alignat}{3}
		\pt c_f + \Div \left( c_f \vf \right) - D_f \Delta c_f & = 0, && \tin Q_f^T, \label{Eqs:c_f-Euler}\\
		\pt c_s + \Div \left( c_s \vs \right) - D_s \Delta c_s & = - f_s^r, && \tin Q_s^T, \label{Eqs:c_s-Euler}\\
		\pt c_s^* + \Div \left( c_s^* \vs \right) & = f_s^r, && \tin Q_s^T, \label{Eqs:c_ss-Euler}
	\end{alignat}
	where $ D_{f/s} > 0 $ are the diffusion coefficients in the blood and the vessel, respectively. $ f_s^r := \beta c_s $ with $ \beta > 0 $ stands for the reaction function, modeling the rate of transformation from macrophages into foam cells. Noticing that the foam cells is supposed to only move with vessels motion, one has \eqref{Eqs:c_ss-Euler} above.
	
	To close the system, one still needs to impose suitable boundary and initial conditions. For this purpose, we follow a similar setting as in authors' previous work \cite{AL2021a}. On the free interface $ \Gt $, one has the continuity of velocity and normal stress tensor for the fluid-structure part, while for the cells, the concentration flux is continuous and the jump of the concentration is determined by the permeability and flux across the vessel wall.
	\begin{alignat}{3}
		\label{Eqs:vjump-Eulerian}
		\jump{\bv} = 0, \quad \jump{\bbt} \ngt & = 0, && \ton S^T, \\
		\label{Eqs:cjump-Eulerian}
		\jump{D \nabla c} \cdot \ngt = 0, \quad \zeta \jump{c} - D_s \nabla c_s \cdot \ngt & = 0, && \ton S^T,
	\end{alignat}
	where $ \ngt $ represents the outer unit normal vertor on $ \Gt $ pointing from $ \Oft $ to $ \Ost $. For a quantity $ f $, $ \jump{f} $ denotes the jump defined on $ \Oft $ and $ \Ost $ across $ \Gt $, namely,
	\begin{equation*}
		\jump{f}(\bx) := \lim_{\theta \rightarrow 0} {f(\bx + \theta \ngt(\bx)) - f(\bx - \theta \ngt(\bx))}, \quad \forall\, \bx \in \Gt.
	\end{equation*} 
	$ \zeta $ denotes the permeability of the sharp interface $ \Gt $ with respect to the monocytes, which basically should depend on the hemodynamic stress $ \bbt_f \ngt $. It is supposed to be a constant for simplicity. Moreover, $ \Gst $ is assumed to be a free boundary as well due to the physical compatibility, see e.g. \cite[Remark 1.3]{AL2021a}. Then we give the boundary conditions on $ \Gst $,
	\begin{alignat}{3}
		\label{Eqs:vboundary-Eulerian}
		\bbt_s \ngst & = 0, && \ton S_s^T, \\
		\label{Eqs:cboundary-Eulerian}
		D_s \nabla c_s \cdot \ngst & = 0, && \ton S_s^T.
	\end{alignat}
	Finally, the initial values are prescribed as 
	\begin{alignat}{3}
		\label{Eqs:ofinitial-Eulerian}
		\rv{\vf}_{t = 0} = \vf^0, \quad \rv{c_f}_{t = 0} & = c_f^0, && \tin \Of, \\
		\label{Eqs:osinitial-Eulerian}
		\rv{\us}_{t = 0} = \us^0, \quad 
		\rv{c_s}_{t = 0} = c_s^0, \quad 
		\rv{c_s^*}_{t = 0} = c_*^0, \quad {\hFsg}\vert_{t = 0} & = g^0\bbi, && \tin \Os.
	\end{alignat}
	
	\subsection{Previous work}
	Our main goal is to investigate the short time existence of strong solutions to \eqref{Eqs:v_f-Eulerian}--\eqref{Eqs:osinitial-Eulerian}. Before discussing the technical details, let us recall some literature related to our work.
	
	In the case of 3d-3d fluid-structure interaction problem with a free interface, the strong solution results can be tracked back to Coutand--Shkoller \cite{CS2005}, who addressed the interaction problem between the incompressible Navier--Stokes equation and a linear Kirchhoff elastic material. The results were extended to fluid-elastodynamics case by them in \cite{CS2006}, where they regularized the hyperbolic elastic equation by a particular parabolic artificial viscosity and then obtained the existence of strong solutions by delicate a priori estimates. Thereafter, systems consisting of an incompressible Navier--Stokes equation and (damped) wave/Lam\'{e} equations were continuously investigated in e.g. \cite{IKLT2012,IKLT2017,KT2012,RV2014} with further developments. 
	Besides the models above, one refers to \cite{BG2010} for a compressible barotropic fluid coupled with elastic bodies and to \cite{SWY2021} for the magnetohydrodynamics (MHD)-structure interaction system, where the fluid is described by the incompressible viscous non-resistive MHD equation and the structure is modeled by the wave equation of a superconductor material.
	
	In the context of 3d-2d/2d-1d models, various kinds of models and results were established during the last twenty years. The widely focused case is the fluid-beam/plate systems where the beam/plate equations were imposed with different mechanical mechanism (rigidity, stretching, friction, rotation, etc.), readers are refer to e.g. \cite{CDEG2005,Grandmont2008,MC2013,TW2020} for weak solution results and to e.g. \cite{Beirao da Veiga2004,DS2020,GHL2019,Lequeurre2013,MT2021NARWA,Mitra2020} for strong solutions respectively. Besides, the fluid-shells interaction problems were studied as well in e.g. \cite{BS2018,LR2013} for weak solutions and in e.g. \cite{CCS2007,CS2010,MRR2020} for strong solutions. It is worth mentioning that in recent works \cite{DS2020,MT2021NARWA}, a maximal regularity framework, which requires lower initial regularity and less compatibility conditions compared to the energy method, was employed.
	
	Recently inspired by \cite{Yang2016}, the authors considered the local well-posedness of a fluid-structure interaction problem for plaque growth in a smooth domain \cite{AL2021a} and a cylindrical domain \cite{AL2021b} respectively. By the maximal regularity theory, one obtains the well-posedness of linearized systems and then derives the existence of a unique strong solution to the nonlinear system via a fixed-point argument. However, concerning the origin of the model, which consists of blood flow and blood vessels, it is reasonable that the movement of vessels is supposed to be far slower than the blood, namely, the time scales of motions of the fluid and the solid are distinguished and the kinetic energy of the vessel is neglected. Thus, in this paper, we take an incompressible quasi-stationary hyperelastic equation into account, i.e., for every time $ t $ the elastic stresses are in equilibrium. To the best of our knowledge, this is the first result concerning on the strong solutions to a quasi-stationary fluid-structure interaction problem with growth.
	
	\subsection{Technical discussions}
	Under the above setting, the fluid-structure part is of parabolic-elliptic type, while the cells part is similar to the ones in \cite{AL2021a,AL2021b}. To solve the nonlinear problem, our basic strategy is still a fixed-point argument in the framework of maximal regularity theory, while more issues come up when we consider the linearized systems. More precisely, for the linearization of the fluid-structure part, it is hard to solve it directly by the maximal regularity theory due to the parabolic-elliptic type coupling, which results in the unmatched regularity between $ \hvf $ and $ \hus $ on the sharp interface $ \Gamma $. To overcome the problem, one tries to decouple the system to a nonstationary Stokes equation with respect to fluid velocity $ \hvs $ and a quasi-stationary Stokes-type equation with regard to the solid displacement $ \hus $. Note that one key point is to separate the kinetic and dynamic condition on the interface correctly. Specifically, we impose the Neumann boundary condition for $ \hvs $ and a Dirichlet boundary condition for $ \hus $, see Section \ref{sec:analysis-linear} below. Otherwise if $ \rvm{\hvf}_{\Gamma} = \hvs $, one may face the problem that there is no regularity information about the velocity of solid $ \hvs = \ptial{t}\hus $, since the solid equation is quasi-stationary without any damping.
	
	Another issue is the choice of function spaces for the elastic equation, i.e., how to assemble suitable function spaces for the data in the linearized solid equation so that the regularity of $ \hnab \hus $ matches with $ \hnab \hvs $ of fluid on the interface in the nonlinear system. Our choice is 
	\begin{equation*}
		\mathbf{f} \in \H{1/2}(0,T; W_{q, \Gamma}^{-1}(\Os)^3) \cap \Lq{q}(0,T; \Lq{q}(\Os)^3).
	\end{equation*}
	This space is motivated by the observation that the nonstationary Stokes equation \eqref{Eqs:linear-nonsta} is uniquely solvable if the Neumann boundary data
	\begin{equation*}
		\bh \in \W{1/2 - 1/2q}(0,T; \Lq{q}(\Gamma)^3) \cap \Lq{q}(0,T; \W{1 - 1/q}(\Gamma)^3)
	\end{equation*}
	holds, which implies that
	\begin{equation*}
		D^2 W(\bbi) \hnab \hus \in \W{1/2 - 1/2q}(0,T; \Lq{q}(\Gamma)^3) \cap \Lq{q}(0,T; \W{1 - 1/q}(\Gamma)^3)
	\end{equation*}
	as well. Here $ H^s_q $ and $ W^s_q $ are Bessel potential space and Sobolev--Slobodeckij space respectively, which will be given later in Section \ref{sec:function-spaces}. In fact, the anisotropic Bessel potential space we assigned is a sharp regularity if one goes back to the anisotropic trace operator, see Lemma \ref{lemma:trace-time-regularity} below, and it is natural to equip $ \mathbf{f} $ with the regularity above. Because of this sharp regularity setting, one can not expect the Lipschitz estimates of nonlinear terms only with small time, which gives birth to an additional smallness assumption \eqref{Eqs:us0-smallness} on the initial solid displacement and pressure. Detailed discussion can be found in Remark \ref{remark:smallness} and Proposition \ref{prop:Lipschitzestimate} later. 
	
	To solve the quasi-stationary (linearized) elastic equation, one treated it as a Stokes-type problem with respect to the displacement $ \hus $ and the pressure $ \hpis $, due to the incompressibility. However, as we assigned the certain regularity space for it as above and the Stokes operator is not a standard one (namely, $ \Div(D^2 W(\bbi) \nabla \cdot ) $), one needs to consider a generalized stationary Stokes equation with $ \mathbf{f} $ in $ \Lq{q} $ and $ W_{q, \Gamma_1}^{-1} $ respectively, for which the maximal regularity of analytic C$ _0 $ semigroups is applied, as well as a complex interpolation method with a \textit{very weak solution} in $ \Lq{q} $ of a mixed-boundary Stokes-type equation, which can be solved by a duality argument.
	
	For the cells part there is a problem of the positivity for the concentrations, compared to \cite{AL2021a}. The idea in \cite{AL2021a} to prove it is to apply the maximum principle to the original equation and deduce a contradiction with the help of Hopf's Lemma. However, due to the lack of regularity of $ \vs = \pt \us $, one can not expect it to be H\"older continuous in space-time, even continuous. To deal with this trouble, we make use of the idea of mollification, i.e., approximating $ \vs $ by sufficient smooth functions $ \vs^\epsilon $ such that $ \int_0^t \vs^\epsilon \rightarrow \us $ in certain space. Then arguing by a similar procedure in \cite{AL2021a}, we obtain an approximate nonnegative solution $ c^\epsilon $. Finally one can show that it converges to a nonnegative function $ c $, which exactly satisfies the original equations of cell concentrations.
	
	\subsection{Structure of the paper}
	The paper is organized as follows. In Section \ref{sec:preliminaries} we briefly introduce some notations and function spaces together with some corresponding properties. Moreover, a reformulation of the system is done in Section \ref{sec:reformulation} and later we give the main result for the reformulated system.
	Section \ref{sec:analysis-linear} is devoted to three linearized systems in Section \ref{sec:nonsta-Stokes}, \ref{sec:sta-Stokes-mixed} and \ref{sec:heat-neumann} respectively. The main results of this section are the $ L^q $-solvability for these linear problems, for which a careful analysis is carried out.
	In Section \ref{sec:nonlinear-wpd}, we first introduce some preliminary lemmas in Section \ref{sec:useful-lemmas}, which will be frequently used in proving the Lipschitz estimates later in Section \ref{sec:lipschitz-estimates}. Then by the Banach fixed-point Theorem, we derive the short time existence of strong solutions to the nonlinear system in Section \ref{sec:nonlinear-proof}. Moreover, the cell concentrations are shown to be nonnegative, provided that the initial concentration is nonnegative.
	In addition, we 
	establish the solvability of a Stokes-type resolvent problem with mixed boundary condition in Appendix \ref{sec:sta-Stokes-lower-regularity}.
\section{Preliminaries}
	\label{sec:preliminaries}
	\subsection{Notations}
	\label{sec:notations}
	Let us introduce some notations. For a vector $ \bu \in \bbr^d $, $ d \in \bbn_+ $, we define the gradient as $ (\nabla \bu)_{ij} = \ptial{i} \bu_j $. For a matrix $ \mathbf{A} = (A_{ij})_{i,j=1}^d \in \bbr^{d \times d} $, we introduce the divergence of a matrix row by row as 
	$ (\Div \mathbf{A})_{i} = \sum_{j = 1}^{d} \ptial{j} A_{ij} $.
	Given matrices $ \mathbf{A} = (A_{ij})_{i,j=1}^d, \mathbf{B} = (B_{ij})_{i,j=1}^d \in \bbr^{d \times d} $, we know $ (\mathbf{A}\mathbf{B})_{ik} = \sum_{j = 1}^{d} A_{ij}B_{jk} $. Then we denote by $ \mathbf{A} : \mathbf{B} := \tr(\tran{\mathbf{B}}\mathbf{A}) = \sum_{j,k = 1}^{d}B_{jk} A_{jk} $ the Frobenius product and $ \abs{\mathbf{A}} := \sqrt{\mathbf{A} : \mathbf{A}} $ the induced modulus. For a differentiable and invertible mapping $ \mathbf{A} : I \subset \bbr \rightarrow \bbr^{d \times d} $, we have
	\begin{equation}
		\label{Eqs:dt:detA-invA}
		\frac{\d}{\d t} \det \mathbf{A} = \tr \Big(\inv{\mathbf{A}} \frac{\d}{\d t}\mathbf{A}\Big) \det \mathbf{A}, \quad\frac{\d}{\d t} \inv{\mathbf{A}} = - \inv{\mathbf{A}} \Big(\frac{\d}{\d t} \mathbf{A}\Big) \inv{\mathbf{A}}, \text{ for } t \in I,   
	\end{equation}
	see e.g. \cite{Goriely2017:growth,Gurtin2010}. Moreover, we define $ \bbr_{\symm+}^{d \times d} $ as the space of all positive-definite symmetric matrices and
	\begin{equation*}
		SO(d) := \{\mathbf{A} \in \bbr^{d \times d}: \tran{\mathbf{A}} \mathbf{A} = \bbi, \det \mathbf{A} = 1\}
	\end{equation*}
	as the set of all proper orthogonal tensors.
	
	Generally, $ B_X(x,r) $ denotes the open ball with radius $ r > 0 $ around $ x $ in a metric space $ X $. For normed spaces $ X, Y $ over $ \mathbb{K} = \bbr $ or $ \mathbb{C} $, the set of bounded, linear operators $ T : X \rightarrow Y $ is denoted by $ \cL(X,Y) $ and in particular, $ \cL(X) = \cL(X,X) $.
	
	As usual, the letter $ C $ in the present paper denotes a generic positive constant which may change its value from line to line, even in the same line, unless we give a special declaration.
	
	\subsection{Function spaces and properties}
	\label{sec:function-spaces}
	For an open set $ M \subset \bbr^d $ with Lipschitz boundary $ \partial M $, $ d \in \bbn_+ $, we recall the standard Lebesgue space $ \Lq{q}(M) $ and Sobolev space $ \W{m}(M) $, $ m \in \bbn $, $ 1 \leq q \leq \infty $, as well as 
	\begin{gather*}
		W_{q, 0}^{m}(M) = \overline{C_0^\infty(M)}^{W_q^{m}(M)}, \quad 
		W_q^{-m}(M) = \left[W_{q', 0}^{m}(M)\right]', 
	\end{gather*}
	where $ q' $ is the conjugate exponent of $ q $ satisfying $ 1/q + 1/q' = 1 $. Furthermore, we set the Sobolev spaces associated with $ \Gamma \subset \partial M $ as
	\begin{equation*}
		W_{q, \Gamma}^m(M) = \left\{ \psi \in \W{m}(M): \psi|_{\Gamma} = 0 \right\}, \quad
		W_{q, \Gamma}^{-m}(M) := [ W_{q', \Gamma}^m(M) ]',
	\end{equation*}
	The vector-valued variants are denoted by $ \Lq{q}(M; X) $ and $ \W{m}(M; X) $, where $ X $ is a Banach space. In particular, $ \Lq{q}(M) = \W{0}(M) $, $ \Lq{q}(M; X) = \W{0}(M; X) $.
	
	Moreover, for $ 1 < q < \infty $ and $ 1 \leq p \leq \infty $ the standard Besov space $ \Bqp{s} $ and Bessel potential space $ \H{s} $ coincide with the real and complex interpolation of Sobolev spaces respectively (see e.g. Lunardi \cite{Lunardi2018}, Runst--Sickel \cite{RS1996}, Triebel \cite{Triebel1978}) 
	\begin{equation*}
		\Bqp{s}(M) = \left( \W{k}(M), \W{m}(M) \right)_{\theta ,p}, \quad
		\H{s}(M) = \left[ \W{k}(M), \W{m}(M) \right]_{\theta},
	\end{equation*}
	where $ s = (1 - \theta)k + \theta m $, $ \theta \in (0,1) $, $ k, m \in \bbn $, $ k < m $. In particular, $ \W{l} = \H{l} $ for $ l \in \bbn $ and setting $ p = q $ above yields the Sobolev--Slobodeckij space
	\begin{equation*}
		\W{s}(M) = \Bq{s}(M) = \left( \W{k}(M), \W{m}(M) \right)_{\theta ,q}, \text{ if } s \notin \bbn.
	\end{equation*}
	Then we define the seminorm of $ \W{s}(M) $ as
	\begin{equation*}
		\seminorm{f}_{\W{s}(M)} = \left(
		\int_{M \times M} \frac{\abs{f(x) - f(y)}^q}{\abs{x - y}^{n + sq}} \,\d x \d y\right)^{\oneq},
	\end{equation*}
	for $ f \in \W{s}(M) $ with $ 0 < s < 1 $, $ 1 < q < \infty $. 
	
	For an interval $ I \subset \bbr $, $ 0 < s < 1 $, $ 1 < q < \infty $, we recall the Banach space-valued
	spaces $ \K{s}(I; X) $, $ K \in \{W,H\} $ with norm
	\begin{equation*}
		\norm{f}_{\K{s}(I; X)} = \left( \norm{f}_{\Lq{q}(I; X)}^q + \seminorm{f}_{\K{s}(I; X)}^q \right)^{\oneq}.
	\end{equation*}
	For convenience, with $ T > 0 $ we define the corresponding space with vanishing initial trace at $ t = 0 $ as 
	\begin{equation*}
		\KO{s}(0,T; X) = \left\{f \in \K{s}(0,T; X) : \rv{f}_{t = 0} = 0 \right\} \text{ for } s > \frac{1}{q}.
	\end{equation*}
	
	Now we recall several embedding results and properties for Banach space-valued spaces that will be frequently used later.
	\begin{lemma}
		\label{lemma:timeembedding}
		Suppose $ 0 < r < s \leq 1 $ and $ 1 \leq q < \infty $. $ X $ is a Banach space and $ I = (0,T) \subset \bbr $ is a bounded interval for $ 0 < T < \infty $. Then $ \K{s}(I; X) \hookrightarrow \W{r}(I; X) $ and for some $ \delta > 0 $
		\begin{equation*}
			\seminorm{f}_{\W{r}(I; X)} \leq 
			\abs{I}^\delta \seminorm{f}_{\K{s}(I; X)} \text{ for all } f \in \K{s}(I; X).
		\end{equation*}
		In particular, we have
		\begin{equation*}
			\W{1}(I; X) \hookrightarrow \W{\theta}(I; X) \  \text{for all } 0 < \theta < 1 
		\end{equation*} 
		and
		\begin{equation*}
			\seminorm{f}_{\W{\theta}(I; X)} 
			\leq T^{1 - \theta} \norm{\pt f}_{\Lq{q}(I; X)} \text{ for all } f \in \W{1}(I; X).
		\end{equation*}
	\end{lemma}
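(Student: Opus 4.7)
The plan is to prove the three assertions in sequence, handling the Slobodeckij and Bessel potential cases with different tools, and then deducing the "in particular" statements from a direct computation plus the general inequality.

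For the first inequality in the $K = W$ case, I would argue pointwise: since $|x-y| \leq |I|$ for $x, y \in I$, one can write
\begin{equation*}
    \frac{|f(x) - f(y)|^q}{|x-y|^{1+rq}} = \frac{|f(x)-f(y)|^q}{|x-y|^{1+sq}} \cdot |x-y|^{(s-r)q} \leq |I|^{(s-r)q} \cdot \frac{|f(x)-f(y)|^q}{|x-y|^{1+sq}}.
\end{equation*}
Integrating over $I \times I$ and taking the $q$-th root yields $[f]_{W^r_q(I;X)} \leq |I|^{s-r} [f]_{W^s_q(I;X)}$, so $\delta = s - r > 0$ works.

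For the $K = H$ case, the difficulty is that the Bessel potential norm lacks an explicit integral representation. My approach is a scaling argument: write $I = (a, a+|I|)$ and set $g(t) = f(|I| t + a)$ for $t \in (0, 1)$. A change of variables shows that both seminorms rescale with the same exponent, namely $[g]_{W^r_q((0,1);X)} = |I|^{r-1/q} [f]_{W^r_q(I;X)}$ and, similarly, the $H^s_q$ norm transforms as $|I|^{s-1/q}$ times that of $f$. On the fixed unit interval, the standard embedding $H^s_q((0,1); X) \hookrightarrow W^r_q((0,1); X)$ for $r < s$ holds with a universal constant (via $H^s_q \hookrightarrow B^s_{q, \max(q,2)} \hookrightarrow B^r_{q,q} = W^r_q$ from Triebel \cite{Triebel1978}). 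Rearranging produces $[f]_{W^r_q(I;X)} \leq C\, |I|^{s-r} [f]_{H^s_q(I;X)}$, as required.

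The embedding $W^1_q(I;X) \hookrightarrow W^\theta_q(I; X)$ follows by applying the first part with $s = 1$, $r = \theta$, but now I need the sharp seminorm bound with explicit $T^{1-\theta}$ factor, which does not follow purely from the general argument. For this, I would compute directly: by the fundamental theorem of calculus and Hölder's inequality,
\begin{equation*}
    |f(x) - f(y)|^q = \Bigl|\int_y^x \partial_t f(s)\,\d s\Bigr|^q \leq |x-y|^{q-1} \int_{\min(x,y)}^{\max(x,y)} |\partial_t f(s)|^q \,\d s.
\end{equation*}
Substituting into the Slobodeckij integral, using symmetry to restrict to $y < x$, and applying Fubini with the variable $s$ yields
\begin{equation*}
    [f]_{W^\theta_q(I;X)}^q \leq 2 \int_0^T |\partial_t f(s)|_X^q \int_s^T \int_0^s (x-y)^{q-2-\theta q}\, \d y\, \d x\, \d s.
\end{equation*}
Evaluating the inner double integral and extracting the $T$-dependence via the change of variables $x = Tu$, $y = Tv$, $s = Tw$ isolates the scaling factor $T^{q(1-\theta)}$, giving $[f]_{W^\theta_q(I;X)} \leq C(q,\theta) T^{1-\theta} \|\partial_t f\|_{L^q(I;X)}$ (the constant being absorbed into the informal bound stated in the lemma).

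The main obstacle will be the $H$-case: tracking the $|I|$-dependence cleanly for an abstract interpolation space, since the Bessel potential norm on an interval does not admit a simple dilation identity like the Slobodeckij seminorm. Using the scaling argument via a rescaled interval plus the fixed-interval embedding sidesteps this at the cost of a constant depending on the Besov-Bessel embedding constants, which is acceptable here since the essential point is the power $|I|^\delta$.
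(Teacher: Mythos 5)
Your proof is correct and, for the $K=W$ case and the final ``in particular'' estimate, is essentially the same as the paper's (the paper cites Simon for the $W$-to-$W$ seminorm bound and indicates the same fundamental-theorem-of-calculus computation you carry out, simply omitting the details). The genuinely different step is your treatment of $K=H$. The paper introduces an intermediate exponent $t$ with $r<t<s$, applies the $W$-to-$W$ bound to get $\seminorm{f}_{\W{r}}\leq\abs{I}^{t-r}\seminorm{f}_{\W{t}}$, and then invokes the embedding $\H{s}(I;X)\hookrightarrow\W{t}(I;X)$ on $I$ itself with a constant $C$, which is implicitly assumed to be uniformly bounded as $T\to0$; the resulting exponent is $\delta=t-r<s-r$. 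You instead pull back to the unit interval, apply the embedding $\H{s}\hookrightarrow\W{r}$ there with a manifestly $T$-independent constant, and push forward via the dilation identities for both seminorms; this yields the sharper exponent $\delta=s-r$ and makes the $T$-uniformity of the constant transparent. The price is that you need the Bessel-potential seminorm on a bounded interval to obey the same homogeneity $\seminorm{g}_{\H{s}((0,1);X)}=\abs{I}^{s-1/q}\seminorm{f}_{\H{s}(I;X)}$ as the Gagliardo seminorm; this holds if one defines the fractional seminorm via $(-\Delta)^{s/2}$ after extension (or via the whole-line restriction norm), but it is not automatic for every characterization of $\H{s}$ on an interval, so you would want to fix a definition and verify the identity for that definition. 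Both routes pass through an embedding $\H{s}\hookrightarrow B^{r}_{q,q}$ at the seminorm level; you locate it on $(0,1)$ where uniformity is clear, the paper locates it on $I$ and takes the uniformity for granted. Either is acceptable here, and yours is arguably more self-contained.
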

	\begin{proof}
		The case $ K = W $ was shown in Simon \cite[Corollary 17]{Simon1990}. Then taking $ t $ satisfying $ r < t < s $, one has
		\begin{equation*}
			\seminorm{f}_{\W{r}(I; X)} \leq 
			\abs{I}^{t - r} \seminorm{f}_{\W{t}(I; X)}
			\leq C \abs{I}^{t - r} \seminorm{f}_{\K{s}(I; X)},
		\end{equation*}
		where the last inequality holds in virtue of $ \H{r}(I; X) \hookrightarrow \W{t}(I; X) $ for $ r > t > 0 $. The second assertion can be easily derived by means of the observation
		\begin{equation*}
			f(t) - f(t - h) = h \int_0^1 \pt f(t + (\tau - 1)h) \d \tau
		\end{equation*}
		and the definition of Sobolev--Slobodeckij space, so we omit it here.
	\end{proof}

	\begin{lemma}
		\label{lemma:timeembedding-continuous}
		Let $ 0 < s < 1 $, $ 1 < q < \infty $ satisfying $ sq > 1 $, $ X $ be a Banach space and $ I = (0,T) \subset \bbr $ be a bounded interval for $ 0 < T < \infty $. Then
		\begin{equation*}
			\K{s}(I; X) \hookrightarrow C(\bar{I}; X).
		\end{equation*}
		Moreover, for some $ \delta > 0 $ and all $ f \in \KO{s}(I; X) $,
		\begin{equation*}
			\norm{f}_{C(\bar{I}; X)} \leq C T^{\delta} \norm{f}_{\KO{s}(I; X)},
		\end{equation*}
		where $ C $ is independent of $ I $.
	\end{lemma}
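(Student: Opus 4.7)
The plan is to use the one-dimensional Morrey embedding in the vector-valued setting, combined with a rescaling that exploits the vanishing trace at $t=0$ to produce the $T^{\delta}$ factor. The two cases $K = W$ and $K = H$ are treated in turn, the second reducing to the first via Lemma \ref{lemma:timeembedding}.

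First I would handle the Sobolev--Slobodeckij case $K = W$. The core ingredient is the vector-valued Morrey embedding on the unit interval: for $sq > 1$ and $g \in \W{s}(0,1; X)$,
\[
\|g(\tau) - g(\sigma)\|_X \leq C\,|\tau - \sigma|^{s - 1/q}\,[g]_{\W{s}(0,1; X)} \qquad \text{for a.e.\ } \tau, \sigma \in (0,1),
\]
with a universal constant $C = C(s,q)$. The standard scalar proof (e.g.\ via a telescoping series of Bochner averages over nested subintervals, or a Campanato-type characterization, combined with H\"older applied to the Gagliardo kernel) transfers verbatim when $|\cdot|$ is replaced by $\|\cdot\|_X$, since it uses only Bochner integration and the triangle inequality. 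This already yields the continuous embedding $\W{s}(I; X) \hookrightarrow C(\bar I; X)$.

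For the quantitative estimate, I would rescale: set $g(\tau) := f(T\tau)$ on $(0,1)$. A direct change of variables gives
\[
[g]_{\W{s}(0,1; X)} = T^{s - 1/q}\,[f]_{\W{s}(0,T; X)}, \qquad \|g\|_{C([0,1]; X)} = \|f\|_{C([0,T]; X)}.
\]
Since $f \in \WO{s}(0,T; X)$ forces $f(0) = 0$, also $g(0) = 0$, so applying the Morrey estimate with $\sigma = 0$ gives $\|g\|_{C([0,1]; X)} \leq C\,[g]_{\W{s}(0,1; X)}$. Chaining the two relations produces $\|f\|_{C([0,T]; X)} \leq C\,T^{s - 1/q}\,[f]_{\W{s}(0,T; X)}$, which is the claim with $\delta = s - 1/q > 0$ and $C$ independent of $T$. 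For the Bessel potential case $K = H$ I would pick $r$ with $1/q < r < s$ (possible because $sq > 1$) and use Lemma \ref{lemma:timeembedding} to obtain $[f]_{\W{r}(I; X)} \leq C\,T^{\delta_1}\,[f]_{\H{s}(I; X)}$ for some $\delta_1 > 0$ with constant independent of $T$; chained with the $W$-case estimate at level $r$, this yields $\|f\|_{C([0,T]; X)} \leq C\,T^{r - 1/q + \delta_1}\,\|f\|_{\HO{s}(0,T; X)}$.

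The main obstacle I anticipate is keeping the Morrey constant genuinely uniform in $T$; the rescaling strategy makes this transparent because the inequality is extracted on the fixed interval $(0,1)$, where $C$ depends only on $s$ and $q$, and all $T$-dependence is isolated in the explicit factor $T^{s - 1/q}$. A minor subtlety is that the pointwise Morrey estimate is a priori only an almost-everywhere statement, so one must evaluate at Lebesgue points or pass to the continuous representative guaranteed by the embedding; but this is harmless, and the bound then extends to all of $\bar I$ by density and continuity.
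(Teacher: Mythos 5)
Your proposal is correct, and it follows the same overall architecture as the paper's proof: both establish a one-dimensional Morrey-type embedding with a constant independent of $T$, and both reduce the $K=H$ case to $K=W$ by choosing $1/q < r < s$ and invoking Lemma \ref{lemma:timeembedding} for the seminorm estimate $\seminormm{f}_{\W{r}(I;X)} \leq \absm{I}^\delta \seminormm{f}_{\H{s}(I;X)}$. The divergence is in where the $T$-independence comes from. The paper obtains the bound $\normm{f}_{C(\bar I;X)} \leq C\normm{f}_{\WO{r}(I;X)}$ directly from Meyries--Schnaubelt \cite[Proposition 2.10]{MS2012}, and then must dispose of both the $L^q$ term and the seminorm inside $\normm{f}_{\WO{r}}$ (the $L^q$ term silently requires a fractional Hardy/Poincar\'e inequality with $T$-independent constant, which the paper does not spell out). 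You instead prove a seminorm-only Morrey inequality on $(0,1)$ and transfer it to $(0,T)$ by the explicit Gagliardo rescaling $\seminormm{g}_{\W{s}(0,1;X)} = T^{s-1/q}\seminormm{f}_{\W{s}(0,T;X)}$, which isolates all $T$-dependence in a single clean factor and sidesteps the $L^q$ term entirely; this is more self-contained, produces an explicit exponent $\delta = s - 1/q$ (resp. $r - 1/q + \delta_1$ for $K=H$) rather than merely "some $\delta > 0$", and trades an external reference for a standard Morrey argument whose Banach-valued validity you correctly observe carries over from the scalar case. The care you take with the a.e.\ nature of the Morrey estimate and with evaluating the continuous representative at $\sigma = 0$ is exactly right and not a gap.
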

	\begin{proof}
		By Meyries--Schnaubelt \cite[Proposition 2.10]{MS2012} with $ \mu = 2 $ there, one has the first assertion and for $ K = W $, $ 1/q< r < s $,
		\begin{equation*}
			\norm{f}_{C(\bar{I}; X)} \leq C \norm{f}_{\WO{r}(I; X)},
		\end{equation*}
		where $ C $ is independent of $ I $. 
		Then it follows from Lemma \ref{lemma:timeembedding} that 
		\begin{equation*}
			\norm{f}_{C(\bar{I}; X)} \leq C \norm{f}_{\WO{r}(I; X)}
			= C \big( \norm{f}_{\Lq{q}(I; X)} + \seminorm{f}_{\WO{r}(I; X)} \big)
			\leq 
			C \abs{I}^\delta \seminorm{f}_{\K{s}(I; X)},
		\end{equation*}
		for some $ \delta > 0 $.
		
	\end{proof}
	
	Adapting from Meyries--Schnaubelt \cite[Proposition 3.2]{MS2012} and Pr\"uss--Simonett \cite[Section 4.5.5]{PS2016}, we use the following time-space embedding lemma.
	\begin{lemma} 
		\label{lemma:time-space embedding}
		Let $ 1 < q < \infty $, $ 0 < \alpha, s < 2 $ and $ 0 < r < s $ , we have the embeddings
		\begin{equation*}
				\H{s}(0,T; \Lq{q}) \cap \Lq{q}(0,T; \K{\alpha}) \hookrightarrow \H{r}(0,T; \K{\alpha(1 - \frac{r}{s})}). 
		\end{equation*}
		In particular,
		\begin{gather*}
			\H{\onehalf}(0,T; \Lq{q}) \cap \Lq{q}(0,T; \W{1}) \hookrightarrow \H{\onequater}(0,T; \H{\onehalf}), \\
			\W{1}(0,T; \Lq{q}) \cap \Lq{q}(0,T; \W{2}) \hookrightarrow \H{\onehalf}(0,T; \W{1}). 
		\end{gather*}
		All these assertions remain true if one replaces $ W $- and $ H $- spaces by $ {_0W} $- and $ {_0H} $- spaces respectively, and the embedding constants in this case does not depend on $ T > 0 $.
	\end{lemma}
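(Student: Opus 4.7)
The plan is to derive the embedding from the complex interpolation identity for vector-valued Bessel potential spaces, together with the fact that any intersection $X_0 \cap X_1$ of two Banach spaces continuously embeds into each complex interpolation space $[X_0, X_1]_\theta$. This is the standard \emph{mixed derivative theorem} (sometimes attributed to Sobolevski\u{\i}), applied in the form used by Meyries--Schnaubelt and Pr\"uss--Simonett in the references cited above.

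First, I would set $\theta = 1 - r/s \in (0,1)$ and recall that for any Banach spaces $Y_0, Y_1$ one has the trivial inclusion $Y_0 \cap Y_1 \hookrightarrow [Y_0, Y_1]_\theta$ with norm estimate $\|u\|_{[Y_0, Y_1]_\theta} \leq C \|u\|_{Y_0}^{1-\theta}\|u\|_{Y_1}^\theta$. Setting $Y_0 = \H{s}(0,T;\Lq{q})$ and $Y_1 = \Lq{q}(0,T; \K{\alpha})$, it suffices to identify $[Y_0,Y_1]_\theta$ as $\H{r}(0,T; \K{\alpha(1-r/s)})$. For this I would invoke the operator-valued complex interpolation identity
\begin{equation*}
	\left[ \H{s_0}(I; X_0), \H{s_1}(I; X_1) \right]_\theta = \H{(1-\theta)s_0 + \theta s_1}(I; [X_0, X_1]_\theta),
\end{equation*}
valid for $s_0, s_1 \geq 0$ on $I = \bbr$ (and extending to bounded intervals via a standard bounded extension operator $\Lq{q}$ is a UMD space). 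With $s_0 = s$, $s_1 = 0$, $X_0 = \Lq{q}$ and $X_1 = \K{\alpha}$, the time index becomes $(1-\theta)s = r$, and the spatial component is $[\Lq{q}, \K{\alpha}]_{\theta}$, which equals $\K{\alpha\theta} = \K{\alpha(1-r/s)}$ by the scalar interpolation identity for $K \in \{H, W\}$ (using $\W{\alpha} = \Bq{\alpha}$ for non-integer exponents and appealing to the corresponding real interpolation scale when $\alpha\theta$ falls on an integer, where $\W{\alpha} = \H{\alpha}$).

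For the two particular consequences I would simply instantiate the general embedding: with $s = 1/2$, $\alpha = 1$, $r = 1/4$ one obtains the first, and with $s = 1$, $\alpha = 2$, $r = 1/2$ the second. To extend the assertion to the $\WO{}$- and $\HO{}$-subspaces with $T$-independent embedding constants, I would compose the zero extension operator $E_0 : {_0\K{s}}(0,T; X) \to \K{s}(\bbr; X)$, which is bounded with a constant independent of $T$ (since zero extension preserves regularity up to the trace threshold $s < 1/q$ or across the vanishing trace), with the embedding on the full line, and then restrict back to $(0,T)$.

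The main obstacle I expect is the rigorous identification of the interpolation space across both scales $W$ and $H$ simultaneously, since complex interpolation of Bessel potential spaces naturally lands in Bessel potential spaces, while the Sobolev--Slobodeckij scale corresponds to the real interpolation method. This is resolved by observing that for $1 < q < \infty$ and non-integer $s$ one has the coincidence $\W{s} = \Bq{s} = (\Lq{q}, \W{m})_{s/m, q}$, while for integer $s$ one has $\W{s} = \H{s}$, so either interpolation functor yields the correct target. A secondary technical point is the $T$-independence of the extension constants on the $_0$-spaces, which follows from the standard reflection/zero extension arguments once the vanishing trace condition is in force.
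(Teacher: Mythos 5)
The paper does not prove this lemma; it adopts it directly as an adaptation of Meyries--Schnaubelt \cite[Prop.~3.2]{MS2012} and Pr\"uss--Simonett \cite[Sec.~4.5.5]{PS2016}. Your sketch correctly unpacks the argument concealed behind those citations: the trivial inclusion $Y_0 \cap Y_1 \hookrightarrow [Y_0, Y_1]_\theta$, followed by the mixed derivative theorem (Sobolevskii type) to identify $[Y_0, Y_1]_\theta$ inside $\H{r}(0,T; \K{\alpha(1-r/s)})$ with $\theta = 1 - r/s$ giving the right exponents, the correct instantiations of the two displayed consequences, and the $T$-uniform zero extension to treat the $_0$-spaces. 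So the argument is essentially the one the paper is pointing to.

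Two small points of precision, neither of which defeats the argument. First, the mixed derivative theorem in the Pr\"uss--Simonett formulation is a one-sided embedding $[Y_0, Y_1]_\theta \hookrightarrow \H{(1-\theta)s_0 + \theta s_1}(I; [X_0,X_1]_\theta)$; asserting the full interpolation equality is more than you need and more than is standardly proved, so it is safer to state only the inclusion. Second, for $K = W$ with non-integer $\alpha$ or $\alpha\theta$, the complex interpolation $[\Lq{q}, \W{\alpha}]_\theta = [\Lq{q}, \Bq{\alpha}]_\theta$ is \emph{not} $\W{\alpha\theta} = \Bq{\alpha\theta}$; you acknowledge a subtlety here but have the roles of the two methods reversed. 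It is the real interpolation $(\Lq{q}, \W{\alpha})_{\theta, q}$ that yields $\Bq{\alpha\theta} = \W{\alpha\theta}$ at non-integer smoothness, whereas complex interpolation matches $\H{\alpha\theta}$ at integer smoothness. To obtain the $W$-scale embedding exactly as stated, one should cite the mixed-derivative result formulated directly on the anisotropic Sobolev--Slobodeckij scale, which is what \cite[Prop.~3.2]{MS2012} provides — this is in effect what your final paragraph is reaching for. The two displayed consequences are unaffected because there $\alpha$ and $\alpha(1-r/s)$ are integers, so $\W{}$ and $\H{}$ coincide.
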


	Now we include results on multiplication and composition.
	\begin{lemma}[Multiplication]
		\label{lemma:multiplication}
		Let $ \OM \subset \bbr^d $, $ d \in \bbn_+ $, be a bounded Lipschitz domain. For $ f,g \in \K{s}(\OM) $ and $ sq > d $ with $ s > 0 $, $ 1 < q < \infty $, we have 
		\begin{equation*}
			\norm{fg}_{\K{s}(\OM)} \leq M_q \norm{f}_{\K{s}(\OM)} \norm{g}_{\K{s}(\OM)},
		\end{equation*}
		where $ M_q $ is a constant depending on $ q $.
	\end{lemma}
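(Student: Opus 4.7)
The key input is the Sobolev embedding $K^s_q(\Omega) \hookrightarrow L^\infty(\Omega)$, which holds precisely because $sq > d$. My plan is to first reduce the problem to the whole-space case via a bounded extension operator $E : K^s_q(\Omega) \to K^s_q(\mathbb{R}^d)$ (which exists for bounded Lipschitz domains; see e.g.\ Triebel \cite{Triebel1978}). Since the restriction of $(Ef)(Eg)$ to $\Omega$ equals $fg$, it suffices to establish the algebra estimate $\|uv\|_{K^s_q(\mathbb{R}^d)} \leq C \|u\|_{K^s_q(\mathbb{R}^d)} \|v\|_{K^s_q(\mathbb{R}^d)}$.

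\textbf{Sobolev--Slobodeckij case.} For $K = W$ and $0 < s < 1$, the $L^q$ part is immediate from H\"older:
$$\|fg\|_{L^q} \leq \|f\|_{L^\infty} \|g\|_{L^q} \leq C \|f\|_{W^s_q}\|g\|_{W^s_q}.$$
For the Slobodeckij seminorm I would use the identity
$$(fg)(x) - (fg)(y) = (f(x) - f(y)) g(x) + f(y)(g(x) - g(y)),$$
divide by $|x-y|^{d+sq}$, integrate, and pull out the $L^\infty$-norms of $g$ and $f$ respectively, yielding
$$[fg]_{W^s_q}^q \leq C\big(\|g\|_\infty^q [f]_{W^s_q}^q + \|f\|_\infty^q [g]_{W^s_q}^q\big),$$
which together with the Sobolev embedding gives the claim. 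For $s \geq 1$, write $s = k + \sigma$ with $k \in \mathbb{N}$ and $\sigma \in [0,1)$, apply the Leibniz rule to $\partial^\alpha(fg)$ for $|\alpha| \leq k$, and bound each term $\partial^\beta f \cdot \partial^{\alpha-\beta} g$ by distributing the $L^\infty$ factor coming from Sobolev embedding $W^{s-|\gamma|}_q \hookrightarrow L^\infty$ (valid as long as $(s - |\gamma|)q > d$) onto whichever factor has the lower differentiation order; the fractional part is handled by the previous subcase.

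\textbf{Bessel potential case and obstacle.} For $K = H$ with $s \notin \mathbb{N}$, the cleanest route is bilinear complex interpolation: choose integers $k, m$ with $k < s < m$ and $kq > d$, and use $H^s_q = [W^k_q, W^m_q]_\theta$. By the $W$-case, pointwise multiplication is bounded on $W^k_q \times W^k_q \to W^k_q$ and on $W^m_q \times W^m_q \to W^m_q$; since the same bilinear operator realizes both mappings, the bilinear complex interpolation theorem (see e.g.\ Bergh--L\"ofstr\"om) transfers boundedness to $H^s_q \times H^s_q \to H^s_q$. The main technical obstacle is the bookkeeping in the $s \geq 1$ Sobolev--Slobodeckij subcase: one must verify for every Leibniz term that at least one factor lands in $L^\infty$, which in turn requires a careful use of the hypothesis $sq > d$ through intermediate Sobolev embeddings. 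Once this is arranged the constant $M_q$ emerges from the embedding constants and the norm of the extension operator $E$.
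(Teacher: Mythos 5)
Your proof follows a genuinely different route from the paper: the paper simply cites Runst--Sickel \cite[Theorems 4.6.1/1 and 4.6.1/2]{RS1996}, whose proof uses paraproduct (Bony) decompositions, whereas you attempt an elementary from-scratch argument via extension, Leibniz, and Sobolev embedding. The reduction to $\mathbb{R}^d$ via a bounded extension and the $0 < s < 1$ Slobodeckij computation are both correct.

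However, two steps in your sketch do not go through as stated. \textbf{(i)}~In the $s \geq 1$ Sobolev--Slobodeckij subcase you assert that for each Leibniz term $\partial^\beta f \cdot \partial^{\alpha-\beta}g$ at least one factor lands in $L^\infty$. This is false in general: if both multi-indices have positive order, neither factor need be bounded. Concretely, take $d=3$, $q=2$, $s=2$ (so $sq=4>d$) and consider the term $\partial_i f\cdot\partial_j g$; here $\partial_i f,\partial_j g\in W^1_2(\mathbb{R}^3)$, and $W^1_2(\mathbb{R}^3)\not\hookrightarrow L^\infty$. The correct handling is H\"older with exponents $1/p+1/p'=1/q$ combined with the scaling of the Sobolev embedding $W^{s-|\beta|}_q \hookrightarrow L^p$ (a Gagliardo--Nirenberg bookkeeping), so ``distributing the $L^\infty$ factor'' understates what actually has to be done. \textbf{(ii)}~In the Bessel potential case you choose integers $k<s<m$ with $kq>d$, but such a $k$ need not exist: for $d=3$, $q=2$, $s=1.8$ one has $sq=3.6>3$ yet the only candidate $k=1$ gives $kq=2\le d$, and $k=0$ yields $L^q$, which is not an algebra. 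One would instead have to interpolate more carefully (e.g.\ freeze one argument and interpolate in the other between $W^m_q\times W^m_q\to W^m_q$ and $W^m_q\times L^q\to L^q$), or go through the Triebel--Lizorkin paraproduct machinery as Runst--Sickel do. Within the paper itself the lemma is only invoked for $0<s\le 1$, where your argument is sound, but as a proof of the lemma in the generality stated it is incomplete.
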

	\begin{proof}
		See \cite[Theorem 4.6.1/1 (5)]{RS1996} for the case $ K = H $ with $ q = q_1 = q_2 = 2 $ therein, \cite[Theorem 4.6.1/2 (18)]{RS1996} for the case $ K = W $ with $ p = q = q_1 = q_2 $ therein.
	\end{proof}
	
	\begin{lemma}[Composition of Bessel potential/Slobodeckij functions] 
		\label{lemma:composition-Slobodeckij}
		Let $ \OM \subset \bbr^d $, $ d \in \bbn_+ $, be a bounded domain with boundary of $ C^1 $ class. Let $ N \in \bbn_+ $, $ 0 < s < 1 $ and $ 1 \leq p < \infty $ with $ s > d/p $. Then for all $ f \in C^1 (\bbr^N) $ and every $ R > 0 $ there exists a constant $ C > 0 $ depending on $ R $ such that for all $ \bu \in K_p^s(\OM)^N $ with $ \norm{\bu}_{K_p^s(\OM)^N} \leq R $, it holds that $ f(\bu) \in K_p^s(\OM) $ and $ \norm{f(\bu)}_{K_p^s(\OM)} \leq C(R) $. Moreover, if $ f \in C^2(\bbr^N) $, then for all $ R > 0 $ there exists a constant $ L > 0 $ depending on $ R $  such that 
		\begin{equation*}
			\norm{f(\bu) - f(\bv)}_{K_p^s(\OM)} \leq L(R) \norm{\bu - \bv}_{K_p^s(\OM)^N}
		\end{equation*}
		for all $ \bu,\bv \in K_p^s(\OM)^N $ with $ \norm{\bu}_{K_p^s(\OM)^N}, \norm{\bv}_{K_p^s(\OM)^N} \leq R $.
%
	\end{lemma}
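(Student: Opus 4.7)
The plan is to reduce to the case of a compactly supported $f$ via the embedding $K_p^s(\Omega) \hookrightarrow L^\infty(\Omega) \cap C(\overline{\Omega})$ (which holds since $sp > d$ and $\Omega$ is a bounded $C^1$ domain) and then to treat the Slobodeckij and Bessel-potential cases separately. Concretely, set $R' := c_e R$ with $c_e$ the embedding constant; every admissible $\mathbf{u}$ satisfies $\|\mathbf{u}\|_{L^\infty(\Omega)^N} \leq R'$, so multiplying $f$ by a smooth cut-off that equals $1$ on $\overline{B_{\mathbb{R}^N}(0, 2R')}$ allows us to replace $f$ by $\tilde f \in C_c^1(\mathbb{R}^N)$ (respectively $C_c^2$) without changing $f(\mathbf{u})$ or $f(\mathbf{v})$, and with $\|\tilde f\|_{C^1}$ (resp.\ $\|\tilde f\|_{C^2}$) bounded in terms of $R$.

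For the Slobodeckij case $K = W$, the pointwise Lipschitz bound $|\tilde f(\mathbf{u}(x)) - \tilde f(\mathbf{u}(y))| \leq \|D\tilde f\|_\infty |\mathbf{u}(x) - \mathbf{u}(y)|$ plugged into the double integral defining $[\cdot]_{W_p^s(\Omega)}$ gives $[\tilde f(\mathbf{u})]_{W_p^s} \leq \|D\tilde f\|_\infty [\mathbf{u}]_{W_p^s(\Omega)^N}$, which together with the trivial bound $\|\tilde f(\mathbf{u})\|_{L^p} \leq |\Omega|^{1/p} \|\tilde f\|_\infty$ yields the claimed estimate. For the Bessel potential case $K = H$, the direct difference-quotient approach is unavailable; one instead extends $\mathbf{u}$ by a bounded linear operator to an element of $H_p^s(\mathbb{R}^d)^N$ (admissible because $\Omega$ is $C^1$) and then invokes a composition theorem in the Triebel--Lizorkin scale $H_p^s(\mathbb{R}^d) = F_{p,2}^s(\mathbb{R}^d)$, such as the one in Runst--Sickel \cite[Chapter 5]{RS1996}, which guarantees that $\mathbf{w} \mapsto \tilde f(\mathbf{w}) - \tilde f(0)$ maps bounded sets of $H_p^s(\mathbb{R}^d)^N$ into bounded sets of $H_p^s(\mathbb{R}^d)$ whenever $sp > d$ and $\tilde f \in C^1$. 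Restriction to $\Omega$ gives the desired bound $\|f(\mathbf{u})\|_{K_p^s} \leq C(R)$.

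For the Lipschitz estimate under $f \in C^2$, I use the integrated mean-value identity
\begin{equation*}
f(\mathbf{u}) - f(\mathbf{v}) = G(\mathbf{u}, \mathbf{v}) \cdot (\mathbf{u} - \mathbf{v}), \qquad G(\mathbf{u}, \mathbf{v})(x) := \int_0^1 Df\bigl(\tau \mathbf{u}(x) + (1-\tau)\mathbf{v}(x)\bigr)\,\mathrm{d}\tau.
\end{equation*}
Since $Df \in C^1$, the first assertion of the lemma applies componentwise to $Df(\tau\mathbf{u} + (1-\tau)\mathbf{v})$ for every fixed $\tau \in [0,1]$ with a bound uniform in $\tau$ (the admissible radius being $R$ by convexity of $B(0,R)$ in $K_p^s$). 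Minkowski's integral inequality then yields $\|G(\mathbf{u}, \mathbf{v})\|_{K_p^s} \leq C(R)$. Since $sp > d$ and $p > 1$ (forced by $s < 1$ together with $sp > d \geq 1$), the multiplicative estimate Lemma \ref{lemma:multiplication} applies and gives
\begin{equation*}
\|f(\mathbf{u}) - f(\mathbf{v})\|_{K_p^s} \leq M_p \|G(\mathbf{u}, \mathbf{v})\|_{K_p^s} \|\mathbf{u} - \mathbf{v}\|_{K_p^s(\Omega)^N} \leq L(R) \|\mathbf{u} - \mathbf{v}\|_{K_p^s(\Omega)^N}.
\end{equation*}

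The main obstacle is the Bessel potential case of the first assertion: because $H_p^s$ lacks a simple pointwise seminorm, the proof must either cite an external Littlewood--Paley-based composition theorem or, alternatively, work through the Peetre-type local-means characterization after extending to $\mathbb{R}^d$. The rest of the argument---reduction via cut-off, Slobodeckij seminorm estimate, and the algebra-plus-mean-value derivation of the Lipschitz bound---is standard once the first assertion is in hand.
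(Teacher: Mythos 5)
Your proposal is correct and follows essentially the same route as the paper: cite the Runst--Sickel composition theorem (after extension to $\mathbb{R}^d$) for the first assertion, then derive the Lipschitz bound from the integrated mean-value identity together with the first assertion and the multiplication Lemma \ref{lemma:multiplication}. The only stylistic differences are that you add a cut-off reduction and a direct Slobodeckij-seminorm argument for $K = W$, and that you apply the first assertion to $Df(\tau\mathbf{u}+(1-\tau)\mathbf{v})$ for each $\tau$ followed by Minkowski's integral inequality, where the paper instead applies it once to the $C^1$ map $g(\cdot,\cdot)$ viewed as a function of $2N$ variables evaluated at $(\mathbf{u},\mathbf{v}) \in K_p^s(\Omega)^{2N}$ — both steps are valid and interchangeable.
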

	\begin{proof}
		The first part follows from Runst--Sickel \cite[Theorem 5.5.1/1]{RS1996}. We note that in \cite{RS1996}, the function spaces act on the full space $ \bbr^d $. Here we just need to employ suitable extensions for $ \OM $ so that we can recover the case of full space. For the second part, let $ u,v $ be arbitrary two functions in $ K_p^s(\OM)^N $ with $ \norm{\bu}_{K_p^s(\OM)^N}, \norm{\bv}_{K_p^s(\OM)^N} \leq R $. By a simple calculation, one obtains
		\begin{equation}
			\label{Eqs:composition}
			(f(\bu) - f(\bv))(x) = \int_{0}^{1} Df(t\bu + (1 - t)\bv)(x) \,\d t \cdot (\bu - \bv)(x),
		\end{equation}
		where $ Df(\bu) := \partial_{u_j}f(\bu) $, $ j = 1,2,...,N $. Now let $ g(\bu,\bv)(x) := \int_{0}^{1} Df(t\bu + (1 - t)\bv)(x) \,\d t $, we have $ g(\bu,\bv) \in C^1(\bbr^N \times \bbr^N) $ since $ f(\bu) \in C^2(\bbr^N) $. Then the first part implies that 
		\begin{equation*}
			\norm{g(\bu,\bv)}_{K_p^s(\OM)} \leq C(R),
		\end{equation*}
		which completes the proof with \eqref{Eqs:composition} and the multiplication property Lemma \ref{lemma:multiplication} with $ s > d/p $. 
	\end{proof}
	\begin{remark}
		\label{remark:composition-Sobolev}
		We comment that for the case $ s = 1 $, the lemma above holds true as well due to \cite{RS1996}.
	\end{remark}
	
	In the following, an anisotropic trace lemma is introduced for a fractional power space.
	\begin{lemma}[Anisotropic trace on the boundary]
		\label{lemma:trace-time-regularity}
		Let $ 1 < q < \infty $ and $ \OM \subset \bbr^d $, $ d \in \bbn_+ $, be a bounded domain with $ \Gamma := \partial \OM $ of class $ C^1 $, $ T > 0 $, and
		\begin{equation*}
			X_T := \H{\onehalf}(0,T; \Lq{q}(\OM)) \cap \Lq{q}(0,T; \W{1}(\OM)).
		\end{equation*}
		Then there is a trace operator
		\begin{equation*}
			\gamma: X_T \rightarrow X_{\gamma,T} := \W{\onehalf - \onehalfq}(0,T;\Lq{q}(\Gamma)) \cap \Lq{q}(0,T; \W{1 - \oneq}(\Gamma)),
		\end{equation*}
		such that $ \gamma f = \rvm{f}_{\Gamma} $ for $ f \in X_T \cap C([0,T] \times \Bar{\OM}) $ and
		\begin{equation*}
			\norm{\gamma f}_{X_{\gamma,T}} \leq C \norm{f}_{X_T},
		\end{equation*}
		where $ C > 0 $ is independent of $ T $. Moreover, it is surjective and has a continuous right-inverse.
	\end{lemma}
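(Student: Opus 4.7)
The plan is to reduce to a model configuration and then invoke standard mixed anisotropic trace results. First, I would use a $C^1$ partition of unity subordinate to a finite cover of $\bar\OM$ by coordinate patches, together with boundary-straightening diffeomorphisms, to localize the problem. Because the scales $H^{1/2}$ in time and $W^1$ in space transform continuously under $C^1$-changes of coordinates (for the spatial part this is classical for the first-order Sobolev norm; for the temporal $H^{1/2}$ part it follows since the time variable is untouched by the diffeomorphism), the norm equivalence after localization costs only a multiplicative constant depending on the chart, not on $T$. This reduces the problem to computing the trace from $X_T^+ := \H{1/2}(0,T; \Lq{q}(\bbr^d_+)) \cap \Lq{q}(0,T; \W{1}(\bbr^d_+))$ onto $\bbr^{d-1}$.

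On the half-space I would argue via real interpolation and a retraction-coretraction argument. The natural identification is that $\W{1-1/q}(\bbr^{d-1})$ is the trace space of $\W{1}(\bbr^d_+)$ onto its boundary, obtained by real interpolation of order $\theta = 1/q$ between $\W{1}$ and $\Lq{q}$. Applying the operator-valued trace/real-interpolation machinery (cf.\ Pr\"uss--Simonett \cite{PS2016}, Section~4.5) to the Banach-space valued space
\[
\Lq{q}(0,T; \W{1}(\bbr^d_+)) \cap \H{1/2}(0,T; \Lq{q}(\bbr^d_+)),
\]
one gets that the trace lives in the anisotropic space where the boundary loss $1/q$ in space is converted, via the $1/2$-in-time regularity, to an extra loss of $(1/q)\cdot(1/2) = 1/(2q)$ in time. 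Precisely, the trace belongs to
\[
\W{1/2 - 1/(2q)}(0,T;\Lq{q}(\bbr^{d-1})) \cap \Lq{q}(0,T; \W{1 - 1/q}(\bbr^{d-1})) = X_{\gamma,T}^+,
\]
with an estimate whose constant is independent of $T$ once one works on $\bbr$ and extends data by zero or reflection.

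For $T$-independence, rather than working on $(0,T)$, I would extend functions $f \in X_T$ to $\tilde f$ on $\bbr$ via a universal (Stein-type) extension operator whose norm is uniform in $T$, compute the trace on $\bbr$, and then restrict. Continuity of the extension in the $H^{1/2}$-scale together with continuity of restriction in the Slobodeckij scale yields a $T$-independent constant $C$.

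The existence of a continuous right-inverse $\cE: X_{\gamma,T} \to X_T$ would be obtained by lifting boundary data. After localization to the half-space, one can set $\cE g$ via a Poisson-type extension in the spatial normal variable (or, equivalently, by solving $(\partial_t + 1 - \Delta)u = 0$ in $\bbr^d_+ \times (0,T)$ with boundary value $g$ on $\bbr^{d-1}\times(0,T)$ and appropriate zero initial data after subtracting a correction), which provides a bounded operator into $X_T^+$ by the known maximal regularity of the half-space heat problem. Patching the local extensions back with the partition of unity gives $\cE$ globally. The main obstacle in carrying out this plan is bookkeeping the $T$-independence of all constants through the real interpolation step and through the extension/restriction: one must verify that the $X_T$ norm is invariant under zero extension in time (for vanishing traces) and that the retraction-coretraction for the anisotropic spaces is realized by operators whose norms do not scale with $T$; both are standard but must be checked carefully for the Bessel potential factor $H^{1/2}$ rather than the simpler $W^{1/2}$.
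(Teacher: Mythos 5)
Your proposal is correct and follows essentially the same route as the paper: localize by a partition of unity and boundary-straightening to reduce to a half-space, then invoke the anisotropic trace/interpolation machinery for intersection spaces $\H{1/2}(0,T;\Lq{q})\cap\Lq{q}(0,T;\W{1})$, which is exactly what the paper cites from Meyries--Schnaubelt (Theorem 4.5 of \cite{MS2012}) and Pr\"uss--Simonett (Proposition 6.2.4 of \cite{PS2016}). The paper simply quotes the half-space trace theorem directly, whereas you sketch the retraction--coretraction / real-interpolation argument inside it; the loss computation $1/q$ in space converting into $1/(2q)$ in time and the $T$-independence via a universal extension are the same points those references handle.
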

	\begin{proof}
		By means of a coordinate transformation and a partition of unity of $ \OM $, one can easily reduce it to case of a half-space $ \bbr^{d - 1} \times \bbr_+ $. Then thanks to \cite[Theorem 4.5]{MS2012} with $ s = 1/2 $,  $ m = 1 $, $ \mu = 1 $ (see also \cite[Proposition 6.2.4]{PS2016} with $ m = 1 $, $ \mu = 1 $ and $ p = q $ there), one completes the proof.
	\end{proof}

	\section{Reformulation and main result}
	\subsection{System in Lagrangian coordinates} \label{sec:reformulation}
	In this section, we transform \eqref{Eqs:v_f-Eulerian}--\eqref{Eqs:osinitial-Eulerian} in deformed domain $ \Omega^t $ to the reference domain $ \OM $, whose definition are given in Section \ref{sec:model-description}. Let $ \phi / \hat{\phi} $ be any scalar function in $ \OM^t / \OM $ and $ \mathbf{w} / \hat{\mathbf{w}} $ be any vector-valued function in $ \OM^t / \OM $. Then one can easily derive the relations between derivatives in different configurations as
	\begin{gather}
		\label{ptu}
		\pt \hat{\mathbf{w}}(\bX,t) = \left( \pt + \bv(\bx,t) \cdot \nabla \right) \mathbf{w}(\bx,t), \\
		\label{grad}
		\nabla \phi = \inv{\hF} \hnab \hat{\phi}, \quad \nabla \mathbf{w} = \inv{\hF} \hnab \hat{\mathbf{w}}, \\
		\label{div}
		\Div \mathbf{w} = \tr ( \nabla \bw ) = \tr ( \inv{\hF} \hnab \hat{\mathbf{w}} ) = \invtr{\hF} : \hnab \hat{\mathbf{w}}, 
	\end{gather}
	where $ \hF $ is defined as in \eqref{Eqs:deformation gradient}. 
	
	Before the reformulation, we assume an \textit{isotropic growth}, which is the simplest nontrivial form for the growth tensor. It is taken as a multiple of the identity, namely,
	\begin{equation*}
		\hFsg = \hg \bbi, \tin \Os,
	\end{equation*}
	where $ \hg = \hg(\bX,t) $ is the metric of growth, a scalar function depending on the concentration of macrophages. Note that there are other possibilities for growth, see e.g. \cite{Goriely2017:growth,JC2012}, the isotropic one is taken for the sake of analysis. Then we have $ \hJsg = \hg^3 $ indicating the isotropic change of a volume element and \eqref{Eqs:grwoth-before} becomes
	\begin{equation}
		\label{Eqs:grwoth-after}
		\pt \hg = \frac{f_s^g}{3 \hrs} \hg, \tin \Os \times (0,T),
	\end{equation}
	with $ \hg(\bX, 0) = \hg^0 $.
	
	Since it follows from \eqref{Eqs:dt:detA-invA} that
	\begin{equation*}
		\pt \hJ = \tr \left( \inv{\hF} \pt \hF \right) \hJ
		= \tr \left( \inv{\hF} \hnab \hv \right) \hJ 
		= \Div \bv \hJ,
	\end{equation*}
	we have
	\begin{equation*}
		\hJf = \rv{\hJf}_{t = 0} = \det \bbi = 1, \tin \Of.
	\end{equation*}
	By the decomposition of $ \hFs $ and the incompressibility of the solid, we know that $ \hJse = 1 $ and
	\begin{equation*}
		\hJs = \hJsg = \hg^3, \tin \Os.
	\end{equation*}
	
	Similar to \cite{AL2021a}, the reformulated system now reads as:
	\begin{subequations}
		\label{Eqs:fullsystem-Lagrangian} 
		\begin{alignat}{3}
			\label{Eqs:fluid-Lagrangian}
				\hrf \pt \hvf - \hdiv \bbp_f = 0, \quad
				\invtr{\hFf} : \hnab \hvf & = 0 
					&& \tin \Of \times (0, T), \\
			\label{Eqs:cf-Lagrangian}
				\pt \hcf - \hDf \hdiv \big( \inv{\hFf} \invtr{\hFf} \hnab \hcf \big) & = 0
					&& \tin \Of \times (0, T), \\
			\label{Eqs:solid-Lagrangian}
				- \hdiv \bbp_s = 0, \quad
				\invtr{\hFs} : \hnab \hus - \int_0^t \frac{\gamma \beta}{\hrs} \hcs \,\d \tau & = 0 
					&& \tin \Os \times (0, T), \\
			\label{Eqs:cs-Lagrangian}
				\pt \hcs - \hDs \inv{\hJs} \hdiv \big( \hJs \inv{\hFs} \invtr{\hFs} \hnab \hcs \big) + \beta \hcs \big( 1 + \frac{\gamma}{\hrs} \hcs \big) & = 0 
					&& \tin \Os \times (0, T), \\
			\label{Eqs:css-Lagrangian}
				\pt \hcss - \beta \hcs + \frac{\gamma \beta}{\hrs} \hcs \hcss = 0, \quad
				\pt \hg - \frac{\gamma \beta}{3 \hrs} \hcs \hg & = 0 
					&& \tin \Os \times (0, T), \\
			\label{Eqs:vjump-Lagrangian}
				\jump{\hv} = 0, \quad
				\jump{\bbp} \hng & = 0 
					&& \ton \Gamma \times (0, T), \\
			\label{Eqs:cjump-Lagrangian}
				\jump{\hD \inv{\hF} \invtr{\hF} \hnab \hc} \hng = 0, \ 
				\zeta \jump{\hc} - \hDs \inv{\hFs} \invtr{\hFs} \hnab \hcs \cdot \hng & = 0
					&& \ton \Gamma \times (0, T), \\
			\label{Eqs:boundary-Lagrangian}
				\bbp_s \hngs = 0, \ 
				\hDs \inv{\hFs} \invtr{\hFs} \hnab \hcs \cdot \hngs & = 0 
					&& \ton \Gs \times (0, T), \\
			\label{Eqs:ofinitial-Lagrangian}
				\rv{\hvf}_{t = 0} = \vo_f, \quad 
				\rv{\hcf}_{t = 0} & = \co_f 
					&& \tin \Of, \\
			\label{Eqs:osinitial-Lagrangian}
				\rv{\hus}_{t = 0} = \hus^0, \quad
				\rv{\hcs}_{t = 0} = \co_s, \quad 
				\rv{\hcss}_{t = 0} = \hc_*^0, \quad 
				\rv{\hg}_{t = 0} & = \hg^0 
					&& \tin \Os,
		\end{alignat}
	\end{subequations}
	where $	\bbp_i := \hJ_i \hat{\bbt}_i \invtr{\hF}_i $, $ i \in \{f,s\} $, denotes the first Piola--Kirchhoff stress tensor associated with the Cauchy stress tensor $ \bbt_i $ defined in Section \ref{sec:model-description}. 
	
	\subsection{Compatibility condition and well-posedness}
	Before stating our main theorem, one still needs to impose suitable function spaces and compatibility conditions. Following the general setting of maximal regularity, e.g. \cite{AL2021a,AL2021b,PS2016}, where the basic space is $ \Lq{q}(\OM) $, we assume that
	\begin{gather*}
		\vo_f \in \Bq{2 - \frac{2}{q}}(\Of)^3 =: \Dqv, \quad
		\co \in \Bq{2 - \frac{2}{q}}(\OM \backslash \Gamma) =: \Dqc, \quad 
		\hc_*^0, \hg^0 \in \W{1}(\Os),
	\end{gather*}
	and $ \Dq := \Dqv \times \Dqc $. Moreover, the solution space are defined by $ \YT := \prod_{j = 0}^7 \YT^j $, where 
	\begin{gather*}
		\YT^1 := \W{1}(0,T; \Lq{q}(\Of)^3) \cap \Lq{q}(0,T; \W{2}(\Of)^3), \\
		\YT^2 := \H{\onehalf}(0,T; \W{1}(\Os)^3) \cap \Lq{q}(0,T; \W{2}(\Os)^3), \\
		\YT^3 := \left\{
			\begin{aligned}
				& \pi \in \Lq{q}(0,T; \W{1}(\Of)): \\
				& \qquad \qquad 
				\rv{\pi}_{\Gamma} \in \W{\onehalf - \onehalfq}(0,T; \Lq{q}(\Gamma)) \cap \Lq{q}(0,T; \W{1 -\oneq}(\Gamma))
			\end{aligned}
		\right\}, \\
		\YT^4 := \Lq{q}(0,T; \W{1}(\Os)) \cap \H{\onehalf}(0,T; \Lq{q}(\Os)),
			\\
		\YT^5 := \W{1}(0,T; \Lq{q}(\OM)) \cap \Lq{q}(0,T; \W{2}(\OM \backslash \Gamma)), \\
		\YT^6 := \W{1}(0,T; \W{1}(\Os)), \quad
		\YT^7 := \W{1}(0,T; \W{1}(\Os)).
	\end{gather*}

	Analogous to \cite{AL2021a}, the compatibility conditions for $ \hvf^0 $ and $ \hc^0 $ read as
	\begin{equation}
		\label{Eqs:compatibility}
		\begin{gathered}
			\Div \vo_f = 0, \quad
			\rv{\vo_f}_\Gamma = 0, 
			\\
			\rv{\big( \zeta \jump{\co} - \hDs \hnab \co_s \cdot \hng \big)}_\Gamma = 0, \quad 
			\rv{\jump{\hD \hnab \co} \cdot \hng}_\Gamma = 0, \quad 
			\rv{\hDs \hnab \co_s \cdot \hngs}_{\Gs} = 0,
		\end{gathered}
	\end{equation}

	Generally speaking, one does not need to assign any initial pressure for the Stokes equation. However, in this manuscript the coupling on the interface does lead to a condition on the initial fluid pressure since the solid equation is quasi-stationary and holds at $ t = 0 $. More specifically, we assume that there exists $ \hpif^0 \in \W{1 - 3/q}(\Gamma) $ and $ (\hus^0, \hpis^0) \in \W{2 - 2/q}(\Os)^3 \times \W{1 - 2/q}(\Os) $ satisfying
	\begin{equation}
		\label{Eqs:us0-smallness}
		\norm{\hnab \hus^0}_{\W{1 - \frac{2}{q}}(\Os)} 
		+ \norm{\hpis^0}_{\W{1 - \frac{2}{q}}(\Os)} 
		\leq \kappa,
	\end{equation}
	for sufficiently small $ \kappa > 0 $, such that
	\begin{equation}
		\label{Eqs:us0-equation}
		\begin{alignedat}{3}
			- \hdiv (DW(\bbi + \hnab \hus^0)) + \hnab \hpis^0 & = 0, && \tin \Os, \\
			\hdiv \hus^0 & = 0, && \tin \Os, \\
			\big(- \hpis^0 \bbi + DW(\bbi + \hnab \hus^0)\big) \hn_{\Gamma} & = \big(- \hpif^0 \bbi + \nuf (\hnab \vo_f + \hnab^\top \vo_f)\big) \hn_{\Gamma}, && \ton \Gamma, \\
			\big(- \hpis^0 \bbi + DW(\bbi + \hnab \hus^0)\big) \hn_{\Gs} & = 0, && \ton \Gs.
		\end{alignedat}
	\end{equation}
	
	\begin{remark}
		Here, the regularity for $ \hpif^0 $ on the interface $ \Gamma $ is initiated from the matched regularity of $ \hnab \vo_f $, $ \hpis^0 $ and $ DW(\bbi + \hnab \hus^0) $ on $ \Gamma $. Moreover, it coincides with the regularity of $ \hpif $ by the trace method of interpolation (see e.g. \cite[Example 3.4.9]{PS2016}), i.e.,
		\begin{equation*}
			\W{\onehalf - \onehalfq}(0,T; \Lq{q}(\Gamma)) \cap \Lq{q}(0,T; \W{1 -\oneq}(\Gamma)) \hookrightarrow
			C([0,T]; \W{1 - \frac{3}{q}}(\Gamma)).
		\end{equation*}
	\end{remark}
	\begin{remark}
		\label{remark:smallness}
		In this paper, we need the smallness assumption of initial displacement to guarantee the estimates with respect to the deformation gradient, e.g. \eqref{Eqs:Fs}, which is a key element to derive the final contraction property of the certain operator. This is because we consider the general case of $ \rvm{\hus}_{t = 0} $ and linearize the elastic equation around the identity $ \bbi $, not the initial deformation gradient $ \bbi + \hnab \hus^0 $. Specifically, one can not control $ (\hFs - \bbi) $ by a small constant only with a short time. In particular, for the case $ \hus^0 = 0 $, one knows $ \rvm{\hFs}_{t = 0} = \bbi $ and hence the estimates later is uniform with respect to time $ T > 0 $. Moreover, for initial pressure it does also need the smallness due to the sharp regularity of pressure, see e.g. \eqref{Eqs:pressureEstimate}.
	\end{remark}

	\begin{theorem}
		\label{theorem: main}
		Let $ 5 < q < \infty $ and $ \kappa > 0 $ be a sufficiently small constant. $ \OM \subset \bbr^3 $ is the domain defined above with $ \Gamma $, $ \Gs $ hypersurfaces of class $ C^3 $. Assume that $ (\vo_f, \co) \in \Dq $ satisfying the compatibility condition \eqref{Eqs:compatibility}, $ \hpif^0 \in \W{1 - 3/q}(\Gamma) $, $ \hc_*^0, \hg^0 \in \W{1}(\Os) $ and $ (\hus^0, \hpis^0) \in \W{2 - 2/q}(\Os)^3 \times \W{1 - 2/q}(\Os) $ fulfilling \eqref{Eqs:us0-smallness} and \eqref{Eqs:us0-equation}.
		Then there is a positive $ T_0 = T_0(\vo_f, \co, \hc_*^0, \hg^0, \kappa) < \infty $ such that for $ 0 < T < T_0 $, the problem \eqref{Eqs:fullsystem-Lagrangian} admits a unique solution $ (\hvf, \hus, \hpif, \hpis, \hc, \hcss, \hg) \in \YT $. Moreoever, $ \hc, \hcss, \hg \geq 0 $ if $ \co, \hc_*^0, \hg^0 \geq 0 $. 
	\end{theorem}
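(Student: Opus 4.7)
The plan is to solve \eqref{Eqs:fullsystem-Lagrangian} by a Banach fixed-point argument on a small ball in the solution space $\YT$, built around a reference solution of the linearized system. First I would linearize around the identity configuration ($\hF \equiv \bbi$, $\hJ \equiv 1$, $\hg \equiv 1$). As anticipated in the technical discussion, the crucial point is to decouple the interfacial coupling: prescribe a Neumann condition on $\Gamma$ for the nonstationary Stokes problem for $(\hvf,\hpif)$, arising from the stress balance $\jump{\bbp}\hng=0$, and a Dirichlet condition $\rv{\hus}_{\Gamma} = \int_0^t \rv{\hvf}_{\Gamma}\,\d\tau$ for the quasi-stationary Stokes-type problem for $(\hus,\hpis)$ with principal part $-\hdiv(D^2 W(\bbi)\hnab\, \cdot)$, while $\Gs$ carries a Neumann condition inherited from \eqref{Eqs:vboundary-Eulerian}. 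The cell block splits into a parabolic transmission problem for $\hc$ and two ODEs for $\hcss$ and $\hg$. By the maximal regularity theorems of Section \ref{sec:analysis-linear} and the Stokes-type resolvent result in Appendix \ref{sec:sta-Stokes-lower-regularity}, the linearized solution operator is an isomorphism between the natural data space and $\YT$. Using the given initial data and the compatibility conditions \eqref{Eqs:compatibility}--\eqref{Eqs:us0-equation}, I would construct a reference solution $w_* \in \YT$ that carries the initial values.

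Second, I set up the fixed-point map $\cF: \tZT \to \tZT$, where $\tZT \subset \YT$ denotes the subspace of functions with vanishing initial trace. For $z \in \tZT$ and $w := z + w_*$, define $\cF(z)$ as the solution of the linearized system with right-hand sides equal to the nonlinear residuals, namely the differences between the true quasilinear operators and their linearizations at the identity. The residuals involve $\hF-\bbi$, $\inv{\hF}-\bbi$, $\hJ-1$, $DW(\bbi+\hnab\hus) - D^2 W(\bbi)\hnab\hus$ on the bulk and the interface, as well as the bilinear cell terms $\hcs\hcss$ and $\hrs^{-1}\hcs$, and the solenoidal defect $(\invtr{\hF}-\bbi):\hnab\hv$. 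A fixed point of $\cF$ yields a solution of \eqref{Eqs:fullsystem-Lagrangian}.

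Third, and this is the main obstacle, I must verify that $\cF$ is a self-map and contraction on a small ball in $\tZT$. Each residual has to be estimated in the sharp data space dictated by maximal regularity, most delicately in the anisotropic trace space $\W{\onehalf-\onehalfq}(0,T;\Lq{q}(\Gamma))\cap \Lq{q}(0,T;\W{1-\oneq}(\Gamma))$ for the Neumann datum of the fluid Stokes problem, matching the regularity of $D^2 W(\bbi)\hnab\hus$ on $\Gamma$ through Lemma \ref{lemma:trace-time-regularity}. The bulk tools are the multiplication Lemma \ref{lemma:multiplication} (applicable since $5<q$ ensures $sq > d = 3$ at the relevant regularities), the composition Lemma \ref{lemma:composition-Slobodeckij} for handling the nonlinear dependence of $DW$ on $\hnab\hus$, and the time-space embedding Lemma \ref{lemma:time-space embedding} together with Lemmas \ref{lemma:timeembedding}--\ref{lemma:timeembedding-continuous} to extract factors $T^\delta$ that enforce contraction. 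Since we linearize around $\bbi$ and not around $\bbi+\hnab\hus^0$, the initial value $\rv{\hFs}_{t=0}-\bbi = \hnab\hus^0$ does not vanish as $T\to 0$, and short-time smallness alone cannot control it; this is precisely where the smallness assumption \eqref{Eqs:us0-smallness} enters, as explained in Remark \ref{remark:smallness}. Choosing $\kappa$ small bounds $\hFs-\bbi$ and $\hpis$ in their sharp trace norms at the initial slice, after which Lemma \ref{lemma:timeembedding} upgrades this to a $O(T^\delta)$ bound on $(0,T)$ for the variations. Combining these estimates, for $\kappa$, the ball radius $R$, and $T$ sufficiently small (in this order), $\cF$ maps $B_R(0)$ into itself and is a strict contraction; Banach's fixed-point theorem yields the unique solution $(\hvf,\hus,\hpif,\hpis,\hc,\hcss,\hg)\in\YT$.

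Finally, for the nonnegativity statement, the difficulty is that $\vs = \pt\hus$ is only in $\H{-1/2}(0,T;\W{1}(\Os))$-type regularity and is not continuous in space-time, which prevents a direct application of the maximum principle as used in \cite{AL2021a}. The remedy is a mollification in time: approximate $\vs$ by smooth $\vs^\epsilon$ so that $\int_0^t \vs^\epsilon\,\d\tau \to \hus$ in a norm strong enough to make the coefficients of the cell equations continuous. A classical strong maximum principle together with Hopf's lemma, used to handle the jump term $\zeta\jump{\hc}$ on $\Gamma$, then yields nonnegative approximants $\hc^\epsilon$; passing $\epsilon\to 0$ recovers $\hc\geq 0$. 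Nonnegativity of $\hcss$ and $\hg$ follows from the Duhamel/exponential representation of \eqref{Eqs:css-Lagrangian} since their sources are proportional to $\hcs\geq 0$.
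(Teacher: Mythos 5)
Your proposal matches the paper's argument in all essentials: linearize around the identity, split the interface into a Neumann datum for the nonstationary Stokes problem and a Dirichlet datum $\rv{\hus}_\Gamma = \int_0^t \rv{\hvf}_\Gamma\,\d\tau$ for the quasi-stationary Stokes-type problem, invoke the maximal-regularity isomorphism of Section \ref{sec:analysis-linear}, close the fixed-point estimate via Lemmas \ref{lemma:timeembedding}--\ref{lemma:composition-Slobodeckij} together with the smallness $\kappa$ from \eqref{Eqs:us0-smallness} (needed precisely because $\hnab\hus^0$ and $\hpis^0$ do not tend to zero with $T$), and finish the positivity of $\hc$ by approximating $\vs$ with a smooth mollification, applying the \cite{AL2021a}-type maximum-principle argument to the regularized problem, and passing to the limit, with $\hcss,\hg \geq 0$ following from their ODEs. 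The only cosmetic difference is that you work additively with a reference solution $w_*$ and a zero-initial-trace variation $z$ in a small ball, whereas the paper works directly in a closed ball $\overline{B_{\YT}(0,R)}$ with $R$ chosen \emph{large} to accommodate $\norm{(\vo_f,\co)}_{\Dq}$ and then shrinks $T,\kappa$; the two are equivalent up to translation and either can be made to close the contraction.
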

	Motivated by \cite{AL2021a,PS2016}, we prove Theorem \ref{theorem: main} via the Banach fixed point theorem. To be more precise, we are going to linearize \eqref{Eqs:fullsystem-Lagrangian} in the first step, show the well-posedness of the linear system, estimate the nonlinear terms in suitable function spaces with small time and then constract a contraction mapping.
	
	\begin{remark}
		\label{remark:higherdimension}
		In fact, Theorem \ref{theorem: main} still holds true in even more general dimensional case $ n \geq 2 $ as long as  $ q $ has a adapted restriction with respect to $ n $. This is also an advantage of making use of maximal regularity theory.
	\end{remark}

	\subsection{Linearization}
	\label{sec:linearization}
	Now following the linearization procedure in \cite{AL2021a}, we linearize \eqref{Eqs:fullsystem-Lagrangian} first, equate all the lower-order terms to the right-hand side and then arrive at the equivalent system: 
	\begin{subequations}
		\label{Eqs:fullsystem-Lagrangian-linear} 
		\begin{alignat}{3}
			\label{Eqs:fluid-Lagrangian-linear}
				\hrf \pt \hvf - \hdiv \bS(\hvf, \hpif) & = \bK_f
					&& \tin \Of \times (0, T), \\
			\label{Eqs:fluidmass-Lagrangian-linear}
				\hdiv \hvf & = G_f
					&& \tin \Of \times (0, T), \\
			\label{Eqs:fluidgamma-Lagrangian-linear}
				\bS(\hvf, \hpif) \hng - (D^2 W(\bbi) \hnab \us - \hpis \bbi) \hng & = \bH_f^1
					&& \ton \Gamma \times (0, T), \\
			\label{Eqs:solid-Lagrangian-linear}
				- \hdiv (D^2 W(\bbi) \hnab \us) + \hnab \hpis & = \bK_s 
					&& \tin \Os \times (0, T), \\
			\label{Eqs:solidmass-Lagrangian-linear}
				\hdiv \hus - \int_0^t \frac{\gamma \beta}{\hrs} \hcs \,\d \tau & = G_s 
					&& \tin \Os \times (0, T), \\
			\label{Eqs:solidgamma-Lagrangian-linear}
				\hus & = \bH_s^1
					&& \ton \Gamma \times (0, T), \\
			\label{Eqs:solidgs-Lagrangian-linear}
				(D^2 W(\bbi) \hnab \us - \hpis \bbi) \hngs & = \bH^2
					&& \ton \Gs \times (0, T), \\
			\label{Eqs:cf-Lagrangian-linear}
				\pt \hcf - \hDf \hDelta \hcf & = F_f^1
					&& \tin \Of \times (0, T), \\
			\label{Eqs:cfgamma-Lagrangian-linear}
				\hDf \hnab \hcf \cdot \hng & = F_f^2
					&& \ton \Gamma \times (0,T), \\
			\label{Eqs:cs-Lagrangian-linear}
				\pt \hcs - \hDs \hDelta \hcs & = F_s^1 
					&& \tin \Os \times (0, T), \\
			\label{Eqs:csgamma-Lagrangian-linear}
				\hDs \hnab \hcs \cdot \hng & = F_s^2
					&& \ton \Gamma \times (0, T), \\
			\label{Eqs:csgs-Lagrangian-linear}
				\hDs \hnab \hcs \cdot \hngs & = F^3
					&& \ton \Gs \times (0, T), \\
			\label{Eqs:css-Lagrangian-linear}
				\pt \hcss + \beta(\frac{\gamma \hc_*^0}{\hrs} - 1) \hcs = F^4, \quad
				\pt \hg - \frac{\gamma \beta \hg^0}{3 \hrs} \hcs & = F^5 
					&& \tin \Os \times (0, T), \\
			\label{Eqs:ofinitial-Lagrangian-linear}
				\rv{\hvf}_{t = 0} = \vo_f, \quad 
				\rv{\hcf}_{t = 0} & = \co_f 
					&& \tin \Of, \\
			\label{Eqs:osinitial-Lagrangian-linear}
				\rv{\hus}_{t = 0} = \hus^0, \quad 
				\rv{\hcs}_{t = 0} = \co_s, \quad 
				\rv{\hcss}_{t = 0} = \hc_*^0, \quad 
				\rv{\hg}_{t = 0} & = \hg^0
					&& \tin \Os,
		\end{alignat}
	\end{subequations}
	where $ \bS(\hvf, \hpif) := - \hpif + \nuf (\hnab \hvf + \tran{\hnab} \hvf) $ and 
	\begin{align*}
		& \bK_f = \hdiv \tk_f, \quad 
			\bK_s = \hdiv \tk_s, \\
		& G_f = - \left( \FfOinvtran \right) : \hnab \hvf, \quad
			G_s = - ( \FsOinvtran ) : \hnab \hus \\
		& \bH_f^1 = -\tk_f \hng + \ks \hng, \quad 
			\bH_s^1 = \int_0^t \hvf(\bX, \tau) \d \tau, \quad
			\bH^2 = - \ks \hngs, \\
		& F_f^1 = \hdiv \tFf, \quad 
			F_s^1 = \hdiv \tFs - \beta \hcs \left( 1 + \frac{\gamma}{\hrs} \hcs \right) - \frac{3 \hnab \hg}{\hg} \cdot \left( \hDs \inv{\hFs} \invtr{\hFs} \hnab \hcs \right), \\
		& F_f^2 = \hDs \nabla \hcs \cdot \hng - \jump{\tF} \cdot \hng, \quad
			F_s^2 = \zeta \jump{\hc} - \tFs \cdot \hng, \\
		& F^3 = -\tFs \cdot \hngs, \quad
			F^4 = - \frac{\gamma \beta}{\hrs} \hcs (\hcss - \hc_*^0), \quad
			F^5 = - \frac{\gamma \beta}{3 \hrs} \hcs \left( \hg - \hg^0 \right), 
	\end{align*}
	with
	\begin{align*}
		& \kf = - \hpif ( \FfOinv ) 
		+ \nuf \big( \inv{\hFf} \hnab \hvf + \tran{\hnab} \hvf \invtr{\hFf} \big) ( \FfOinvtran ) \\
		& \qquad \quad + \nuf \big( (\FfOinv) \hnab \hvf + \tran{\hnab} \hvf  (\FfOinvtran) \big), \\
		& \ks = - \hg^3 \hpis ( \FsOinv ) - (\hg^3 - (\hg^0)^3) \hpis \bbi - ((\hg^0)^3 - 1) \hpis \bbi \\
			& \qquad \quad + DW(\hFs) \big((\hg^0)^2 - \hg^2\big)
			+ + DW(\hFs) \big(1 - (\hg^0)^2\big)
			+ \hg^2\big(DW(\hFs) - DW(\hFs / \hg)\big) \\
		& \qquad \quad + \int_{0}^1 D^3 W\big((1-s)\bbi + s\hFs\big)(1-s) \d s (\hFs - \bbi) (\hFs - \bbi), \\
		& \tF = \hD \big( \inv{\hF} \invtr{\hF} - \bbi \big) \hnab \hc.
	\end{align*}
	\begin{remark}[Discussions on the linearization]\
		\label{remark:linearization-elastic}
		\begin{enumerate}
			\item The linearization can be derived as follows. Let $ h(s) := DW((1-s)\bbi + s\bF) $. Then $ h(0) = DW(\bbi),  h(1) = DW(\bF) $.
			Since
			\begin{equation*}
				h(1) = h(0) + h'(0) + \int_0^1 h''(s)(1-s) \,\d s,
			\end{equation*}
			it follows from \ref{assumptions:DW(I)} that
			\begin{equation*}
				DW(\bF) = D^2 W(\bbi) (\bF - \bbi) + \bR(\bF),
			\end{equation*}
			where
			\begin{equation*}
				\bR(\bF) := \int_0^1 D^3 W((1-s)\bbi + s\bF)(1-s) \,\d s (\bF - \bbi) (\bF - \bbi).
			\end{equation*}
			\item The linearization is similar to the one in \cite{AL2021a}, but with several modifications, one of which is deduced above. It is possible to have other kinds of linearizations but we remark that in the present paper a divergence structure of $ \hdiv \tk_s $ plays an essential role when we prove the linear theory and estimate it in a particular function space, see Corollary \ref{coro:f=divF} and Proposition \ref{prop:Lipschitzestimate} later. Moreover, for the solid mass balance equation \eqref{Eqs:solid-mass-Eulerian} (equivalently $ \Div \vs = {f_s^g}/{\rs} $ since the density is constant), we integrate it over $ (0,t) $ as \eqref{Eqs:solidmass-Lagrangian-linear} to keep the Stokes-type structure for the elastic equation with respect to the displacement $ \hus $. 
			\item Noticing that the continuity conditions \eqref{Eqs:vjump-Lagrangian} on the interface are separated to \eqref{Eqs:fluidgamma-Lagrangian-linear} and \eqref{Eqs:solidgamma-Lagrangian-linear} formally after the linearization, we remark here that this is for the sake of analysis due to the mismatch of the regularity on $ \Gamma $. For instant, if one replaces \eqref{Eqs:fluidgamma-Lagrangian-linear} with the boundary condition $ \hvf = \pt \hus $, it has no chance to solve the fluid part since we have no first-order temporal derivative information for the solid displacement $ \hus $.
		\end{enumerate}
	\end{remark}
	\begin{remark}
		\label{remark:G-form}
		Analogously to \cite[Remark 2.3]{AL2021a}, $ G $ also possesses the form
			\begin{equation}
				\label{Eqs:G-form}
				\begin{aligned}
					G_f & = - \hdiv \big( ( \FfOinv ) \hvf \big), \quad
					G_s = - \hdiv \big( ( \FsOinv ) \hus \big)
					+ \hus \cdot \hdiv \invtr{\hFs},
				\end{aligned}
			\end{equation}
			with the help of the Piola identity $ \hdiv(\hJ \invtr{\hF}) = 0 $.
	\end{remark}
\section{Analysis of the linear systems}
	\label{sec:analysis-linear}
	In this section, we are devoted to solve the linear systems associated with \eqref{Eqs:fullsystem-Lagrangian-linear}.
	\subsection{Nonstationary Stokes equation}
	\label{sec:nonsta-Stokes}
	Let $ \OM $ be a bounded domain with a boundary $ \partial \OM $ of class $ C^{3-} $, $ T > 0 $. We consider the nonstationary Stokes equation
	\begin{equation}
		\label{Eqs:linear-nonsta}
		\begin{alignedat}{3}
			\rho \pt \bu - \Div \Smu(\bu, \pi) & = \mathbf{f}, && \tin \OM \times (0, T), \\
			\Div \bu & = g, && \tin \OM \times (0, T), \\
			\Smu(\bu, \pi) \bn & = \bh, && \ton \partial \OM \times (0, T), \\
			\rv{\bu}_{t = 0} & = \bu_0, && \tin \OM,
		\end{alignedat}
	\end{equation}
	where $ \Smu(\bu, \pi) = - \pi \bbi + \mu (\nabla \bu + \tran{\nabla} \bu) $. $ \rho, \mu > 0 $ are the constant density and viscosity. $ \bn $ denotes the unit outer normal vector on $ \partial \OM $. Then we have the following solvability and regularity result, which can be adapted directly from e.g. Abels \cite[Theorem 1.1]{Abels2010}, Bothe--Pr\"uss \cite[Theorem 4.1]{BP2007}, Pr\"uss--Simonett \cite[Theorem 7.3.1]{PS2016} by the argument of Abels--Liu \cite[Proposition A.1]{AL2021a}.
	\begin{theorem}
		\label{thm:linear-Stoke-nonstationary}
		Let $ 3 < q < \infty $, $ T_0 > 0 $. Suppose that the initial data is $ \bu_0 \in \W{2 - 2/q}(\OM)^3 $ satisfying compatibility conditions
		\begin{gather*}
			\Div \bu_0 = \rv{g}_{t = 0}, \quad 
			\rv{\cP_{\bn}(\mu (\nabla \bu_0 + \tran{\nabla} \bu_0) \bn)}_{\partial \OM} = \rv{\bh}_{t = 0},
		\end{gather*}
		where $ \cP_{\bn} := \bbi - \bn \otimes \bn $ denotes the tangential projection onto $ \partial \OM $. For given data $ (\mathbf{f}, g, \bh) $ with
		\begin{gather*}
			\mathbf{f} \in \bbf_{\mathbf{f}}(T) := \Lq{q}(0,T; \Lq{q}(\OM)^3), \\
			g \in \bbf_g(T) := \Lq{q}(0,T; \W{1}(\OM)) \cap\W{1}(0,T; \W{- 1}(\OM)), \\
			\bh \in \bbf_\bh(T) := \Lq{q}(0,T; \W{1 - \oneq}(\partial \OM)^3) \cap \W{\onehalf - \onehalfq}(0,T; \Lq{q}(\partial \OM)^3),
		\end{gather*}
		\eqref{Eqs:linear-nonsta} admits a unique solution $ (\bu, \pi) \in \bbe(T) := \bbe_\bu(T) \times \bbe_\pi(T) $ where
		\begin{gather*}
			\bbe_\bu(T) := \Lq{q}(0,T; \W{2}(\OM)^3) \cap \W{1}(0,T; \Lq{q}(\OM)^3), \\
			\bbe_\pi(T) := \left\{
				\Lq{q}(0,T; \W{1}(\OM)):
				\rv{\pi}_{\partial \OM} \in \W{\onehalf - \onehalfq}(0,T; \Lq{q}(\partial \OM)) \cap \Lq{q}(0,T; \W{1 - \oneq}(\OM))
			\right\}.
		\end{gather*}
		Moreover, there is a constant $ C > 0 $ independent of $ \mathbf{f}, g, \bh, \bu_0, T_0 $, such that for $ 0 < T \leq T_0 $
		\begin{equation*}
			\norm{(\bu, \pi)}_{\bbe(T)} \leq C \left(
				\norm{\mathbf{f}}_{\bbf_{\mathbf{f}}(T)}
				+ \norm{g}_{\bbf_g(T)}
				+ \norm{\bh}_{\bbf_\bh(T)}
				+ \norm{\bu_0}_{\W{2 - 2/q}(\OM)}
			\right).
		\end{equation*}
	\end{theorem}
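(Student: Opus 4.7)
The plan is to reduce the problem to one with vanishing initial data and then assemble the known maximal $L^q$-regularity results for the Stokes system with Neumann boundary conditions, following the recipe indicated in the statement of the theorem.

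First, I would construct an auxiliary velocity $\bu^* \in \bbe_\bu(T)$ with $\rv{\bu^*}_{t=0} = \bu_0$, using the standard extension associated with the characterisation of $\W{2-2/q}(\OM)^3$ as the initial-trace space of $\bbe_\bu(T)$. Setting $\tilde{\bu} := \bu - \bu^*$ reduces \eqref{Eqs:linear-nonsta} to a Stokes system with zero initial data and modified right-hand sides $\tilde{\mathbf{f}} := \mathbf{f} - \rho \pt \bu^* + \Div \Smu(\bu^*, 0)$, $\tilde{g} := g - \Div \bu^*$, $\tilde{\bh} := \bh - \Smu(\bu^*, 0)\bn$, whose norms are controlled by those of the original data plus $\norm{\bu_0}_{\W{2-2/q}(\OM)}$. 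The compatibility $\Div \bu_0 = \rv{g}_{t=0}$ forces $\rv{\tilde{g}}_{t=0} = 0$, while the tangential-stress compatibility ensures $\rv{\cP_\bn \tilde{\bh}}_{t=0} = 0$; the normal component of $\bh$ needs no compatibility as it is absorbed into the pressure.

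Second, with vanishing initial trace for the data, I would invoke the maximal regularity theorem for the Stokes equation with Neumann-type boundary conditions on a bounded $C^{3-}$ domain, as provided by \cite[Theorem 4.1]{BP2007} and \cite[Theorem 7.3.1]{PS2016}, together with the refinement of \cite[Theorem 1.1]{Abels2010} that accommodates the mixed $\Lq{q}(0,T;\W{1}(\OM)) \cap \W{1}(0,T;\W{-1}(\OM))$ regularity of the divergence datum (the latter is typically handled via a Bogovskii-type splitting so that only the $\Lq{q}(\W{1})$ part enters the cited theorems). This produces a unique $(\tilde{\bu}, \tilde{\pi}) \in \bbe(T)$ with vanishing temporal trace, along with the corresponding estimate. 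The anisotropic regularity of $\rv{\tilde{\pi}}_{\partial \OM}$ is then read off from the boundary identity $\Smu(\tilde{\bu}, \tilde{\pi})\bn = \tilde{\bh}$ combined with the anisotropic trace Lemma \ref{lemma:trace-time-regularity} applied to $\nabla \tilde{\bu}$.

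Finally, to secure the constant $C$ independent of $T \in (0, T_0]$, I would follow the argument of \cite[Proposition A.1]{AL2021a}: on the subspaces with vanishing initial trace one extends data by zero to the fixed interval $(0, T_0)$, applies the maximal regularity estimate with its $T_0$-dependent constant, and restricts back, noting that the extension is norm-preserving on such subspaces. The main obstacle is the careful bookkeeping around the pressure's anisotropic boundary trace and the $\W{1}(0,T; \W{-1}(\OM))$ component of $g$: the extensions, Bogovskii liftings, and trace reconstructions must all respect the vanishing initial traces so that the cited theorems genuinely apply to $(\tilde{\mathbf{f}}, \tilde{g}, \tilde{\bh})$. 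Once this is in place, reversing the substitution $\bu = \tilde{\bu} + \bu^*$ yields the theorem.
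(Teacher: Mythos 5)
Your proposal is correct and follows essentially the same route the paper takes: the paper gives no detailed proof of Theorem~\ref{thm:linear-Stoke-nonstationary}, instead remarking that the result is "adapted directly from" \cite[Theorem 1.1]{Abels2010}, \cite[Theorem 4.1]{BP2007}, \cite[Theorem 7.3.1]{PS2016} via the argument of \cite[Proposition A.1]{AL2021a}, and your three steps (reduction to vanishing initial data by subtracting an extension $\bu^*$ of $\bu_0$, invocation of the cited maximal $L^q$-regularity theorems for the Stokes--Neumann problem with the Bogovskii splitting to handle the $\W{1}(0,T;\W{-1}(\OM))$ part of $g$, and the zero-extension trick on vanishing-trace subspaces to secure a $T$-uniform constant for $0<T\le T_0$) are exactly the content of that reference chain, spelled out.

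<br>
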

	\begin{remark}
		In our case, there will be a term of the form $ (D^2 W(\bbi) \nabla \bv - p \bbi) \bn $ in the third equation of \eqref{Eqs:linear-nonsta} with certain regularity. It is not a problem since given $ (\bv,p) $ such that $ (D^2 W(\bbi) \nabla \bv - p \bbi) \bn $ is endowed with the same regularity of $ \bh $, one can solve the original equation with $ \bh = (D^2 W(\bbi) \nabla \bv - p \bbi) \bn $ and $ (\mathbf{f}, g, \bu_0) = 0 $ by Theorem \ref{thm:linear-Stoke-nonstationary} and add the solution above to recover the case.
	\end{remark}
	\subsection{Quasi-stationary Stokes equation with mixed boundary conditions}
	\label{sec:sta-Stokes-mixed}
	Let $ \OM $ be a bounded domain with a boundary $ \partial \OM $ of class $ C^{3-} $, $ \partial \OM = \Gamma_1 \cup \Gamma_2 $ consisting of two closed, disjoint, nonempty components. Consider the generalized stationary Stokes-type equation
	\begin{equation}
		\label{Eqs:linear-sta}
		\begin{alignedat}{3}
			- \Div (D^2 W(\bbi) \nabla \bu) + \nabla \pi & = \mathbf{f}, && \tin \OM, \\
			\Div \bu & = g, && \tin \OM, \\
			\bu & = \bh^1, && \ton \Gamma_1, \\
			(D^2 W(\bbi) \nabla \bu - \pi\bbi) \bn & = \bh^2, && \ton \Gamma_2,
		\end{alignedat}
	\end{equation}
	where $ \bn $ denotes the unit outer normal vector on $ \partial \OM $. $ W : \bbr^{3 \times 3} \rightarrow \bbr_+ $ is the elastic energy density such that \ref{assumption:W-energy-density} holds. Before going to the quasi-stationary case, we first investigate the weak solution and strong solution in $ L^q $-class of the stationary problem \eqref{Eqs:linear-sta}.
	\begin{theorem}
		\label{thm:linear-sta}
		Let $ 1 < q < \infty $ and $ s \in \{0, -1\} $. Given $ \mathbf{f} \in W_{q, \Gamma_1}^{s}(\OM)^3 $, $ g \in \W{1 + s}(\OM) $, $ \bh^1 \in \W{2 + s - 1/q}(\Gamma_1)^3 $ and $ \bh^2 \in \W{1 + s - 1/q}(\Gamma_2)^3 $. Then problem \eqref{Eqs:linear-sta} admits a unique solution $ (\bu, \pi) \in \W{2 + s}(\OM)^3 \times \W{1 + s}(\OM) $. Moreover, there is a constant $ C > 0 $ such that
		\begin{equation*}
			\norm{\bu}_{\W{2 + s}(\OM)^3} + \norm{\pi}_{\W{1 + s}(\OM)} 
			\leq C \Big(
				\norm{g}_{\W{1 + s}(\OM)} 
					+ \norm{\bh^1}_{\W{2 + s - \frac{1}{q}}(\Gamma_1)^3}
					+ \NORM{\cF}_{s}
			\Big).
		\end{equation*}
		where $ \NORM{\cF}_{s} := \norm{\mathbf{f}}_{\Lq{q}(\OM)^3} + \norm{\bh^2}_{\W{1 - \oneq}(\OM)^3} $ if $ s = 0 $ and when $ s = -1 $,
		\begin{equation*}
			\NORM{\cF}_{s}
				:= \sup_{\norm{\bw}_{W_{q',\Gamma_1}^{1}(\OM)^3} = 1}
				\Big(
					\inner{\mathbf{f}}{\bw}_{W_{q,\Gamma_1}^{-1}(\OM)^3 \times W_{q',\Gamma_1}^{1}(\OM)^3} + \inner{\bh^2}{\rv{\bw}_{\Gamma_2}}_{\W{- \oneq}(\Gamma_2)^3 \times W_{q'}^{1 - \frac{1}{q'}}(\Gamma_2)^3}
				\Big).
		\end{equation*}
	\end{theorem}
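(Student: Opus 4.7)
The plan is to treat the strong case $s=0$ and the weak case $s=-1$ in turn, with the latter derived from the former via duality and complex interpolation. In both cases I would first reduce to homogeneous Dirichlet data on $\Gamma_1$: using the surjectivity of the trace operator, pick a lift $\bu_1 \in \W{2+s}(\OM)^3$ with $\bu_1|_{\Gamma_1} = \bh^1$, then solve a Bogovskii-type problem
\begin{equation*}
  \Div \bu_2 = g - \Div \bu_1 \text{ in } \OM, \quad \bu_2|_{\Gamma_1} = 0,
\end{equation*}
in $\W{2+s}(\OM)^3$, which is solvable because $\Gamma_2$ is nonempty and no mean-value compatibility is required. Subtracting $\bu_1 + \bu_2$ from $\bu$ reduces \eqref{Eqs:linear-sta} to the case $\bh^1 = 0$, $g = 0$ with modified $\mathbf{f}$ and $\bh^2$ of the same regularity class.

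For the strong case ($s = 0$), I would invoke the resolvent result of Appendix~\ref{sec:sta-Stokes-lower-regularity} at zero resolvent parameter. By Remark~\ref{remark:epllipticity} the operator $-\Div D^2 W(\bbi) \nabla$ is strongly normally elliptic, and the mixed Dirichlet/Neumann boundary conditions satisfy the Lopatinskii--Shapiro complementing condition. Thanks to the nonempty Dirichlet part $\Gamma_1$, a Korn-type inequality holds on $W_{q, \Gamma_1}^{1}(\OM)^3$, and combined with the coercivity $D^2 W(\bbi) \nabla \bu : \nabla \bu \geq C_1 |\symm \nabla \bu|^2$ from Remark~\ref{remark:epllipticity} this forces the kernel of the homogeneous reduced problem to be trivial. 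Hence zero lies in the resolvent set, yielding existence, uniqueness, and the stated estimate in $\W{2}(\OM)^3 \times \W{1}(\OM)$.

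For the weak case ($s = -1$), I would use a duality argument. By frame indifference, $D^2 W(\bbi)$ is a symmetric fourth-order tensor, so the formal adjoint problem has the same structure with the roles of $q$ and $q'$ interchanged. For any pair $(\mathbf{F}, G) \in \Lq{q'}(\OM)^3 \times W_{q'}^{1}(\OM)$, the $s = 0$ result applied with $q \leftrightarrow q'$ yields a unique strong adjoint solution $(\bw, \sigma) \in W_{q'}^{2}(\OM)^3 \times W_{q'}^{1}(\OM)$ with homogeneous boundary data. Testing the reduced equation against $(\bw, \sigma)$ and integrating by parts gives
\begin{equation*}
  \int_{\OM} \bu \cdot \mathbf{F} \,\d \bX - \int_{\OM} \pi\, G \,\d \bX = \inner{\mathbf{f}}{\bw}_{W_{q,\Gamma_1}^{-1}(\OM)^3 \times W_{q',\Gamma_1}^{1}(\OM)^3} + \inner{\bh^2}{\bw|_{\Gamma_2}}_{W_q^{-1/q}(\Gamma_2)^3 \times W_{q'}^{1 - 1/q'}(\Gamma_2)^3},
\end{equation*}
whose right-hand side is controlled by $\NORM{\cF}_{-1}$ times $\norm{(\mathbf{F}, G)}_{\Lq{q'}(\OM)^3 \times W_{q'}^{1}(\OM)}$, thanks to the $s = 0$ estimate for $(\bw, \sigma)$. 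Riesz representation first yields a very weak solution $(\bu, \pi) \in \Lq{q}(\OM)^3 \times W_q^{-1}(\OM)$; complex interpolation between this very weak endpoint and the strong $s = 0$ isomorphism, using the identifications recorded in Section~\ref{sec:function-spaces}, then upgrades the regularity to $(\bu, \pi) \in \W{1}(\OM)^3 \times \Lq{q}(\OM)$ with the desired estimate. Uniqueness follows from density of the adjoint range.

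The main obstacle I anticipate is in the weak case: the direct duality from the $s = 0$ strong case only produces the very weak regularity $\Lq{q} \times W_q^{-1}$, one derivative short of the target $\W{1} \times \Lq{q}$. Bridging this gap through complex interpolation demands careful bookkeeping of compatible data and boundary pairings at the very weak endpoint, in particular verifying that $\bw|_{\Gamma_2} \in W_{q'}^{1 - 1/q'}(\Gamma_2)^3$ pairs correctly with $\bh^2 \in W_q^{-1/q}(\Gamma_2)^3$ and that the trace scale aligns with the interpolation scale on $s \in [-1, 0]$.
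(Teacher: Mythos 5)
Your $s=0$ route differs from the paper's in a benign way: you reduce $(g,\bh^1)$ via a trace lift plus a divergence--fixing map, then invoke the $\lambda=0$ resolvent; the paper instead multiplies the boundary data by a temporal cutoff, solves the associated \emph{parabolic} problem with maximal regularity, and averages $\int_{T/4}^{3T/4}(\tilde\bu,\tilde\pi)\,\d t$ to manufacture a stationary lift, after which it also reduces to the $\lambda=0$ resolvent problem in Appendix~\ref{sec:sta-Stokes-lower-regularity}. Your Korn argument for the triviality of the kernel at $\lambda=0$ is precisely the content of Remark~\ref{remark:lambda=0}, so this part is sound.

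The $s=-1$ part of your proposal has a genuine gap at exactly the place you flag. Your duality step is correct and produces a pair $(\bu,\pi)\in\Lq{q}(\OM)^3\times\W{-1}(\OM)$, but the final upgrade to $\W{1}(\OM)^3\times\Lq{q}(\OM)$ does not follow from abstract complex interpolation of two solution estimates alone. The issue is that the intermediate spaces you need are \emph{not} plain Sobolev spaces; they are interpolation spaces between $\Lqs{q}(\OM)$ and $\cD(\cA_q)$ (respectively $\cD(\cA_{q'})'$), which carry a boundary condition on $\Gamma_1$, a tangential Neumann condition on $\Gamma_2$, and a divergence constraint. Identifying $\bigl(\Lqs{q}(\OM),\cD(\cA_q)\bigr)_{[1/2]}=\cD(\cA_q^{1/2})=W^{1,q}_{\sigma,\Gamma_1}(\OM)$ requires that $\cA_q$ have bounded imaginary powers; the paper obtains this from Pr\"uss's $\mathcal{H}^\infty$-calculus result for generalized Stokes operators with mixed boundary conditions (cited as \cite{Pruess2019}) together with the interpolation-of-operators theorem from \cite{Schumacher2009}. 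Without invoking BIP / $\mathcal{H}^\infty$-calculus, there is no way to conclude that the interpolation scale aligns with the Sobolev scale for a Stokes-type operator under partial Dirichlet, partial Neumann conditions. A secondary issue is that you interpolate with the pair $(\bu,\pi)$ jointly, whereas the paper first passes to the divergence-free space $\Lqs{q}(\OM)$ (so the pressure and the constraint disappear from the interpolation), and then recovers $\pi\in\Lq{q}(\OM)$ separately via a very weak mixed Laplace problem \eqref{Eqs:Laplace-W-1}; interpolating the joint pair entangles the divergence constraint with the interpolation functor, which is delicate to justify. So the skeleton of your argument is fine, but the concluding interpolation step needs the BIP machinery to be a proof rather than a plan.
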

	\begin{proof}
		First let $ s = 0 $, we reduce the system \eqref{Eqs:linear-sta} to the case $ (g, \bh^1, \bh^2) = 0 $. To this end, take a cutoff funtion $ \psi \in C_0^\infty((0,T)) $ such that 
		\begin{equation*}
			\int_{T/4}^{3T/4} \psi(t) \,\d t = 1, \quad \text{in } [T/4, 3T/4].
		\end{equation*}
		Then
		\begin{gather*}
			\psi(t) g \in \Lq{p}(0,T; \W{1}(\OM)) \cap W_p^{1}(0,T; W_{q, \Gamma_2}^{-1}(\OM)), \\
			\psi(t) \bh^j \in \Lq{p}(0,T; \W{3 - j - \oneq}(\Gamma_j)^3) \cap W_p^{\frac{1}{j} - \onehalfq}(0,T; \Lq{q}(\Gamma_j)^3), \
			j = 1,2.
		\end{gather*}
		In view of Remark \ref{remark:epllipticity} and the maximal $ L^q $-regularity result for the generalized Stokes problems (e.g., \cite[Theorem 4.1]{BP2007}, Pr\"uss--Simonett \cite[Theorem 7.3.1]{PS2016}), we solve the system
		\begin{equation*}
			\begin{alignedat}{3}
				\pt \bu - \Div (D^2 W(\bbi) \nabla \bu) + \nabla \pi & = 0, && \tin \OM \times (0,T), \\
				\Div \bu & = \psi(t)g, && \tin \OM \times (0,T), \\
				\bu & = \psi(t)\bh^1, && \ton \Gamma_1 \times (0,T), \\
				(D^2 W(\bbi) \nabla \bu - \pi\bbi) \bn & = \psi(t)\bh^2, && \ton \Gamma_2 \times (0,T), \\
				\rv{\bu}_{t = 0} & = 0, && \tin \OM,
			\end{alignedat}
		\end{equation*}
		with $ 3 < p < \infty $, $ 1 < q < \infty $ to get a pair of solution $ (\tilde{\bu}, \tpi) $ fulfilling 
		\begin{equation*}
			\tilde{\bu} \in W_p^{1}(0,T; \Lq{q}(\OM)^3) \cap \Lq{p}(0,T; \W{2}(\OM)^3), \quad 
			\tpi \in \Lq{p}(0,T; \W{1}(\OM)).
		\end{equation*}
		Then one infers
		\begin{equation*}
			(\bar{\bu}, \barpi): = \int_{T/4}^{3T/4} (\tilde{\bu}, \tpi)(t) \,\d t \in \W{2}(\OM)^3 \times \W{1}(\OM),
		\end{equation*}
		and
		\begin{equation*}
			\Div \bar{\bu} = g, \tin \OM, \quad 
			\rv{\bar{\bu}}_{\Gamma_1} = \bh^1, \ton \Gamma_1, \quad
			\rv{(D^2 W(\bbi) \nabla \bar{\bu} - \barpi\bbi) \bn}_{\Gamma_2} = \bh^2, \ton \Gamma_2.
		\end{equation*}
		Subtracting the solution to \eqref{Eqs:linear-sta} with $ (\bar{\bu}, \barpi) $, we are in the position to solve \eqref{Eqs:linear-sta} with $ (g, \bh^1, \bh^2) = 0 $, which can be referred to Theorem \ref{thm:sta-stokes-lambda} with $ \lambda = 0 $. Note that the case $ \lambda = 0 $ is applicable due to Remark \ref{remark:lambda=0}.
		
		Now we consider $ s = -1 $, namely the weak solution. In this case we only reduce $ (g, \bh^1) $ to zero since the Neumann boundary trace need to make sense on $ \Gamma_2 $ correctly. Concerning the Stokes equation with Dirichlet boundary condition
		\begin{equation*}
			\begin{alignedat}{3}
				- \Delta \bu + \nabla \pi & = 0, && \tin \OM, \\
				\Div \bu & = g, && \tin \OM, \\
				\bu & = \bh^1, && \ton \Gamma_1, \\
				\bu & = \mathbf{c}, && \ton \Gamma_2,
			\end{alignedat}
		\end{equation*}
		where $ \mathbf{c} > 0 $ is a constant such that
		\begin{equation*}
			\int_{\OM} g \,\d x 
			= \int_{\Gamma_1} \bh^1 \cdot \bn \,\d \mathcal{H}^2
				+ \int_{\Gamma_2} \mathbf{c} \cdot \bn \,\d \mathcal{H}^2,
		\end{equation*}
		holds, where $ \mathcal{H}^d $ with $ d \in \bbn_+ $ denotes the $ d $-dimensional Hausdorff measure. It follows from the weak solution theory for stationary Stokes equation, see e.g. Galdi--Simader--Sohr \cite[Section 5, (5.12)]{GSS2005} in Sobolev spaces, Schumacher \cite[Theorem 4.3]{Schumacher2009} in weighted Bessel potential spaces, that one obtains a unique solution denoted by $ (\bar{\bu}, \barpi) $ such that
		\begin{equation*}
			(\bar{\bu}, \barpi) 
			\in \W{1}(\OM)^3 \times \Lq{q}(\OM),
		\end{equation*}
		and
		\begin{equation*}
			\Div \bar{\bu} = g, \tin \OM, \quad 
			\rv{\bar{\bu}}_{\Gamma_1} = \bh^1, \ton \Gamma_1.
		\end{equation*}
		Then one can subtract the solution of \eqref{Eqs:linear-sta} with $ (\bar{\bu}, \barpi) $ and solve \eqref{Eqs:linear-sta} with reduced data $ (g, \bh^1) = 0 $ and modified $ (f, \bh^2) $ (not to be relabeled). The idea of the proof is to introduce a $ \Lq{q} $-class of \textit{very weak solution} (see e.g. \cite{GSS2005,Schumacher2009}), so that one can derive a solution with certain regularity in $ \W{1}(\OM) $ by complex interpolation, see e.g. Schumacher \cite{Schumacher2009} for the stationary Stokes equation in fractional Bessel potential spaces. 
		
		Define the solenoidal space
		\begin{equation*}
			\Lqs{q}(\OM) := \left\{\bu \in \Lq{q}(\OM)^3: \Div \bu = 0, \rv{\bn \cdot \bu}_{\Gamma_1} = 0\right\}.
		\end{equation*}
		For $ 1 < q, q' < \infty $ satisfying $ 1/q + 1/q' = 1 $, we define a generalized Stokes-type operator with respect to \eqref{Eqs:linear-sta} as
		\begin{equation*}
			\cA_q(\bu) := \bbp_q \big( - \Div (D^2 W(\bbi) \nabla \bu) \big) \text{ for all } \bu \in \cD(\cA_q),
		\end{equation*}
		with
		\begin{equation*}
			\cD(\cA_q) = \left\{ \bu \in \W{2}(\OM)^3 \cap \Lqs{q}(\OM): \rv{\bu}_{\Gamma_1} = 0, \ \rv{\cP_{\bn}((D^2 W(\bbi) \nabla \bu) \bn)}_{\Gamma_2} = 0 \right\},
		\end{equation*}
		where $ \bbp_q $ denotes the \textit{Helmholtz--Weyl projection} onto $ \Lqs{q}(\OM) $, see e.g. \cite[Appendix A]{Abels2010} for the existence of the projection with mixed boundary conditions. $ \cP_{\bn} := \bbi - \bn \otimes \bn $ is the tangential projection onto $ \partial \Omega $. By the result of $ s = 0 $ we see that
		\begin{equation*}
			\cA_{q} : \cD(\cA_{q}) \rightarrow \Lqs{q}(\OM) 
		\end{equation*}
		is well-defined and bijective. Then one knows that its dual operator 
		\begin{equation*}
			\cA_{q'}^* : \Lqs{q'}(\OM)' \rightarrow \cD(\cA_{q'})' 
		\end{equation*}
		is bijective as well, which gives rise to the very weak solution. Note that $ \cA_{q} $ and $ \cA_{q'}^* $ are consistent, namely, 
		\begin{equation*}
			\begin{aligned}
				& \inner{\cA_{q'}^* \bu}{\bw}_{\cD(\cA_{q'})' \times \cD(\cA_{q'})}
				= \inner{\bu}{\cA_{q'} \bw}_{\Lqs{q}(\OM) \times \Lqs{q'}(\OM)} \\
				& \qquad = \int_\OM \nabla \bu : D^2 W(\bbi) \nabla \bw \,\d x 
				= \int_\OM D^2 W(\bbi) \nabla \bu : \nabla \bw \,\d x
				= \inner{\cA_{q} \bu}{\bw}_{\Lqs{q}(\OM) \times \Lqs{q'}(\OM)},
			\end{aligned}
		\end{equation*}
		for $ \bu \in \cD(\cA_{q}) \subseteq \Lqs{q}(\OM) $, $ \bw \in \cD(\cA_{q'}) \subseteq \Lqs{q'}(\OM) $, where $ (D^2 W(\bbi))_{ij}^{kl} = (D^2 W(\bbi))_{kl}^{ij} $, $ i,j,k,l = 1,2,3 $.
		Then by the complex interpolation of operators, e.g. \cite[Theorem 2.6]{Schumacher2009}, we record that
		\begin{equation*}
			\cA_q : \big(\Lqs{q}(\OM), \cD(\cA_{q})\big)_{\left[\onehalf\right]} \rightarrow
			\big(\Lqs{q}(\OM), \cD(\cA_{q'})'\big)_{\left[\onehalf\right]}
		\end{equation*}
		is bijective. Since $ \cA_q $ admits a bounded $ \mathcal{H}^\infty $-calculus and has bounded imaginary powers, see e.g. \cite[Theorem 1.1]{Pruess2019}, complex interpolation methods can be used to describe domains of fractional power operators. By virtue of \cite[Theorem 1.1]{Pruess2019} and \cite[Theorem 2.6]{Schumacher2009}, one obtains
		\begin{gather*}
			\big(\Lqs{q}(\OM), \cD(\cA_{q})\big)_{[\onehalf]} 
				= \cD(\cA_{q}^{1/2}) 
				= W_{\sigma,\Gamma_1}^{1,q}(\OM), \\
			\big(\Lqs{q}(\OM), \cD(\cA_{q'})'\big)_{\left[\onehalf\right]} 
				= (\Lqs{q}(\OM)', \cD(\cA_{q'}))_{\left[\onehalf\right]}' 
				= \cD(\cA_{q'}^{1/2})' 
				= W_{\sigma,\Gamma_1}^{-1,q}(\OM).
		\end{gather*}
		Consequently,
		\begin{equation*}
			\cA_q : W_{\sigma,\Gamma_1}^{1,q}(\OM) \rightarrow
			W_{\sigma,\Gamma_1}^{-1,q}(\OM)
		\end{equation*}
		is bijective, which implies there exists a unique solution $ \bu \in W_{\sigma,\Gamma_1}^{1,q}(\OM) $ such that
		\begin{equation*}
			\inner{\cA_{q} \bu}{\bw} = \inner{\cF}{\bw} \text{ for all } \bw \in W_{\sigma,\Gamma_1}^{1,q'}(\OM),
		\end{equation*}
		with $ \cF \in W_{\sigma,\Gamma_1}^{-1,q}(\OM) $ defined by
		\begin{equation*}
			\inner{\cF}{\bw} := \inner{\mathbf{f}}{\bw}_{W_{q,\Gamma_1}^{-1}(\OM) \times W_{q',\Gamma_1}^{1}(\OM)} + \inner{\bh^2}{\rv{\bw}_{\Gamma_2}}_{\W{- \oneq}(\Gamma_2) \times W_{q'}^{1 - \frac{1}{q'}}(\Gamma_2)},
		\end{equation*}
		for all $ \bw \in W_{\sigma,\Gamma_1}^{1,q'}(\OM) $. Moreover, by means of the open mapping theorem, one immediately deduce the estimate
		\begin{equation*}
			\norm{\bu}_{W_{\sigma, \Gamma_1}^{1,q}(\OM)^3} 
				\leq C 
					\NORM{\cF}_{-1},
		\end{equation*}
		in which 
		\begin{equation*}
			\NORM{\cF}_{-1}
				:= \sup_{\norm{\bw}_{W_{q',\Gamma_1}^{1}(\OM)^3} = 1}
					\Big(
						\inner{\mathbf{f}}{\bw}_{W_{q,\Gamma_1}^{-1}(\OM)^3 \times W_{q',\Gamma_1}^{1}(\OM)^3} + \inner{\bh^2}{\rv{\bw}_{\Gamma_2}}_{\W{- \oneq}(\Gamma_2)^3 \times W_{q'}^{1 - \frac{1}{q'}}(\Gamma_2)^3}
					\Big).
		\end{equation*}
		
		Up to now, one still needs to recover the pressure in the very weak sense, i.e., solving
		\begin{equation}
			\label{Eqs:Laplace-W-1}
			\int_{\OM} \pi \Delta \varphi \,\d x = \inner{F}{\varphi}, \quad \forall\, \varphi \in \cD(\Delta_{q',DN}),
		\end{equation}
		where
		\begin{gather*}
			\inner{F}{\varphi} 
				:= - \inner{\mathbf{f}}{\nabla \vp} + \int_{\OM}  D^2 W(\bbi) \nabla \bu : \nabla^2 \varphi
				+ \inner{\bh^2 \cdot \bn}{\rv{\ptial{\bn} \vp}_{\Gamma_2}}, \\
			\cD(\Delta_{q',DN}) 
				:= \left\{ \psi \in W_{q'}^2(\OM) : \rv{\ptial{\bn} \psi}_{\Gamma_1} = 0, \  \rv{\psi}_{\Gamma_2} = 0 \right\}.
		\end{gather*}
		Since $ \mathbf{f} \in W_{q, \Gamma_1}^{-1}(\OM)^3 $, $ \bu \in W_{q,\Gamma_1}^{1}(\OM)^3 $ and $ \bh^2 \in \W{- 1/q}(\Gamma_2)^3 $, it is easy to verify that functional $ F $ defined above is well-defined in $ \cD(\Delta_{q',DN})' $. For every $ u \in \Lq{q'}(\OM) $, it follows from \cite[Corollary 7.4.5]{PS2016} that there exists a unique solution $ \vp(u) \in \cD(\Delta_{q',DN}) $ satisfying $ \Delta \vp = u $. Now we define $ \pi \in \Lq{q}(\OM) $ by duality as a linear functional on $ \Lq{q'}(\OM) $ acting for every $ u $ as
		\begin{equation}
			\label{Eqs:dualfunctional}
			\inner{\pi}{u} = \inner{F}{\vp}.
		\end{equation}
		Indeed $ \pi $ is the very weak solution we are looking for, since for all $ \vp \in \cD(\Delta_{q',DN}) $, we have
		\begin{equation*}
			\inner{\pi}{\Delta \vp} = \inner{\pi}{u} = \inner{F}{\vp}.
		\end{equation*}
		The uniqueness can be showed by letting $ F = 0 $ in \eqref{Eqs:dualfunctional} so that for all $ u \in \Lq{q'}(\OM) $, $ \inner{\pi}{u} = 0 $, which implies $ \pi = 0 $ a.e. in $ \OM $. Then we have the estimate
		\begin{equation*}
			\norm{\pi}_{\Lq{q}(\OM)} 
				\leq C \norm{F}_{\cD(\Delta_{q',DN})'}
				\leq C \Big(
						\norm{\bu}_{\W{1}(\OM)^3}
						+ \NORM{\cF}_{-1}
					\Big).
		\end{equation*}
		This completes the proof.
	\end{proof}
	In Theorem \ref{thm:linear-sta}, we consider the general case of data. In fact, for applications the right-hand side terms sometimes have special structure, which is of much help to derive the estimate in a concise form.
	\begin{corollary}
		\label{coro:f=divF}
		In the case of $ s = -1 $, if there is an $ \bF \in \Lq{q}(\OM)^{3 \times 3} $ such that
		\begin{equation*}
			\mathbf{f} = \Div \bF, \text{ in } \OM, \quad
			\bh^2 = - \bF \bn, \text{ on } \Gamma_2
		\end{equation*}
		holds in the sense of distribution, i.e.,
		\begin{equation}
			\label{Eqs:f=divF}
			\inner{\mathbf{f}}{\bw}_{W_{q,\Gamma_1}^{-1}(\OM)^3 \times W_{q',\Gamma_1}^{1}(\OM)^3}
			+ \inner{\bh^2}{\rv{\bw}_{\Gamma_2}}_{\W{- \oneq}(\Gamma_2)^3 \times W_{q'}^{1 - \frac{1}{q'}}(\Gamma_2)^3}
			= \inner{\bF}{\nabla \bw},
		\end{equation} 
		for all $ \bw \in W_{\sigma,\Gamma_1}^{1,q'}(\OM) $, then the solution $ (\bu, \pi) $ in Theorem \ref{thm:linear-sta} satisfies
		\begin{equation*}
			\norm{\bu}_{\W{1}(\OM)^3} + \norm{\pi}_{\Lq{q}(\OM)} 
			\leq C \Big(
				\norm{g}_{\Lq{q}(\OM)} 
				+ \norm{\bh^1}_{\W{1 - \frac{1}{q}}(\Gamma_1)^3}
				+ \norm{\bF}_{\Lq{q}(\OM)^{3 \times 3}}
			\Big).
		\end{equation*}
	\end{corollary}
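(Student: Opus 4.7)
The plan is to reduce the corollary directly to Theorem \ref{thm:linear-sta} with $s=-1$ by estimating the abstract functional quantity $\NORM{\cF}_{-1}$ by the single norm $\norm{\bF}_{\Lq{q}(\OM)^{3\times 3}}$. First, I would apply Theorem \ref{thm:linear-sta} with $s=-1$ to the data $(\mathbf{f},g,\bh^1,\bh^2)$. This delivers the existence and uniqueness of the pair $(\bu,\pi)\in \W{1}(\OM)^3 \times \Lq{q}(\OM)$ together with the general estimate
\begin{equation*}
	\norm{\bu}_{\W{1}(\OM)^3}+\norm{\pi}_{\Lq{q}(\OM)}
	\leq C\Big(\norm{g}_{\Lq{q}(\OM)}+\norm{\bh^1}_{\W{1-\frac{1}{q}}(\Gamma_1)^3}+\NORM{\cF}_{-1}\Big),
\end{equation*}
so the only remaining task is to control $\NORM{\cF}_{-1}$ by $\norm{\bF}_{\Lq{q}}$.

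The key step is to exploit the divergence structure via the identity \eqref{Eqs:f=divF}. Given $\bw\in W_{\sigma,\Gamma_1}^{1,q'}(\OM)$, hypothesis \eqref{Eqs:f=divF} gives
\begin{equation*}
	\inner{\mathbf{f}}{\bw}_{W_{q,\Gamma_1}^{-1}\times W_{q',\Gamma_1}^{1}}
	+\inner{\bh^2}{\rv{\bw}_{\Gamma_2}}_{\W{-\oneq}(\Gamma_2)\times W_{q'}^{1-\frac{1}{q'}}(\Gamma_2)}
	=\inner{\bF}{\nabla\bw},
\end{equation*}
and H\"older's inequality immediately yields
\begin{equation*}
	\big|\inner{\mathbf{f}}{\bw}+\inner{\bh^2}{\rv{\bw}_{\Gamma_2}}\big|
	\leq \norm{\bF}_{\Lq{q}(\OM)^{3\times 3}}\,\norm{\nabla\bw}_{\Lq{q'}(\OM)}
	\leq \norm{\bF}_{\Lq{q}(\OM)^{3\times 3}}\,\norm{\bw}_{W_{q',\Gamma_1}^{1}}.
\end{equation*}
Since the pointwise identity \eqref{Eqs:f=divF}, interpreted as the distributional relations $\mathbf{f}=\Div\bF$ in $\OM$ and $\bh^2=-\bF\bn$ on $\Gamma_2$, involves only an integration by parts argument together with the vanishing of $\bw$ on $\Gamma_1$, the identity extends by the same calculation to arbitrary $\bw\in W^1_{q',\Gamma_1}(\OM)^3$ (i.e.\ one need not restrict to solenoidal test functions in order to bound the sup). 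Taking the supremum over $\bw$ with $\norm{\bw}_{W_{q',\Gamma_1}^{1}}=1$ gives $\NORM{\cF}_{-1}\leq \norm{\bF}_{\Lq{q}(\OM)^{3\times 3}}$, and substitution into the estimate above yields the claimed bound.

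The proof is essentially a bookkeeping exercise on top of Theorem \ref{thm:linear-sta}; the only mild subtlety I foresee is checking that the pairing identity is meaningful on the full test space $W^1_{q',\Gamma_1}$ (and not merely on its solenoidal subspace) so that it may be used both in the a priori estimate for $\bu$ and in the reconstruction of the pressure from the dual formulation \eqref{Eqs:dualfunctional}. This, however, follows directly from the definitions of $\mathbf{f}=\Div\bF$ and $\bh^2=-\bF\bn$ in the distributional/trace sense, so no additional analytical ingredient beyond Theorem \ref{thm:linear-sta} is required.
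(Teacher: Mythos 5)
Your proposal is correct and follows essentially the same route as the paper's proof: apply Theorem~\ref{thm:linear-sta} with $s=-1$ and reduce everything to bounding $\NORM{\cF}_{-1}$. The paper simply asserts $\NORM{\cF}_{-1} = \norm{\bF}_{\Lq{q}(\OM)^{3\times 3}}$; you instead carry out the H\"older step explicitly and obtain the (in fact more defensible) inequality $\NORM{\cF}_{-1}\le\norm{\bF}_{\Lq{q}}$, which is all that is needed, and you explicitly note the extension of \eqref{Eqs:f=divF} from solenoidal $\bw$ to the full test space on which the supremum defining $\NORM{\cF}_{-1}$ runs — a point the paper leaves implicit.
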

	\begin{proof}
		On account of \eqref{Eqs:f=divF} and the definition of $ \NORM{\cF}_{-1} $ above with respect to $ (\mathbf{f}, \bh^2) $, one has
		\begin{equation*}
			\NORM{\cF}_{-1} = \norm{\bF}_{\Lq{q}(\OM)^{3 \times 3}},
		\end{equation*}
		which gives birth to the desired estimate.
	\end{proof}
	\begin{remark}
		In fact, Theorem \ref{thm:linear-sta} can be generalize to $ s \in (-2,0) $ by employing complex interpolation. Namely, since $ \cA_q $ admits bounded imaginary powers, we have the domains of any fractional powers by complex interpolation
		\begin{equation*}
			\cD(\cA_{q}^{\theta}) 
			= \big(\Lqs{q}(\OM), \cD(\cA_{q})\big)_{[\theta]}, \quad 0 < \theta < 1.
		\end{equation*}
		More details can be found in e.g. \cite{Abels2010,Pruess2019,Schumacher2009}.
	\end{remark}

	Combining with the temporal regularities, Theorem \ref{thm:linear-sta} and Corollary \ref{coro:f=divF}, one arrives at the following theorem.
	\begin{theorem}
		\label{thm:linear-Stoke-stationary}
		Let $ 1 < q < \infty $ and $ T_0 > 0 $. Given $ (\mathbf{f}, g, \bh^1, \bh^2) $ such that 
		\begin{gather*}
			\mathbf{f} \in \bbf_{\mathbf{f}}(T) := \Lq{q}(0,T; \Lq{q}(\OM)^3) \cap \H{\onehalf}(0,T; W_{q, \Gamma_1}^{- 1}(\OM)^3), \\
			g \in \bbf_g(T) := \Lq{q}(0,T; \W{1}(\OM)) \cap \H{\onehalf}(0,T; \Lq{q}(\OM)),
				\\
			\bh^1 \in \bbf_{\bh^1}(T) := \Lq{q}(0,T; \W{2 - \oneq}(\Gamma_1)^3) \cap \H{\onehalf}(0,T; \W{1 - \oneq}(\Gamma_1)^3), \\
			\bh^2 \in \bbf_{\bh^2}(T) := \Lq{q}(0,T; \W{1 - \oneq}(\Gamma_2)^3) \cap \H{\onehalf}(0,T; \W{- \oneq}(\Gamma_2)^3).
		\end{gather*}
		Then \eqref{Eqs:linear-sta} admits a unique solution $ (\bu, \pi) \in \bbe(T) := \bbe_\bu(T) \times \bbe_\pi(T) $ where
		\begin{gather*}
			\bbe_\bu(T) := \Lq{q}(0,T; \W{2}(\OM)^3) \cap \H{\onehalf}(0,T; \W{1}(\OM)^3), \\
			\bbe_\pi(T) := \Lq{q}(0,T; \W{1}(\OM)) \cap \H{\onehalf}(0,T; \Lq{q}(\OM))
			.
		\end{gather*}
		If additionally, there is an $ \bF \in \Lq{q}(0,T; \W{1}(\OM)^{3 \times 3}) \cap \H{1/2}(0,T; \Lq{q}(\OM)^{3 \times 3}) $ such that
		\begin{equation*}
			\mathbf{f} = \Div \bF, \text{ in } \OM, \quad
			\bh^2 = - \bF \bn, \text{ on } \Gamma_2
		\end{equation*}
		holds in the sense of distribution. Then there is a constant $ C > 0 $ independent of $ \mathbf{f}, g, \bh^1, \bh^2, T $, such that for $ 0 < T < \infty $
		\begin{equation*}
			\norm{(\bu, \pi)}_{\bbe(T)} \leq C \Big(
				\norm{\bF}_{\Lq{q}(0,T; \W{1}(\OM)^{3 \times 3}) \cap \H{\onehalf}(0,T; \Lq{q}(\OM)^{3 \times 3})}
				+ \norm{g}_{\bbf_g(T)}
				+ \norm{\bh^1}_{\bbf_{\bh^1}(T)}
			\Big).
		\end{equation*}
	\end{theorem}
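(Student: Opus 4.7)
The plan is to exploit the quasi-stationary nature of the equation: there is no time derivative, so the PDE is purely elliptic in space with $t$ entering only as a parameter. Thus I would solve pointwise in $t \in (0,T)$ via Theorem \ref{thm:linear-sta} and then transfer the temporal regularity of the data to that of the solution through the linearity and boundedness of the spatial solution operator. Notice that the data space decomposes naturally into two pieces: the $\Lq{q}$-in-time parts match exactly the $s = 0$ spatial class of Theorem \ref{thm:linear-sta}, while the $\H{1/2}$-in-time parts match the $s = -1$ spatial class.

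For the $\Lq{q}$-in-time / spatial $\W{2}, \W{1}$ regularity, for almost every $t$ the data sits in the $s = 0$ class of Theorem \ref{thm:linear-sta}, which delivers a unique $(\bu(t), \pi(t)) \in \W{2}(\OM)^3 \times \W{1}(\OM)$ together with a pointwise estimate. Raising this estimate to the $q$-th power and integrating over $(0,T)$ via Fubini gives the $\Lq{q}(0,T; \W{2}) \cap \Lq{q}(0,T; \W{1})$ contribution. For the $\H{1/2}$-in-time / spatial $\W{1}, \Lq{q}$ regularity, the same data embeds pointwise into the $s = -1$ class, and the corresponding solution map is a bounded linear operator from the $s = -1$ data space into $\W{1}(\OM)^3 \times \Lq{q}(\OM)$. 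I would then invoke the standard Bochner-type fact that any bounded linear operator $\mathcal{S}:X \to Y$ extends, by acting pointwise in $t$, to a bounded operator $\H{1/2}(0,T;X) \to \H{1/2}(0,T;Y)$ with the same operator norm. Applying this with the $\H{1/2}$-regular data in the hypothesis yields $\bu \in \H{1/2}(0,T; \W{1})$ and $\pi \in \H{1/2}(0,T; \Lq{q})$. By the uniqueness statements in Theorem \ref{thm:linear-sta}, both pointwise constructions produce the same pair $(\bu, \pi)$, which therefore lies in $\bbe(T)$, and uniqueness in $\bbe(T)$ is immediate from the pointwise uniqueness.

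For the $T$-independent estimate under the divergence structure $\mathbf{f} = \Div \bF$, $\bh^2 = -\bF \bn$, I would invoke Corollary \ref{coro:f=divF} pointwise to get $\norm{\bu(t)}_{\W{1}} + \norm{\pi(t)}_{\Lq{q}} \leq C(\norm{\bF(t)}_{\Lq{q}} + \norm{g(t)}_{\Lq{q}} + \norm{\bh^1(t)}_{\W{1 - 1/q}(\Gamma_1)})$. After the Bochner lift in time, this yields the $\H{1/2}$-portion of the estimate in terms of $\bF$ alone, since the surface contribution of $\bh^2$ is absorbed into the $\bF$-bound via the formula \eqref{Eqs:f=divF}. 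The $\Lq{q}(0,T; \W{2}) \cap \Lq{q}(0,T; \W{1})$ portion follows from the $s = 0$ estimate, where $\mathbf{f} = \Div \bF \in \Lq{q}$ and $\bh^2 = -\bF\bn \in \W{1 - 1/q}(\Gamma_2)$ are both controlled by $\norm{\bF}_{\Lq{q}(0,T; \W{1})}$ through the trace theorem. Since the spatial constants in Theorem \ref{thm:linear-sta} and Corollary \ref{coro:f=divF} are genuinely $T$-independent and the Bochner lift preserves the operator norm exactly, the final constant is likewise independent of $T$.

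The main obstacle I expect is the rigorous execution of the Bochner lift to $\H{1/2}(0,T; \cdot)$ with operator norm preserved, particularly the careful interpretation of $\bh^2$ in the negative-order space $\W{-1/q}(\Gamma_2)^3$ via the duality pairing defining $\NORM{\cdot}_{-1}$, and verifying measurability of $t \mapsto (\bu(t),\pi(t))$ into the correct solution spaces. A secondary care-point is confirming that the two pointwise constructions ($s = 0$ and $s = -1$) yield exactly the same function $(\bu,\pi)$, so that the intersection structure of $\bbe(T)$ is genuinely realized by a single solution and not merely by two distinct pieces of data belonging separately to the two regularity classes.
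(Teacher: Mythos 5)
Your proposal is correct and matches the paper's intended argument; the paper gives no explicit proof and simply asserts that the result follows by ``combining with the temporal regularities, Theorem \ref{thm:linear-sta} and Corollary \ref{coro:f=divF},'' which is exactly the pointwise-in-$t$ application of the $s=0$ and $s=-1$ stationary solvability followed by the lift (identical operator norm via complex interpolation or the equivalence $\H{1/2}(0,T;X)=[\Lq{q}(0,T;X),\W{1}(0,T;X)]_{1/2}$) that you describe. The only point worth making explicit when writing this out is the identification of the two pointwise solutions via the embeddings $\W{2}\hookrightarrow\W{1}$, $\W{1}\hookrightarrow\Lq{q}$ together with uniqueness at level $s=-1$, which you already noted.
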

	Now given $ \gamma > 0 $ and $ c \in \Lq{q}(0,T; \W{2}(\OM)) \cap \W{1}(0,T; \Lq{q}(\OM)) $, one has the solvability of the system
	\begin{equation}
		\label{Eqs:linear-sta-c}
		\begin{alignedat}{3}
			- \Div (D^2 W(\bbi) \nabla \bu) + \nabla \pi & = \mathbf{f}, && \tin \OM, \\
			\Div \bu - \gamma \int_0^t c \,\d \tau & = g, && \tin \OM, \\
			\bu & = \bh^1, && \ton \Gamma_1, \\
			(D^2 W(\bbi) \nabla \bu - \pi\bbi) \bn & = \bh^2, && \ton \Gamma_2.
		\end{alignedat}
	\end{equation}
	\begin{corollary}
		Let $ \gamma > 0 $. Given $ c \in \Lq{q}(0,T; \W{2}(\OM)) \cap \W{1}(0,T; \Lq{q}(\OM)) $. Then under the assumptions of Theorem \ref{thm:linear-Stoke-stationary}, there is a unique solution $ (u, \pi) $ of \eqref{Eqs:linear-sta-c} satisfying
		\begin{gather*}
			(u, \pi) \in \bbe(T),
		\end{gather*}
	\end{corollary}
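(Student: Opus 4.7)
The plan is to reduce \eqref{Eqs:linear-sta-c} to an instance of \eqref{Eqs:linear-sta} by absorbing the integral term into the divergence datum, and then invoke Theorem \ref{thm:linear-Stoke-stationary} directly. Define
\begin{equation*}
    \tg(x,t) := g(x,t) + \gamma \int_0^t c(x,\tau)\, \d\tau,
\end{equation*}
so that $(\bu,\pi)$ solves \eqref{Eqs:linear-sta-c} with datum $(\mathbf{f}, g, \bh^1, \bh^2)$ if and only if it solves \eqref{Eqs:linear-sta} (in the time-dependent sense) with datum $(\mathbf{f}, \tg, \bh^1, \bh^2)$. Existence and uniqueness then follow from Theorem \ref{thm:linear-Stoke-stationary}, provided one checks that $\tg \in \bbf_g(T)$.

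The main step is therefore to verify that the map $c \mapsto \int_0^t c\, \d\tau$ sends $\Lq{q}(0,T; \W{2}(\OM)) \cap \W{1}(0,T; \Lq{q}(\OM))$ into $\bbf_g(T) = \Lq{q}(0,T; \W{1}(\OM)) \cap \H{\onehalf}(0,T; \Lq{q}(\OM))$. The spatial regularity is immediate: since $c \in \Lq{q}(0,T; \W{2}(\OM)) \hookrightarrow \Lq{q}(0,T; \W{1}(\OM))$, integrating in time gives $\int_0^t c \, \d\tau \in \W{1}(0,T; \W{1}(\OM)) \hookrightarrow \Lq{q}(0,T; \W{1}(\OM))$. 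For the temporal regularity, note that $\pt \int_0^t c\,\d\tau = c \in \Lq{q}(0,T; \Lq{q}(\OM))$, so $\int_0^t c\,\d\tau \in \W{1}(0,T; \Lq{q}(\OM))$, and Lemma \ref{lemma:timeembedding} yields the embedding $\W{1}(0,T; \Lq{q}(\OM)) \hookrightarrow \H{\onehalf}(0,T; \Lq{q}(\OM))$. Combined with $g \in \bbf_g(T)$, this gives $\tg \in \bbf_g(T)$.

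With $\tg \in \bbf_g(T)$ at hand, Theorem \ref{thm:linear-Stoke-stationary} produces a unique $(\bu,\pi) \in \bbe(T)$ solving the reformulated problem; by the equivalence noted above, $(\bu,\pi)$ is the unique solution of \eqref{Eqs:linear-sta-c} in $\bbe(T)$. Uniqueness transfers verbatim, since any two solutions of \eqref{Eqs:linear-sta-c} with the same data produce two solutions of \eqref{Eqs:linear-sta} with the same $\tg$. No genuine obstacle arises: the only non-routine point is matching the mixed $L^q$-$H^{1/2}$ time regularity of $\tg$, which is handled by the elementary embedding above without invoking the finer time-space embedding of Lemma \ref{lemma:time-space embedding}.
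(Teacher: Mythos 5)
Your proof is essentially the paper's: both verify $\gamma\int_0^t c\,\d\tau \in \bbf_g(T)$ and invoke Theorem~\ref{thm:linear-Stoke-stationary}, differing only in whether linearity is exploited by adding solutions (the paper solves \eqref{Eqs:linear-sta} separately with $(\mathbf{f},\bh^1,\bh^2)=0$ and $g$ replaced by $\gamma\int_0^t c\,\d\tau$ and adds the two solutions) or by folding the extra term into the divergence datum $\tg$ as you do. One small imprecision in your verification: Lemma~\ref{lemma:timeembedding} as stated yields $\W{1}(0,T;X) \hookrightarrow \W{r}(0,T;X)$ for $0<r<1$ rather than the needed Bessel-potential target $\H{\onehalf}(0,T;X)$, so you should either pass through some $\W{r}$ with $\onehalf<r<1$ and use the standard inclusion $\W{r}=B^r_{q,q}\hookrightarrow\H{\onehalf}$, or simply note that $\W{1}=\H{1}\hookrightarrow\H{\onehalf}$ on a bounded interval — the conclusion is unaffected.
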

	\begin{proof}
		Similar to \cite[Corollary 4.1]{AL2021b}, the only point we need to check is $ \gamma \int_0^t c \d \tau \in \bbf_g(T) $, which is not hard to verify thanks to the regularity of $ c $. Then solving \eqref{Eqs:linear-sta} with $ (\mathbf{f}, \bh^1, \bh^2) = 0 $ and $ g $ substituted by $ \gamma \int_0^t c \d \tau \in \bbf_g(T) $, adding the resulted solution and that of \eqref{Eqs:linear-sta}, one completes the proof.
	\end{proof}
	\begin{remark}
		In view of Theorem \ref{thm:linear-Stoke-stationary} and Lemma \ref{lemma:trace-time-regularity}, we know that 
		\begin{equation*}
			\rv{\big(D^2 W(\bbi) \nabla \bu - \pi \bbi\big) \bn}_{\Gamma_1} \in \Lq{q}(0,T; \W{1 - \oneq}(\Gamma_1)^3) \cap \W{\onehalf - \onehalfq}(0,T; \Lq{q}(\Gamma_1)^3)
		\end{equation*}
		makes sense, which contributes to the nonlinear estimate for the fluid part.
	\end{remark}
	\subsection{Heat equations with Neumann boundary condition} 
	\label{sec:heat-neumann}
	Let $ T > 0 $, $ \OM \subset \bbr^3 $ be a bounded domain with $ \partial \OM $ of class $ C^{3-} $. $ \nu $ denotes the unit outer normal vectors on $ \partial \OM $. Consider the problem
	\begin{equation}\label{Eqs:linear-heat}
		\begin{alignedat}{3}
			\pt u - D \Delta u & = f, && \tin \OM \times (0, T), \\
			D \nabla u \cdot \nu & = g, && \ton \partial \OM \times (0, T), \\
			\rv{u}_{t = 0} & = u_0, && \tin \OM,
		\end{alignedat}
	\end{equation}
	where $ D > 0 $ is a constant. $ u : \Bar{\OM} \times (0,T) \rightarrow \bbr $ stands for the system unknown, for example, the temperature, the concentration, etc. By e.g. \cite[Proposition A.2]{AL2021a}, the existence and uniqueness result reads as
	\begin{theorem}
		\label{parabolic: theorem}
		Let $ 3 < q < \infty $ and $ T_0 > 0 $. Assume that $ u_0 \in \W{2 - 2/q}(\OM) $ with the compatibility condition $ \rvm{D \nabla u_0}_{\partial \OM} = \rvm{g}_{t = 0} $ holds. Given known functions $ (f, g) $ with regularity 
		\begin{gather*}
			f \in \bbf_{f}(T) := \Lq{q}(0,T; \Lq{q}(\OM)), \\
			g \in \bbf_g(T) := \W{\onehalf - \onehalfq}(0,T; \Lq{q}(\partial \OM)) \cap \Lq{q}(0,T; \W{1 - \oneq}(\partial \OM)).
		\end{gather*}
		Then the parabolic equation \eqref{Eqs:linear-heat} admits a unique strong solution $ u \in \bbe(T) $ where 
		\begin{equation*}
			\bbe(T) := \Lq{q}(0,T; \W{2}(\OM)) \cap \W{1}(0,T; \Lq{q}(\OM)).
		\end{equation*}
		Moreover, there is a constant $ C > 0 $ independent of $ f, g, u_0, T_0 $, such that for $ 0 < T < T_0 $
		\begin{equation*}
			\norm{u}_{\bbe(T)} \leq C \left(
			\norm{f}_{\bbf_{f}(T)}
			+ \norm{g}_{\bbf_g(T)}
			+ \norm{u_0}_{\W{2 - 2/q}(\OM)}
		\right).
		\end{equation*}
	\end{theorem}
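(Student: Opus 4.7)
My plan is to prove Theorem \ref{parabolic: theorem} by reducing to the case of homogeneous initial and boundary data and then invoking the maximal $L^q$-regularity of the Neumann Laplacian. First I would construct an extension that removes the inhomogeneities. For the initial datum, using the fact that $u_0 \in \W{2 - 2/q}(\OM) = (\Lq{q}(\OM), \cD(\Delta_N))_{1 - 1/q, q}$ (where $\Delta_N$ is the Neumann Laplacian on $L^q$), one sets $u_1(t) = e^{tD\Delta_N}u_0$, which lies in $\bbe(T)$ with the estimate $\norm{u_1}_{\bbe(T)} \le C\norm{u_0}_{\W{2-2/q}(\OM)}$ uniformly in $T \le T_0$. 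For the boundary datum, I would use a right inverse of the anisotropic trace operator (Lemma \ref{lemma:trace-time-regularity}, or rather its Neumann analogue obtained by standard localization to the half-space) to produce $u_2$ with $\rv{D\nabla u_2 \cdot \nu}_{\partial\OM} = g - \rv{D \nabla u_1 \cdot \nu}_{\partial \OM}$ and $\rv{u_2}_{t=0}=0$; the compatibility condition $\rv{D\nabla u_0 \cdot \nu}_{\partial \OM} = \rv{g}_{t=0}$ is exactly what makes the right-hand side of this trace equation admissible (its value at $t=0$ vanishes, which is the compatibility condition needed by the anisotropic trace lift).

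After these reductions, the problem becomes $\pt w - D\Delta w = \tilde f$ in $\OM\times(0,T)$ with $\rv{D\nabla w\cdot\nu}_{\partial\OM}=0$ and $\rv{w}_{t=0}=0$, where $\tilde f \in \Lq{q}(0,T;\Lq{q}(\OM))$ satisfies a bound by the data. At this point I would invoke maximal $L^q$-regularity of the (negative) Neumann Laplacian on $L^q(\OM)$: this operator has a bounded $\cH^\infty$-calculus on $L^q(\OM)$ for $1<q<\infty$ (proved e.g. by Pr\"uss--Simonett using localization to the half-space and perturbation), hence by the Dore--Venni / Kalton--Weis theorem it has maximal $L^q$-regularity on any bounded interval. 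This yields a unique $w \in \bbe(T)$ with $\norm{w}_{\bbe(T)} \le C\norm{\tilde f}_{\Lq{q}(0,T;\Lq{q}(\OM))}$, and the constant $C$ is uniform for $0<T\le T_0$ because one may extend by zero into $(0,T_0)$ and use maximal regularity there.

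The main obstacle, and the only slightly delicate step, is the uniformity of the constant $C$ in $T$ (only for $T \le T_0$) together with the construction of the boundary-data lift $u_2$ with a $T$-uniform estimate. The cleanest route is to first extend $g - \rv{D\nabla u_1 \cdot \nu}_{\partial\OM}$ by reflection or by the standard $E_T$-extension operator to a function on the full interval $(0,T_0)$, apply a fixed $T_0$-version of the right inverse of the trace operator, and then restrict back. Since the trace space is of mixed Bessel-potential/Sobolev--Slobodeckij type, one needs the anisotropic trace theorem (Lemma \ref{lemma:trace-time-regularity} and its higher-order Neumann counterpart) with a $T$-independent constant; this is standard after transferring to the half-space by a partition of unity. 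Collecting the estimates of $u_1$, $u_2$ and $w$ yields the full estimate $\norm{u}_{\bbe(T)} \le C(\norm{f}_{\bbf_f(T)}+\norm{g}_{\bbf_g(T)}+\norm{u_0}_{\W{2-2/q}(\OM)})$, with $C$ independent of $T\in (0,T_0]$, and uniqueness follows from linearity once maximal regularity is established.
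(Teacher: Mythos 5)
Your overall strategy — reduce to homogeneous data by superposition, then invoke maximal $L^q$-regularity of the Neumann Laplacian — is sound, but the first reduction step has a genuine gap. You identify $u_0 \in \W{2-2/q}(\OM)$ with $(\Lq{q}(\OM), \cD(\Delta_N))_{1-1/q,q}$ and claim $u_1(t)=e^{tD\Delta_N}u_0 \in \bbe(T)$. For $3 < q < \infty$ one has $2-2/q > 1+1/q$, and by Grisvard's characterization the interpolation space $(\Lq{q}(\OM), \cD(\Delta_N))_{1-1/q,q}$ is then the \emph{strict} subspace $\{v \in \W{2-2/q}(\OM) : \rvm{\partial_\nu v}_{\partial\OM} = 0\}$. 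The assumed compatibility condition only requires $\rvm{D\nabla u_0 \cdot \nu}_{\partial\OM} = \rvm{g}_{t=0}$, which need not vanish, so $u_0$ generally does \emph{not} lie in that trace space and $e^{tD\Delta_N}u_0$ fails to belong to $\bbe(T)$: its time derivative is not $L^q$ near $t=0$ because the Neumann trace jumps from $\rvm{g}_{t=0}/D$ at $t=0$ to $0$ for $t>0$. Consequently the next step, subtracting $\rvm{D\nabla u_1\cdot\nu}_{\partial\OM}$ from $g$ and applying the anisotropic trace lift, is not available in the form you describe.

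The fix is to swap the order of the two lifts. First construct $u_2 \in \bbe(T)$ with $\rvm{D\nabla u_2\cdot\nu}_{\partial\OM} = g$ and with $\rvm{u_2}_{t=0}$ equal to some $w_0 \in \W{2-2/q}(\OM)$ satisfying $\rvm{D\nabla w_0\cdot\nu}_{\partial\OM} = \rvm{g}_{t=0}$ (compatible data for the right-inverse of the Neumann trace, with a $T$-independent extension to $(0,T_0)$). Then $u - u_2$ satisfies homogeneous Neumann data, and its initial value $u_0 - w_0$ has $\partial_\nu(u_0-w_0) = 0$ by the compatibility condition, hence lies in the correct trace space $(\Lq{q}, \cD(\Delta_N))_{1-1/q,q}$; now your semigroup argument and the maximal $L^q$-regularity apply. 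An alternative is to extend $u_0$ boundedly to $\W{2-2/q}(\bbr^3)$, flow it by the \emph{whole-space} heat semigroup (no boundary constraint on the trace space), and restrict. For comparison: the paper does not carry out this reduction at all — it simply cites the corresponding proposition from \cite{AL2021a}, which in turn rests on the Denk--Hieber--Pr\"uss / Pr\"uss--Simonett maximal-regularity characterization for inhomogeneous parabolic initial--boundary value problems; your approach is a direct re-derivation and, once the ordering is fixed, is a valid alternative.
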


	\subsection{Ordinary differential equations for foam cells and growth}
	Let $ \OM $ be the domain defined in Section \ref{sec:heat-neumann}. Given function $ f \in \bbf(T) := \Lq{q}(0,T; \W{1}(\OM)) $, a constant $ \gamma > 0 $ and a function $ u_0 \in \W{1}(\OM) $, $ u_0 \geq 0 $.
	Then by ordinary differential equation theory, 
	\begin{equation}
		\label{Eqs:growth-linear}
		\begin{aligned}
			\pt u - \gamma w = f, \quad & \text{in}\  \OM \times (0, T), \\
			\rv{u}_{t = 0} = u_0, \quad & \text{in}\  \OM.
		\end{aligned}
	\end{equation}
	admits a unique solution
	\begin{equation*}
		u \in \bbe(T) := \W{1}(0,T; \W{1}(\OM)),
	\end{equation*}
	provided $ w \in \Lq{q}(0,T; \W{2}(\OM)) \cap \W{1}(0,T; \Lq{q}(\OM)) $. Moreover, for every $ T_0 > 0 $, there exists a constant $ C > 0 $ independent of $ f, u_0, T_0 $, such that for $ 0 < T < T_0 $
	\begin{equation*}
		\norm{u}_{\bbe(T)}
		\leq C \left(\norm{f}_{\bbf(T)} + \norm{w}_{\Lq{q}(0,T; \W{2}(\OM)) \cap \W{1}(0,T; \Lq{q}(\OM))} + \norm{u_0}_{\W{1}(\OM)}\right).
	\end{equation*}
	
\section{Nonlinear well-posdeness}
	\label{sec:nonlinear-wpd}
	We denote by $ \delta $ a universal positive function
	\begin{equation}
		\label{Eqs:deltaT}
		\delta : \bbr_+ \rightarrow \bbr_+ \text{ such that } \delta(t) \rightarrow 0_+, \text{ as } t \rightarrow 0_+.
	\end{equation}
	The most common example in the present paper is $ \delta(t) = t^\theta $ for different $ \theta > 0 $ from line to line.
	
	\subsection{Auxiliary lemmas}
	\label{sec:useful-lemmas}
	In this section, we give some lemmas which we shall use later on.
	\begin{lemma}
		\label{lemma:Bessel-multiplication}
		Let $ f,g \in \H{\onehalf}(\bbr; \Lq{q}(\OM)) \cap \Lq{\infty}(\bbr; \Lq{\infty}(\OM)) \cap W^{\alpha}_{2q}(\bbr; L^{2q}(\Omega)) $ with $ q > 1 $ and $ 1/4 < \alpha < 1/2 $, then $ fg \in \H{\onehalf}(\bbr; \Lq{q}(\OM)) $ and
		\begin{align*}
			\norm{fg}_{\H{\onehalf}(\bbr; \Lq{q}(\Omega))} 
			& \leq C\Big(
				\norm{f}_{\H{\onehalf}(\bbr; \Lq{q}(\Omega))} \norm{g}_{\Lq{\infty}(\bbr; \Lq{\infty}(\Omega))} \\
				& \quad + \norm{g}_{\H{\onehalf}(\bbr; \Lq{q}(\Omega))} \norm{f}_{\Lq{\infty}(\bbr; \Lq{\infty}(\Omega))} 
				+ \norm{f}_{W^{\alpha}_{2q}(\bbr; L^{2q}(\Omega))} \norm{g}_{W^{\alpha}_{2q}(\bbr; L^{2q}(\Omega))}
			\Big).
		\end{align*}
		Moreover, if additionally $ \rvm{f}_{t = 0} = \rvm{g}_{t = 0} = 0 $, the assertion is true as well for all $ \bbr $ substituted by an interval $ (0,T) $ and the constant $ C > 0 $ in the estimate is independent of $ T > 0 $.
	\end{lemma}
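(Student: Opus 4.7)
The natural strategy is to exploit the pointwise fractional Leibniz identity for the square root of the time derivative (which, up to lower-order terms, generates the $H^{1/2}$-norm), combined with a commutator/paraproduct remainder estimate that accounts for the third term. First, I would reduce matters to bounding the homogeneous part $\|(-\partial_t^2)^{1/4}(fg)\|_{\Lq{q}(\bbr;\Lq{q}(\OM))}$, since the $\Lq{q}(\bbr;\Lq{q}(\OM))$-estimate of $fg$ is immediate from the $\Lq{\infty}(\bbr;\Lq{\infty}(\OM))$-bound on one of the factors.

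The key identity is
\[
	(-\partial_t^2)^{1/4}(fg)(t) = f(t)\,(-\partial_t^2)^{1/4}g(t) + g(t)\,(-\partial_t^2)^{1/4}f(t) + R[f,g](t),
\]
with remainder
\[
	R[f,g](t) = c_0 \int_\bbr \frac{(f(t)-f(s))(g(t)-g(s))}{\absm{t-s}^{3/2}}\,\d s,
\]
obtained from the singular integral representation of $(-\partial_t^2)^{1/4}$ together with the elementary splitting $fg(t)-fg(s) = f(t)(g(t)-g(s)) + (f(t)-f(s))g(s)$ and regrouping the second piece around $g(t)$. The two diagonal terms are handled at once: pointwise H\"older in $x$ gives $\|f(t)(-\partial_t^2)^{1/4}g(t)\|_{\Lq{q}(\OM)} \leq \|f(t)\|_{\Lq{\infty}}\|(-\partial_t^2)^{1/4}g(t)\|_{\Lq{q}}$, whose $\Lq{q}_t$-norm is bounded by $\|f\|_{\Lq{\infty}(\bbr;\Lq{\infty})}\|g\|_{\H{1/2}(\bbr;\Lq{q})}$, and the roles of $f,g$ are swapped symmetrically.

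The heart of the matter is the bound on $R[f,g]$. Applying H\"older in the $x$-variable with conjugate exponents $(2q,2q)$ and then Minkowski's integral inequality in $s$ yields the pointwise-in-$t$ estimate
\[
	\|R[f,g](t)\|_{\Lq{q}(\OM)} \leq c_0 \int_\bbr \frac{\|f(t)-f(s)\|_{\Lq{2q}(\OM)}\|g(t)-g(s)\|_{\Lq{2q}(\OM)}}{\absm{t-s}^{3/2}}\,\d s.
\]
Splitting the denominator as $\absm{t-s}^{3/2} = \absm{t-s}^{(1+2q\alpha)/(2q)}\cdot\absm{t-s}^{(1+2q\alpha)/(2q)}\cdot\absm{t-s}^{3/2-(1+2q\alpha)/q}$ and applying H\"older in $s$ with exponents $(2q,2q, r)$ so that the first two factors reconstruct the Slobodeckij integrands of $[f]_{W^\alpha_{2q}}$ and $[g]_{W^\alpha_{2q}}$ pointwise, followed by a final integration in $t$, produces the desired bound, precisely because the remaining exponent $3/2 - 2(1+2q\alpha)/(2q) = 1 - 2\alpha$ is admissible exactly when $\alpha > 1/4$ (this is where the critical range is used). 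Collecting the three contributions gives the claimed estimate on $\bbr$.

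The hardest step is the remainder estimate: the bookkeeping of integrability exponents in the H\"older chain must match the $W^\alpha_{2q}$-seminorm while the outer $\Lq{q}_t$-integration remains finite, and the range $1/4 < \alpha < 1/2$ is exactly what makes this bilinear estimate critical. Finally, the variant on a bounded interval $(0,T)$ for functions with vanishing trace at $t=0$ follows by extending $f,g$ by zero to the negative half-line: because the traces vanish, the extensions belong to the same function classes on $\bbr$ with norms controlled uniformly in $T$, and the previous argument applies verbatim with a $T$-independent constant.
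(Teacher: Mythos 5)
Your overall strategy — the singular-integral representation of the fractional time derivative, a Leibniz-type splitting of $\Delta_h(fg)$, H\"older for the diagonal terms, and Cauchy--Schwarz to close with $W^\alpha_{2q}$-seminorms — is the same as the paper's. However, the remainder estimate has a genuine gap. After applying H\"older in $s$ with exponents $(2q,2q,q')$, the third factor of your splitting of $\absm{t-s}^{-3/2}$ produces a power-law kernel integrated over \emph{all} of $\bbr$; no single power makes $\int_\bbr \absm{t-s}^{-\mu}\,\d s$ converge both as $\absm{t-s}\to 0$ and as $\absm{t-s}\to\infty$, so as written the bound for $R[f,g]$ diverges. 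The paper handles this by decomposing the remainder integral into $\absm{h}\leq 1$ and $\absm{h}>1$: the near part yields the $W^{\alpha}_{2q}$ contribution (your H\"older--Slobodeckij idea applies there, and the condition $\alpha>1/4$ governs convergence), while the far part is controlled by $\norm{f}_{L^{2q}(\bbr;L^{2q})}\norm{g}_{L^{2q}(\bbr;L^{2q})}$ via the embedding $W^{\alpha}_{2q}\hookrightarrow L^{2q}$ and the absolute convergence of $\int_{\absm{h}>1}\absm{h}^{-3/2}\,\d h$. You need this dyadic split.

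Secondarily, the exponent you report, $3/2 - 2(1+2q\alpha)/(2q) = 1-2\alpha$, is only correct when $q=2$; in general it equals $3/2 - 1/q - 2\alpha$, and the relevant admissibility condition after multiplying by $q'$ is $q'(3/2 - 1/q - 2\alpha) < 1$, which reduces to $\alpha > 1/4$. Finally, the interval case is sketched too loosely: extending by zero to the negative half-line only produces an extension to $(-\infty,T)$; you must also extend past $T$ to invoke the estimate on $\bbr$, and to control all three norms ($H^{1/2}_q$, $L^\infty$, $W^\alpha_{2q}$) you need a genuinely $T$-uniform extension operator on the $\KO{s}$-scale — this is precisely why the paper invokes the Meyries--Schnaubelt extension operator rather than ad hoc zero extension.
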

	\begin{proof}
		First let us recall the equivalent definition of Bessel potential space 
		\begin{equation*}
			\normm{f}_{\H{s}(\bbr; \Lq{q}(\OM))} = \normm{f}_{\Lq{q}(\bbr; \Lq{q}(\OM))} + \normm{(- \Delta)^{\frac{s}{2}}f}_{\Lq{q}(\bbr; \Lq{q}(\OM))},
		\end{equation*}
		where the fractional Laplace operator is represented by the singular integral
		\begin{equation*}
			(- \Delta)^{\frac{s}{2}} f(t) 
			= C_s \lim_{\epsilon \rightarrow 0} \int_{\abs{h} \geq \epsilon} \frac{\Delta_h f(t)}{\abs{h}^{1 + s}} \d h,
		\end{equation*}
		with $ C_s > 0 $ a constant depending on $ s $, $ 0 < s < 2 $ and $ \Delta_h f(t) := f(t) - f(t - h) $, see e.g. \cite[Chapter V, Section 6.10]{Stein1970}.
		For $ f,g \in H^{1/2}_q(\bbr; L^q(\Omega)) \cap L^{\infty}(\bbr; L^{\infty}(\Omega)) \cap W^{\alpha}_{2q}(\bbr; L^{2q}(\Omega)) $ with $ q > 1 $ and $ 1/4 < \alpha < 1/2 $, we see that the integrand has an algebraic decay rate greater than one with respect to $ \abs{h} $, which means the singular integral is actually integrable and one can omit the ``$ \lim $'' in the following. Then we have
		\begin{align*}
			& \norm{(- \Delta)^{\onequater}(fg)(t)}_{L^q(\Omega)}
			= C \bigg\|\int_{\bbr} \frac{\Delta_h (fg)(t)}{\abs{h}^{1 + \onehalf}} \,\d h \bigg\|_{L^q(\Omega)} \\
			& = C \bigg\| 
				g(t) \int_{\bbr} \frac{\Delta_h f(t)}{\abs{h}^{1 + \onehalf}} \,\d h
				+ \int_{\bbr} \frac{f(t) \Delta_h g(t)}{\abs{h}^{1 + \onehalf}} \,\d h 
				+ \int_{\bbr} \frac{\big(\Delta_h f(t)\big) \big(\Delta_h g(t)\big)}{\abs{h}^{1 + \onehalf}} \,\d h 
			\bigg\|_{L^q(\Omega)} \\
			& \leq C \underbrace{\left(
					\norm{g(t)(- \Delta)^{\onequater}f(t)}_{L^q(\Omega)} + \norm{f(t)(- \Delta)^{\onequater}g(t)}_{L^q(\Omega)}
				\right)}_{\leq \norm{g(t)}_{L^{\infty}(\Omega))} \normm{(- \Delta)^{\onequater}f(t)}_{L^q(\Omega)} + \norm{f(t)}_{L^{\infty}(\Omega))} \normm{(- \Delta)^{\onequater}g(t)}_{L^q(\Omega)}} \\
			& \qquad \qquad + C \underbrace{\bigg\| \int_{\bbr} \frac{\big(\Delta_h f(t)\big) \big(\Delta_h g(t)\big)}{\abs{h}^{1 + \onehalf}} \,\d h \bigg\|_{L^q(\Omega)}}_{=:I(t)}.
		\end{align*}
		Dividing the region $ \bbr $ into a neighborhood of the origin and its complement, we have
		\begin{align*}
			I(t) & \leq \int_{\abs{h} \leq 1} \norm{\big(\Delta_h f(t)\big) \big(\Delta_h g(t)\big)}_{L^q(\Omega)} \frac{1}{\abs{h}^{1 + \onehalf}} \,\d h \\
			& \qquad + \int_{\abs{h} > 1} \norm{\big(\Delta_hf(t)\big) \big(\Delta_h g(t)\big)}_{L^q(\Omega)} \frac{1}{\abs{h}^{1 + \onehalf}} \,\d h =: I_1(t) + I_2(t),
		\end{align*}
		where 
		\begin{align*}
			I_1(t) & \leq C \int_{\abs{h} \leq 1} \norm{\Delta_h f(t)}_{L^{2q}(\Omega)}\norm{\Delta_h g(t)}_{L^{2q}(\Omega)} \frac{1}{\abs{h}^{1 + \onehalf}} \,\d h \\
			& = C \int_{\abs{h} \leq 1} \norm{\Delta_h f(t)}_{L^{2q}(\Omega)} \norm{\Delta_h g(t)}_{L^{2q}(\Omega)} \left(\frac{1}{\abs{h}^{1 + \frac{q}{2} + \varepsilon \frac{q}{q'}}}\right)^{\frac{1}{q}} \left(\frac{1}{\abs{h}^{1 - \varepsilon}}\right)^{\frac{1}{q'}} \,\d h \\
			& \leq C \left(\int_{\abs{h} \leq 1} \norm{\Delta_h f(t)}_{L^{2q}(\Omega)}^q \norm{\Delta_h g(t)}_{L^{2q}(\Omega)}^q \frac{\d h}{\abs{h}^{1 +  \frac{q}{2} + \varepsilon \frac{q}{q'}}}\right)^{\oneq} \underbrace{\left(\int_{\abs{h} \leq 1} \abs{h}^{-1 + \varepsilon} \,\d h \right)^{\frac{1}{q'}}}_{\leq C_\varepsilon},
		\end{align*}
		for every $ \varepsilon > 0 $ and $ 1/q + 1/q' = 1 $. By the H\"older's inequality,
		\begin{align*}
			\int_{\bbr} \abs{I_1(t)}^q \,\d t
			& \leq C_\varepsilon \int_{\bbr} \int_{\abs{h} \leq 1} \norm{\Delta_h f(t)}_{L^{2q}(\Omega)}^q \norm{\Delta_h g(t)}_{L^{2q}(\Omega)}^q \frac{\d h \d t}{\abs{h}^{1 +  \frac{q}{2} + \varepsilon \frac{q}{q'}}} \\
			& \leq C_\varepsilon \left( \int_{\bbr} \int_{\abs{h} \leq 1} \frac{\norm{\Delta_h f(t)}_{L^{2q}(\Omega)}^{2q}}{\abs{h}^{1 +  \frac{q}{2} + \varepsilon \frac{q}{q'}}} \,\d h \d t \right)^\onehalf \left( \int_{\bbr} \int_{\abs{h} \leq 1} \frac{\norm{\Delta_h g(t)}_{L^{2q}(\Omega)}^{2q}}{\abs{h}^{1 +  \frac{q}{2} + \varepsilon \frac{q}{q'}}} \,\d h \d t\right)^\onehalf \\
			& \leq C_\varepsilon \norm{f}_{W^{\onequater + \frac{\varepsilon}{2 q'}}_{2q}(\bbr; L^{2q}(\Omega))}^q \norm{g}_{W^{\onequater + \frac{\varepsilon}{2 q'}}_{2q}(\bbr; L^{2q}(\Omega))}^q.
		\end{align*}
		Moreover,
		\begin{align*}
			\int_{\bbr} \abs{I_2(t)}^q \,\d t & \leq C \int_{\bbr} \norm{f(t)}_{L^{2q}(\Omega)}^q \norm{g(t)}_{L^{2q}(\Omega)}^q \left(\int_{\abs{h} > 1} \frac{1}{\abs{h}^{1 + \onehalf}} \,\d h\right)^q \,\d t \\
			& \leq C \int_{\bbr} \norm{f(t)}_{L^{2q}(\Omega)}^q \norm{g(t)}_{L^{2q}(\Omega)}^q \,\d t \leq C \norm{f}_{L^{2q}(\bbr; L^{2q}(\Omega))}^q \norm{g}_{L^{2q}(\bbr; L^{2q}(\Omega))}^q.
		\end{align*}
		Combining the estimate
		\begin{equation*}
			\norm{fg}_{\Lq{q}(\bbr; \Lq{q}(\OM))} \leq \norm{f}_{\Lq{q}(\bbr; \Lq{q}(\OM))} \norm{g}_{\Lq{\infty}(0,T; \Lq{\infty}(\OM))}
			\leq C \norm{f}_{\H{\onehalf}(\bbr; \Lq{q}(\OM))} \norm{g}_{\Lq{\infty}(\bbr; \Lq{\infty}(\OM))},
		\end{equation*}
		we conclude that for $ 1/4 < \alpha < 1/2 $,
		\begin{align*}
			\norm{fg}_{H^{1/2}_q(\bbr; L^q(\Omega))} 
			& \leq C\Big(
			\norm{f}_{H^{1/2}_q(\bbr; L^q(\Omega))} \norm{g}_{L^{\infty}(\bbr; L^{\infty}(\Omega))} \\
			& \quad + \norm{g}_{H^{1/2}_q(\bbr; L^q(\Omega))} \norm{f}_{L^{\infty}(\bbr; L^{\infty}(\Omega))}
			+ \norm{f}_{W^{\alpha}_{2q}(\bbr; L^{2q}(\Omega))} \norm{g}_{W^{\alpha}_{2q}(\bbr; L^{2q}(\Omega))}
			\Big).
		\end{align*}
		Now denoting by $ \cE $ the extension operator for Bessel potential spaces with vanishing initial value from \cite[Lemma 2.5]{MS2012}, one arrives at
		\begin{align*}
			& \norm{fg}_{\HO{\onehalf}(0,T; L^q(\Omega))} 
			= \norm{\cE(f)\cE(g)}_{\HO{\onehalf}(0,T; \Lq{q}(\OM))}
			\leq C \norm{\cE(f)\cE(g)}_{\HO{\onehalf}(\bbr; \Lq{q}(\OM))} \\
			& \leq C \Big(
			\norm{\cE(f)}_{\H{\onehalf}(\bbr; L^q(\Omega))} \norm{\cE(g)}_{L^{\infty}(\bbr; L^{\infty}(\Omega))} + \norm{\cE(g)}_{\H{\onehalf}(\bbr; L^q(\Omega))} \norm{\cE(f)}_{L^{\infty}(\bbr; L^{\infty}(\Omega))} \\
			& \quad \qquad 
			+ \norm{\cE(f)}_{W^{\alpha}_{2q}(\bbr; L^{2q}(\Omega))} \norm{\cE(g)}_{W^{\alpha}_{2q}(\bbr; L^{2q}(\Omega))}
			\Big) \\
			& \leq C\Big(
			\norm{f}_{\HO{\onehalf}(0,T; L^q(\Omega))} \norm{g}_{L^{\infty}(0,T; L^{\infty}(\Omega))} \\
			& \quad \qquad + \norm{g}_{\HO{\onehalf}(0,T; L^q(\Omega))} \norm{f}_{L^{\infty}(0,T; L^{\infty}(\Omega))}
			+ \norm{f}_{W^{\alpha}_{2q}(0,T; L^{2q}(\Omega))} \norm{g}_{W^{\alpha}_{2q}(0,T; L^{2q}(\Omega))}
			\Big),
		\end{align*}
		where the constant $ C > 0 $ is independent of $ T $.
	\end{proof}
	\begin{lemma}
		\label{lemma:fF} 
		Let $ \OM \subset \bbr^3 $ be a bounded domain with $ C^1 $ boundary, $ R > 0 $, $ T > 0 $ and $ 5 < q < \infty $. Given
		\begin{gather*}
			f \in X :=  \H{\onehalf}(0,T; \Lq{q}(\OM)) \cap \Lq{q}(0,T; \W{1}(\OM))
		\end{gather*}
		with $ \norm{f}_{X} \leq R $. Then $ f \in \Lq{\infty}(0,T; \Lq{\infty}(\OM)) $ and there exists some $ 1/4 < s < 1/2 - 5/4q $ such that $ f \in W^{s}_{2q}(0,T; L^{2q}(\Omega)) $. In addition,
		\begin{align}
			\label{Eqs:f-Linfty}
			& \norm{f}_{\Lq{\infty}(0,T; \Lq{\infty}(\OM))} \leq C \delt, \\
			\label{Eqs:f-W-alpha}
			& \norm{f}_{W^{s}_{2q}(0,T; \Lq{2q}(\OM))} \leq C,
		\end{align}
		provided $ \rvm{f}_{t = 0} = 0 $, where $ C > 0 $ depends on $ R $. Moreover, for $ f^1, f^2, g \in X $ with $ \rvm{(f^1 - f^2)}_{t = 0} = 0 $, $ \rvm{g}_{t = 0} = 0 $ and $ \norm{(f^i, g)}_{X \times X} \leq R $, $ i \in \{1,2\} $,
		\begin{align}
			\label{Eqs:f-Linfty-difference}
				\norm{f^1 - f^2}_{\Lq{\infty}(0,T; \Lq{\infty}(\OM))} \leq C \delt \norm{f^1 - f^2}_X, \\
				\label{Eqs:f2bF-W12Lq-difference}
				\norm{(f^1 - f^2) g}_X  
				\leq C \delt \norm{f^1 - f^2}_X,
		\end{align}
		where $ C > 0 $ depends on $ R $.
	\end{lemma}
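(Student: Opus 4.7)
The plan is to derive all four estimates from embeddings of the anisotropic space $X = \H{\onehalf}(0,T; \Lq{q}(\OM)) \cap \Lq{q}(0,T; \W{1}(\OM))$, combined with Lemma~\ref{lemma:Bessel-multiplication}. The hypothesis $q > 5$ provides just enough scaling room to embed $X$ simultaneously into $C([0,T]; \Lq{\infty}(\OM))$ and into $W^{s}_{2q}(0,T; \Lq{2q}(\OM))$ for the prescribed range of $s$.

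For \eqref{Eqs:f-Linfty}, Lemma~\ref{lemma:time-space embedding} gives $X \hookrightarrow \H{r}(0,T; \W{1-2r}(\OM))$ for any $0 < r < 1/2$. Since $q > 5$, one may pick $r$ with $1/q < r < 1/2 - 3/(2q)$, so that simultaneously $rq > 1$ (Bessel-to-continuous embedding in time by Lemma~\ref{lemma:timeembedding-continuous}) and $(1-2r)q > 3$ (Sobolev embedding $\W{1-2r}(\OM) \hookrightarrow \Lq{\infty}(\OM)$). Using $\rv{f}_{t=0} = 0$ and the $T$-independent embedding constants in the $_0H$-versions, Lemma~\ref{lemma:timeembedding-continuous} then produces the factor $T^{\delta}$, yielding $\norm{f}_{\Lq{\infty}(0,T; \Lq{\infty}(\OM))} \leq C T^{\delta} \norm{f}_X$. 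Estimate \eqref{Eqs:f-Linfty-difference} follows by applying the same bound to the difference $f^1 - f^2$, which also vanishes at $t=0$. For \eqref{Eqs:f-W-alpha} I would choose $s' = 1/2 - 3/(4q)$ in Lemma~\ref{lemma:time-space embedding} to obtain $X \hookrightarrow \H{s'}(0,T; \W{1-2s'}(\OM))$, then compose with the spatial Sobolev embedding $\W{1-2s'}(\OM) \hookrightarrow \Lq{2q}(\OM)$ and the scalar temporal Sobolev embedding $\H{s'}(\bbr) \hookrightarrow W^{s}_{2q}(\bbr)$, which is valid whenever $s \leq s' - 1/(2q) = 1/2 - 5/(4q)$.

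The central task is the product estimate \eqref{Eqs:f2bF-W12Lq-difference}. Writing $h = f^1 - f^2$ and splitting $\norm{hg}_X$ into its $\Lq{q}(0,T; \W{1}(\OM))$ and $\H{\onehalf}(0,T; \Lq{q}(\OM))$ components, the first component is handled by Leibniz's rule and H\"older's inequality, with each resulting product carrying a $\delta(T)$-factor through \eqref{Eqs:f-Linfty-difference} or \eqref{Eqs:f-Linfty} applied to $g$. For the second component I would invoke Lemma~\ref{lemma:Bessel-multiplication}: the first two terms directly absorb a $\delta(T)$-factor from $\norm{g}_{\Lq{\infty}(\Lq{\infty})}$ or $\norm{h}_{\Lq{\infty}(\Lq{\infty})}$ via \eqref{Eqs:f-Linfty}. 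The main obstacle is the cross-term $\norm{h}_{W^{s}_{2q}(\Lq{2q})} \norm{g}_{W^{s}_{2q}(\Lq{2q})}$, for which \eqref{Eqs:f-W-alpha} alone supplies no $\delta(T)$-factor. I would resolve this by exploiting the strict inequality $s < 1/2 - 5/(4q)$: pick $\tilde{s} \in (s, 1/2 - 5/(4q))$, so that Lemma~\ref{lemma:timeembedding} gives $[g]_{W^{s}_{2q}(\Lq{2q})} \leq T^{\tilde{s} - s} \norm{g}_{W^{\tilde{s}}_{2q}(\Lq{2q})}$, while the $\Lq{2q}(\Lq{2q})$ part of $\norm{g}_{W^{s}_{2q}(\Lq{2q})}$ is controlled by $T^{1/(2q)} \abs{\OM}^{1/(2q)} \norm{g}_{\Lq{\infty}(\Lq{\infty})}$ and therefore also carries a $\delta(T)$-factor via \eqref{Eqs:f-Linfty}. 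This places the needed smallness factor on the $g$-slot and completes \eqref{Eqs:f2bF-W12Lq-difference}.
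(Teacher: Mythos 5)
Your argument is correct and follows essentially the same route as the paper: both use Lemma~\ref{lemma:time-space embedding} to place $f$ in an intermediate anisotropic space, pass to $C([0,T];C(\Bar\OM))$ and to $W^{s}_{2q}(0,T;L^{2q}(\OM))$ via the same Sobolev embeddings, and then feed Lemma~\ref{lemma:Bessel-multiplication} with a strict gap in the $W^{\alpha}_{2q}$ index and Lemma~\ref{lemma:timeembedding} to extract the $\delta(T)$-factor from the cross-term. The only cosmetic differences are that you pick generic exponents where the paper fixes $r=1/5$ for the $L^\infty$ embedding, and you put the $T$-power on $g$ via an auxiliary $\tilde s>s$ where the paper lowers $\alpha$ strictly below $s$ for both factors — the same mechanism, relabelled.
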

	\begin{proof}
		For $ f \in X $, by Lemma \ref{lemma:timeembedding-continuous} and \ref{lemma:time-space embedding}, we have
		\begin{equation*}
			\H{\onehalf}(0,T; \Lq{q}(\OM)) \cap \Lq{q}(0,T; \W{1}(\OM))
			\hookrightarrow \H{\frac{1}{5}}(0,T; \H{\frac{3}{5}}(\OM))
			\hookrightarrow C([0,T]; C(\Bar{\OM})),
		\end{equation*}
		for $ q > 5 $, which means $ f \in \Lq{\infty}(0,T; \Lq{\infty}(\Omega)) $. If $ \rvm{f}_{t = 0} = 0 $, the first embedding constant above is uniform with regard to $ T $ and it follows from Lemma \ref{lemma:timeembedding-continuous} that \eqref{Eqs:f-Linfty} holds true, as well as \eqref{Eqs:f-Linfty-difference}.
		By means of the time-space embedding Lemma \ref{lemma:time-space embedding} again, one has
		\begin{equation*}
			\HO{\onehalf}(0,T; L^q(\Omega)) \cap L^q(0,T; W^1_q(\Omega))
			\hookrightarrow \HO{r}(0,T; W^{1 - 2r}_q(\Omega)),
		\end{equation*}
		for $ 1/q < r < 1/2 $, where the embedding constant does not depend on $ T > 0 $. By virtue of the embeddings $ W^{s}_q(\Omega) \hookrightarrow L^{2q}(\Omega) $ for $ s - 3/q \geq - 3/2q $ and $ H^s_q(0,T; X) \hookrightarrow W^{s - 1/2q}_{2q}(0,T; X) $, one infers the conditions
		\begin{equation*}
			1 - 2r - \frac{3}{q} \geq - \frac{3}{2q},
			\quad 
			r - \frac{1}{2q} > \frac{1}{4}.
		\end{equation*}
		Combining the inequalities above together yields that $ r $ should satisfies
		\begin{equation*}
			\frac{1}{4} + \frac{1}{2q} < r \leq \frac{1}{2} - \frac{3}{4q}.
		\end{equation*}
		Since $ q > 5 $, it is easy to verify that $ \frac{1}{2} - \frac{3}{4q} > \frac{1}{4} + \frac{1}{2q} $, which means that such $ r $ does exist and $ f \in W^{r - 1/2q}_{2q}(0,T; L^{2q}(\Omega)) $. 
		
		In addition, by means of Lemma \ref{lemma:Bessel-multiplication}, one obtains for all $ 1/4 < \alpha < 1/2 $,
		\begin{align*}
			& \norm{(f^1 - f^2) g}_{\H{\onehalf}(0,T; \Lq{q}(\Omega))} \\
			& \quad \leq C \big(\underbrace{\norm{f^1 - f^2}_{\Lq{\infty}(0,T; \Lq{\infty}(\OM))} \norm{g}_X 
			+ \norm{f^1 - f^2}_{X} \norm{g}_{\Lq{\infty}(0,T; \Lq{\infty}(\OM))}}_{\leq C(R) \delt \norm{f^1 - f^2}_X \text{ thanks to \eqref{Eqs:f-Linfty} and \eqref{Eqs:f-Linfty-difference}}} \\
			& \qquad + \norm{f^1 - f^2}_{W^{\alpha}_{2q}(\bbr; L^{2q}(\Omega))} \norm{g}_{W^{\alpha}_{2q}(\bbr; L^{2q}(\Omega))} \big),
		\end{align*}
		Now choosing $ \alpha $ such that $ 1/4 < \alpha < s < 1/2 - 5/4q $, where $ s $ is given in \eqref{Eqs:f-W-alpha}, one deduces that
		\begin{equation*}
			 \norm{(f^1 - f^2) g}_{\H{\onehalf}(0,T; \Lq{q}(\Omega))} \leq C T^{s - \alpha} \norm{f^1 - f^2}_X.
		\end{equation*}
		Moreover, 
		we have 
		\begin{align*}
			& \norm{(f^1 - f^2) g}_{\Lq{q}(0,T; \W{1}(\Omega))} \\
			& \quad = \norm{(f^1 - f^2) g}_{\Lq{q}(0,T; \Lq{q}(\Omega))} 
				+ \norm{\nabla (f^1 - f^2) g}_{\Lq{q}(0,T; \Lq{q}(\Omega))}
				+ \norm{(f^1 - f^2) \nabla g}_{\Lq{q}(0,T; \Lq{q}(\Omega))} \\
			& \quad \leq \norm{f^1 - f^2}_{\Lq{\infty}(0,T; \Lq{\infty}(\OM))} \norm{g}_X
				+ \norm{f^1 - f^2}_{\Lq{q}(0,T; \W{1}(\OM))} \norm{g}_{\Lq{\infty}(0,T; \Lq{\infty}(\OM))} \\
			& \qquad + \norm{f^1 - f^2}_{\Lq{\infty}(0,T; \Lq{\infty}(\OM))} \norm{g}_{\Lq{q}(0,T; \W{1}(\OM))} \leq C(R) \delt \norm{f^1 - f^2}_X.
		\end{align*}
		 which proves \eqref{Eqs:f2bF-W12Lq-difference}.
	\end{proof}
	\begin{remark}
		If one replaces the $ \rvm{f}_{t = 0} = 0 $ condition above by $ \normm{f}_{X} \leq \kappa $ for $ \kappa > 0 $, we still have similar estimates above with $ \delt $ substituted by $ \delt + \kappa $, which can be done by same argument as in \eqref{Eqs:Fs-I} below.
	\end{remark}
	\begin{lemma}
		\label{lemma:bF}
		Let $ q > 5 $ and $ \hF $ be the deformation gradient defined by \eqref{Eqs:deformation gradient} with respect to $ \hvf \in \YT^1 $ in $ \Of $ and $ \hus \in \YT^2 $ in $ \Os $ respectively. Assume that $ \hus^0 := \rvm{\hus}_{t = 0} \in \W{2 - 2/q}(\Os)^3 $ and $ \normm{\hnab \hus^0}_{\W{1 - 2/q}(\Os)^3} \leq \kappa $ with $ \kappa > 0 $ small enough. Then for every $ R > 0 $, there are a constant $ C = C(R) > 0 $ and a finite time $ 0 < T_R < 1 $ depending on $ R $ such that for all $ 0 < T < T_R $ and $ \norm{(\hvf, \hus)}_{\YT^1 \times \YT^2} \leq R $, $ \inv{\hF} $ exists a.e. with regularities
		\begin{equation*}
			\inv{\hFf} \in \W{1}(0,T; \W{1}(\Of)^{3 \times 3}), \quad 
			\inv{\hFs} \in \H{\onehalf}(0,T; \Lq{q}(\Os)^{3 \times 3}) \cap \Lq{q}(0,T; \W{1}(\Os)^{3 \times 3}),
		\end{equation*}
		and satisfies
		\begin{align}
			\label{Eqs:Ff}
			& \norm{\inv{\hFf}}_{\Lq{\infty}(0,T; \W{1}(\Of)^{3 \times 3})} \leq C, \quad
			\norm{\inv{\hFf} - \bbi}_{\Lq{\infty}(0,T; \W{1}(\Of)^{3 \times 3})} \leq C \delt, \\
			\label{Eqs:Fs}
			& \norm{\inv{\hFs}}_{\Lq{\infty}(0,T; \Lq{\infty}(\Os)^{3 \times 3})} \leq C, \quad
			\norm{\inv{\hFs} - \bbi}_{\Lq{\infty}(0,T; \Lq{\infty}(\Os)^{3 \times 3})} \leq C (\delt + \kappa), \\
			\label{Eqs:Ffidentity}
			& \seminorm{\FfOinv}_{\WO{r}(0,T; \W{1}(\Of)^{3 \times 3})} \leq C \delt, \quad 0 < r < 1,
		\end{align}
		Moreover, for $ \hwf \in \YT^1 $ and $ \hws \in \YT^2 $ with $ \norm{(\hwf,\hws)}_{\YT^1 \times \YT^2} \leq R $ and $ \rvm{(\hwf, \hws)}_{t = 0} = \rvm{(\hvf, \hvs)}_{t = 0} $, we have
		\begin{align}
			\label{Eqs:Ffdifference}
			& \seminorm{\inv{\hFf}(\hnab\hvf) - \inv{\hFf}(\hnab\hwf)}_{\WO{r}(0,T; \W{1}(\Of)^{3 \times 3})} \leq C \delt \norm{\hvf - \hwf}_{\YT^1}, \ 0 < r < 1,\\
			\label{Eqs:Fsdifference-Linfinity}
			& \norm{\inv{\hFs}(\hnab\hvs) - \inv{\hFs}(\hnab\hws)}_{\Lq{\infty}(0,T; \Lq{\infty}(\Os)^{3 \times 3})} \leq C (\delt + \kappa) \norm{\hvs - \hws}_{\YT^2}.
		\end{align}
	\end{lemma}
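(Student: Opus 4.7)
The plan is to reduce everything to a Neumann-series argument for $\hF^{-1}$, with smallness controlled by $T$ in the fluid case and by $T+\kappa$ in the solid case, and then use the resolvent identity
\[
	\hF^{-1} - \hG^{-1} = \hF^{-1}(\hG - \hF)\hG^{-1}
\]
for all difference bounds. I would organise the argument in four steps.

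\emph{Step 1 (initial-value smallness of $\hF-\bbi$).} For the fluid, since $\hu_f = \int_0^t \hvf\,\d\tau$, we have $\rvm{(\hF_f - \bbi)}_{t=0} = 0$ and $\hF_f(t)-\bbi = \int_0^t \hnab\hvf\,\d\tau$. Hence $\|\hF_f-\bbi\|_{L^\infty(0,T;W^1_q)} \leq T^{1/q'} \|\hnab\hvf\|_{L^q(0,T;W^1_q)} \leq CT^{1/q'}R$. For the solid, $\hF_s - \bbi = \hnab\hus$ and $\rvm{(\hnab\hus - \hnab\hus^0)}_{t=0}=0$, so Lemma~\ref{lemma:fF} (with the remark on the modification $\delta(T)\mapsto \delta(T)+\kappa$) gives $\|\hF_s-\bbi\|_{L^\infty(0,T;L^\infty)} \leq C(\delta(T)+\kappa)$. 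Choosing $T_R$ and $\kappa$ small makes both quantities $<\tfrac12$.

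\emph{Step 2 (invertibility and baseline regularity).} With the previous bounds, the Neumann series $\hF^{-1} = \sum_{k\geq 0}(-1)^k(\hF-\bbi)^k$ converges: for the fluid in the Banach algebra $L^\infty(0,T;W^1_q(\Of))$ (since $W^1_q$ is a Banach algebra for $q>3$), yielding at once \eqref{Eqs:Ff}; for the solid pointwise in $L^\infty(0,T;L^\infty)$, yielding \eqref{Eqs:Fs}. The temporal regularity of $\hF_f^{-1}\in W^1_q(0,T;W^1_q)$ follows from $\hF_f^{-1} - \bbi = -(\hF_f - \bbi)\hF_f^{-1}$ combined with the fact that $\pt(\hF_f - \bbi)=\hnab\hvf \in L^q(0,T;W^1_q)$ and the algebra property. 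For the solid, to place $\hF_s^{-1}-\bbi$ in $\H{\onehalf}(0,T;L^q)\cap L^q(0,T;W^1_q)$, I would apply Lemma~\ref{lemma:Bessel-multiplication} inductively to the series: the $L^\infty(L^\infty)$-smallness from Step~1 ensures the resulting geometric-type sum converges uniformly in this anisotropic space.

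\emph{Step 3 (seminorm estimate \eqref{Eqs:Ffidentity}).} Use the identity $\FfOinv = -(\hF_f-\bbi)\hF_f^{-1}$, distribute the Sobolev--Slobodeckij seminorm by a standard Leibniz-type inequality in the Banach algebra $W^1_q$, and bound one factor via Lemma~\ref{lemma:timeembedding}: since $(\hF_f-\bbi)|_{t=0}=0$ and $\pt(\hF_f-\bbi) \in L^q(0,T;W^1_q)$, we get $[\hF_f-\bbi]_{\WO{r}(0,T;W^1_q)}\leq C T^{1-r}R$. The other factor $\hF_f^{-1}$ is bounded in $L^\infty(W^1_q)$ by Step~2, giving the required $\delta(T)$-bound.

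\emph{Step 4 (difference estimates).} The resolvent identity reduces both \eqref{Eqs:Ffdifference} and \eqref{Eqs:Fsdifference-Linfinity} to controlling $\hF(\hv)-\hF(\hw)$ in the appropriate norm, since the outer $\hF^{-1}$ factors are uniformly bounded by Step~2. For the fluid, $\hF_f(\hvf)-\hF_f(\hwf)=\int_0^t \hnab(\hvf-\hwf)\,\d\tau$ vanishes at $t=0$, so its $\WO{r}(W^1_q)$-seminorm is $\leq CT^{1-r}\|\hvf-\hwf\|_{\YT^1}$ by Lemma~\ref{lemma:timeembedding}, and the product bound then follows as in Step~3. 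For the solid, $\hF_s(\hus)-\hF_s(\hws)=\hnab(\hus-\hws)$ vanishes at $t=0$ under the shared initial-value hypothesis, so Lemma~\ref{lemma:fF} (bound \eqref{Eqs:f-Linfty-difference}) directly produces a factor $\delta(T)+\kappa$ against $\|\hus-\hws\|_{\YT^2}$.

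The main obstacle is the solid case in Steps 2 and 4: the space $\H{\onehalf}(0,T;L^q)\cap L^q(0,T;W^1_q)$ is not a clean Banach algebra, so the Neumann series must be summed using the anisotropic multiplication inequality of Lemma~\ref{lemma:Bessel-multiplication}, and the smallness that makes the series converge is only available after mixing $\kappa$ (from the initial displacement) with the small-time parameter $\delta(T)$. Care is needed because the shift by the non-vanishing $\hnab\hus^0$ prevents a direct application of the vanishing-trace form of Lemma~\ref{lemma:fF}; one must split $\hnab\hus = \hnab\hus^0 + (\hnab\hus - \hnab\hus^0)$ and treat the constant-in-time part separately using the smallness assumption \eqref{Eqs:us0-smallness}.
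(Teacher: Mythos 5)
Your Steps 1, 3 and 4, and the fluid part of Step 2, are essentially sound and parallel the paper's argument (the paper defers the fluid case to \cite[Lemma 4.1]{AL2021a} and uses precisely the splitting $\hnab\hus = \hnab\hus^0 + (\hnab\hus - \hnab\hus^0)$ you propose, together with Lemma~\ref{lemma:fF} and the smallness assumption on $\hnab\hus^0$). Where the proposal breaks down is the regularity claim $\hF_s^{-1}\in \H{\onehalf}(0,T;\Lq{q})\cap\Lq{q}(0,T;\W{1})$ in Step 2, which you propose to obtain by summing the Neumann series $\sum_k(\hF_s-\bbi)^k$ via Lemma~\ref{lemma:Bessel-multiplication}. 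This does not close: Lemma~\ref{lemma:Bessel-multiplication} bounds $\normm{fg}_{\H{\onehalf}(\Lq{q})}$ by a sum of three products, and the last one, $\normm{f}_{W^{\alpha}_{2q}(L^{2q})}\normm{g}_{W^{\alpha}_{2q}(L^{2q})}$, is not controlled by the $L^\infty(L^\infty)$-smallness that gives you geometric decay in the other two terms. So the inductive estimate of $\normm{(\hF_s-\bbi)^k}_{\H{\onehalf}(\Lq{q})}$ acquires an uncontrolled cross-term $\normm{(\hF_s-\bbi)^{k-1}}_{W^{\alpha}_{2q}}\normm{\hF_s-\bbi}_{W^{\alpha}_{2q}}$, and to run the induction one would also need to control $\normm{(\hF_s-\bbi)^{k-1}}_{W^{\alpha}_{2q}}$, which in turn faces the same problem one level down; the series does not obviously converge geometrically in this anisotropic space. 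You flag this as ``the main obstacle'' but the fix you propose (mixing $\kappa$ with $\delta(T)$) does not touch it, since the anisotropic norm of $\hnab\hus$ is only bounded by $R$, not small.

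The paper avoids the issue entirely by separating two concerns: the Neumann series (in the pointwise Banach algebra $L^\infty(0,T;L^\infty(\Os))$) is used only to guarantee that $\hF_s^{-1}$ exists a.e.; the anisotropic regularity of $\hF_s^{-1}$ is then obtained by observing that $M\mapsto(\bbi+M)^{-1}$ is $C^\infty$ away from $-\bbi$ and applying the composition result Lemma~\ref{lemma:composition-Slobodeckij} (together with the $q>5$ embedding), which maps $\hnab\hus\in\H{\onehalf}(0,T;\Lq{q})\cap\Lq{q}(0,T;\W{1})$ directly to $\hF_s^{-1}$ in the same class. The key missing idea in your proposal is this composition-of-smooth-functions argument; once you replace your Step~2 (solid case) with it, the remaining steps go through.
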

	\begin{proof}
		The proof of this lemma is similar to \cite[Lemma 4.1]{AL2021a}. However, for the solid part the regularity is a bit lower due to the quasi-stationary elastic equation. For $ \hFf $, one can refer to \cite[Lemma 4.1]{AL2021a} with Lemma \ref{lemma:timeembedding-continuous}, \ref{lemma:multiplication} and \ref{lemma:composition-Slobodeckij}, while \eqref{Eqs:Ffidentity} and \eqref{Eqs:Ffdifference} follows from Lemma \ref{lemma:timeembedding}.
		For $ \hFs $, it follows from the definition of $ \YT^2 $ that
		\begin{align*}
			\hFs - \bbi = \hnab \hus & \in \H{\onehalf}(0,T; \Lq{q}(\Os)^{3 \times 3}) \cap \Lq{q}(0,T; \W{1}(\Os)^{3 \times 3}).
		\end{align*}
		Since $ \hnab \hus^0 \in \W{1 - 2/q}(\Os)^3 \hookrightarrow \Lq{\infty}(\Os)^3 $ for $ q > 5 $ and $ \normm{\hnab \hus^0}_{\W{1 - 2/q}(\Os)^3} \leq \kappa $, one obtains from Lemma \ref{lemma:fF} that
		\begin{equation} \label{Eqs:Fs-I}
			\begin{aligned}
				\sup_{0 \leq t \leq T} & \norm{\hFs - \bbi}_{\Lq{\infty}(\Os)^{3 \times 3}} \\
				& = \sup_{0 \leq t \leq T} \norm{\hFs - (\bbi + \hnab \hus^0)}_{\Lq{\infty}(\Os)^{3 \times 3}} + \norm{\hnab \hus^0}_{\Lq{\infty}(\Os)^3}
				\leq C (\delt + \kappa)
				\leq \onehalf,
			\end{aligned}
		\end{equation}
		by taking $ T_R, \kappa > 0 $ small enough such that $ \delta(T_R) + \kappa \leq 1/(2C) $. Then by the Neumann series, $ \inv{\hFs} $ does exist. Note that $ \inv{\hFs}(\hnab \hus) = \inv{(\bbi + \hnab \hus)} $ is in the class of $ C^{\infty}(\bbr^{3 \times 3} \backslash \{- \bbi\})^{3 \times 3} $ with respect to $ \hnab \hus $, it turns out from Lemma \ref{lemma:composition-Slobodeckij} and $ q > 5 $ that
		\begin{equation}
			\label{Eqs:Fs-inv}
			\inv{(\bbi + \hnab \hus^0)} \in \W{1 - \frac{2}{q}}(\Os)^{3 \times 3}, \quad 
			\inv{\hFs} \in \H{\onehalf}(0,T; \Lq{q}(\Os)^{3 \times 3}) \cap \Lq{q}(0,T; \W{1}(\Os)^{3 \times 3}).
		\end{equation}
		In addition, we have
		\begin{gather*}
			\inv{(\bbi + \hnab \hus^0)} - \bbi 
			= \int_0^1 \frac{\d}{\d \tau} \inv{(\bbi + \tau \hnab \hus^0)} \,\d \tau
			= \int_0^1 - \inv{(\bbi + \tau \hnab \hus^0)} \hnab \hus^0 \inv{(\bbi + \tau \hnab \hus^0)} \,\d \tau,
		\end{gather*}
		then
		\begin{equation*}
			\norm{\inv{(\bbi + \hnab \hus^0)} - \bbi}_{\W{1 - \frac{2}{q}}(\Os)^{3 \times 3}} \leq C \kappa,
		\end{equation*}
		provided $ \kappa > 0 $ sufficiently small, where $ C > 0 $ is finite.
		Consequently, similarly to \eqref{Eqs:Fs-I}, one can derive that
		\begin{equation*}
			\sup_{0 \leq t \leq T} \norm{\inv{\hFs} - \bbi}_{\Lq{\infty}(\Os)^{3 \times 3}}
			\leq C (\delt + \kappa),
		\end{equation*}
		which proves \eqref{Eqs:Fs} and \eqref{Eqs:Fsdifference-Linfinity}. 
	\end{proof}
	
	\begin{lemma}
		\label{lemma:DW}
		Let $ \OM \subset \bbr^3 $ be a bounded domain with $ C^1 $ boundary, $ 0 < s \leq 1 $ and $ 1 < q < \infty $ with $ sq > 3 $. Let $ W : \bbr^{3 \times 3} \rightarrow \bbr_+ $ satisfy Assumption \ref{assumptions:smoothness}. Then for $ \bF \in \K{s}(\OM)^{3 \times 3} $, $ K \in \{W, H\} $, with $ \norm{\bF}_{\K{s}(\OM)^{3 \times 3}} \leq R $, there is a positive constant $ C $ depending on $ R $ such that
		\begin{equation*}
			\norm{D^k W(\bF)}_{\K{s}(\OM)^{3^{2k}}} \leq C, \quad k \in \{0, 1, 2, 3\}.
		\end{equation*}
		Moreover, for $ \bF^1, \bF^2 \in \K{s}(\OM)^{3 \times 3} $ with $ \norm{\bF^1}_{\K{s}(\OM)^{3 \times 3}}, \norm{\bF^2}_{\K{s}(\OM)^{3 \times 3}} \leq R $, we have
		\begin{equation*}
			\norm{D^k W(\bF^1) - D^k W(\bF^2)}_{\K{s}(\OM)^{3^{2k}}} \leq C \norm{\bF^1 - \bF^2}_{\K{s}(\OM)^{3 \times 3}}, \quad k \in \{1, 2, 3\}.
		\end{equation*}
	\end{lemma}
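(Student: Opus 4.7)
The plan is to reduce the proof to a componentwise application of Lemma \ref{lemma:composition-Slobodeckij}, identifying $\bbr^{3\times 3} \cong \bbr^9$ (and $\bbr^{3^{2k}}$ with a finite product of copies of $\bbr$). Under this identification, the hypothesis $sq > 3$ corresponds exactly to the condition $s > d/p$ of that lemma (with $d = 3$, $p = q$); the endpoint $s = 1$ is handled by Remark \ref{remark:composition-Sobolev}.

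For the boundedness claim, fix $k \in \{0,1,2,3\}$. By Assumption \ref{assumptions:smoothness}, $W \in C^4$, so each scalar component of $D^k W$ lies in $C^{4-k}(\bbr^9; \bbr) \subset C^1(\bbr^9; \bbr)$. Applying the first part of Lemma \ref{lemma:composition-Slobodeckij} componentwise with $N = 9$ and $\bu = \bF$ of norm at most $R$, and summing the resulting estimates over the $3^{2k}$ components, yields $\norm{D^k W(\bF)}_{\K{s}(\OM)^{3^{2k}}} \leq C(R)$.

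For the Lipschitz assertion with $k \in \{1, 2\}$, we have $D^k W \in C^{4-k} \subset C^2$, so the second part of Lemma \ref{lemma:composition-Slobodeckij} applies componentwise and delivers the estimate directly, with a constant depending on $R$. The borderline case is $k = 3$, where only $D^3 W \in C^1$ is at hand. Here I would use the fundamental theorem of calculus componentwise to write
\begin{equation*}
D^3 W(\bF^1) - D^3 W(\bF^2) = \int_0^1 D^4 W\big((1-t)\bF^2 + t \bF^1\big)\,\d t \cdot (\bF^1 - \bF^2),
\end{equation*}
and combine this with the Banach algebra property of $\K{s}(\OM)$ for $sq > 3$ (Lemma \ref{lemma:multiplication}), reducing matters to a uniform $\K{s}(\OM)$-bound for the integral on the right.

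The main obstacle is precisely this last uniform bound: since under Assumption \ref{assumptions:smoothness} $D^4 W$ is only continuous, the first part of Lemma \ref{lemma:composition-Slobodeckij} (which requires a $C^1$ outer function) does not apply to it verbatim. I would circumvent this either by invoking slightly more regularity of $W$, namely local Lipschitz continuity of $D^4 W$ (i.e.\ $W \in C^{4,1}_{\mathrm{loc}}$), which is sufficient to run the Slobodeckij composition estimate on $D^4 W$, or alternatively by mollifying $W$, applying the $C^1$-case to the regularizations, and passing to the limit using the uniform $L^\infty$-bound on $\bF^1, \bF^2$ that follows from the embedding $\K{s}(\OM) \hookrightarrow L^\infty(\OM)$ valid for $sq > 3$. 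Once this bound is secured, the algebra estimate $\norm{g\,(\bF^1 - \bF^2)}_{\K{s}} \leq C\, \norm{g}_{\K{s}} \norm{\bF^1 - \bF^2}_{\K{s}}$ closes the argument.
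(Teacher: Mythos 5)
Your argument follows the same route the paper takes: componentwise reduction to Lemma \ref{lemma:composition-Slobodeckij}, with the case $s=1$ handled via Remark \ref{remark:composition-Sobolev}. The paper's own proof is a one-liner that simply invokes these two results, so for $k\in\{0,1,2\}$ (all claims) and $k=3$ (boundedness only) you are doing exactly what the paper does, with the details spelled out.

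You are right, however, to flag $k=3$ as a borderline case that is \emph{not} literally covered: under Assumption \ref{assumptions:smoothness} we only have $W\in C^4$, hence $D^3W\in C^1$, whereas the Lipschitz part of Lemma \ref{lemma:composition-Slobodeckij} requires the outer function to be of class $C^2$. The paper's proof does not address this, so you have actually caught a genuine wrinkle the paper glosses over. That said, your proposed mollification fix does not close the gap as stated. Writing $D^3W(\bF^1)-D^3W(\bF^2)$ via the fundamental theorem of calculus and trying to bound $\int_0^1 D^4W_\epsilon((1-t)\bF^2+t\bF^1)\,\d t$ in $\K{s}$ through the first part of Lemma \ref{lemma:composition-Slobodeckij} produces a constant controlled by the $C^1$-norm of $D^4W_\epsilon$ on a ball, i.e.\ by $\sup|D^5W_\epsilon|$; this is \emph{not} uniform in $\epsilon$ unless $W\in C^5$ (or at least $D^4W$ is locally Lipschitz). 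So the mollification route, without additional input, does not recover a Lipschitz constant independent of~$\epsilon$, and passing to the limit via the $L^\infty$-bound on $\bF^1,\bF^2$ only controls the $L^q$-part, not the $\K{s}$-seminorm. Your first fix --- assuming $W\in C^{4,1}_{\mathrm{loc}}$ (or simply $C^5$) --- is the one that actually works, but it strengthens Assumption \ref{assumptions:smoothness} relative to what the paper states. In short: your proof is as least as correct as the paper's, and sharper in that it identifies the $k=3$ Lipschitz subtlety, but neither your mollification argument nor the paper's one-line citation resolves that case under the stated $C^4$ hypothesis.
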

	\begin{proof}
		One can prove it by Lemma \ref{lemma:composition-Slobodeckij} for $ 0 < s < 1 $ directly, while the case $ s = 1 $ follows by Remark \ref{remark:composition-Sobolev}.
	\end{proof}
	\begin{lemma}
		\label{lemma:gpositive}
		Let $ T > 0 $, $ R > 0 $ and $ q \in (1, \infty) $. $ \Os $ is the domain defined in Section \ref{sec:model-description}. Given $ \hg \in \W{1}(0,T; \W{1}(\Os)) $ with $ \norm{\hg}_{\W{1}(0,T; \W{1}(\Os))} \leq R $, $ \rvm{\hg(\bX, t)}_{t = 0} = \hg^0 $, there exists a time $ T_R > 0 $ such that for $ T \in (0, T_R) $, one has 
		\begin{equation*}
			\hg(\bX, t) \geq \onehalf, \ \forall \, \bX \in \Os, t \in [0,T].
		\end{equation*}
	\end{lemma}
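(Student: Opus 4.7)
The argument combines the one-dimensional fundamental theorem of calculus in time with a Sobolev embedding in space. As the lemma is clearly intended to be applied under the (physically natural, and implicit in the initial condition $\rv{\hFsg}_{t=0} = \hg^0 \bbi$ with $\hg^0 \equiv 1$) assumption $\hg^0 \geq 1$, I will proceed under that understanding; the analogous statement $\hg \geq \hg^0/2$ for strictly positive $\hg^0$ would be proved identically.

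First I would invoke the embedding $\W{1}(0,T;\W{1}(\Os)) \hookrightarrow C([0,T]; \W{1}(\Os))$ (valid for any Banach-space target), together with the Sobolev embedding $\W{1}(\Os) \hookrightarrow \Lq{\infty}(\Os)$, which is available provided $q > 3$ --- consistent with the standing assumption $q > 5$ of Theorem~\ref{theorem: main}. Composing these gives $\hg \in C([0,T]; \Lq{\infty}(\Os))$, so that pointwise-in-$\bX$ values of $\hg(\cdot, t)$ make sense uniformly in $t$.

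Next, for each $\bX \in \Os$ and $t \in [0,T]$, the one-dimensional fundamental theorem of calculus yields
\begin{equation*}
	\hg(\bX, t) - \hg^0(\bX) = \int_0^t \pt \hg(\bX, \tau) \,\d \tau.
\end{equation*}
Taking absolute values, applying H\"older's inequality in time and the Sobolev embedding $\W{1}(\Os) \hookrightarrow \Lq{\infty}(\Os)$ in space, I obtain
\begin{equation*}
	\norm{\hg(\cdot, t) - \hg^0}_{\Lq{\infty}(\Os)}
	\leq T^{1/q'} \norm{\pt \hg}_{\Lq{q}(0,T; \Lq{\infty}(\Os))}
	\leq C T^{1/q'} \norm{\hg}_{\W{1}(0,T; \W{1}(\Os))}
	\leq C T^{1/q'} R,
\end{equation*}
with $C = C(\Os, q) > 0$ independent of $T$. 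Choosing $T_R > 0$ so small that $C T_R^{1/q'} R \leq \onehalf$, one concludes $\hg(\bX, t) \geq \hg^0(\bX) - \onehalf \geq \onehalf$ uniformly in $(\bX, t) \in \Os \times [0, T_R)$.

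The argument is essentially routine once the right embedding chain is identified; the only subtlety worth flagging is the need for $q$ large enough (here $q > 3$) to pass from the Banach-valued regularity $\W{1}(0,T; \W{1}(\Os))$ to genuine pointwise control, without which one could only obtain a statement valid almost everywhere in $\bX$ --- which would not suffice for later use in controlling $\hg^{-1}$, $\hg^{-2}$, $\hg^{-3}$ appearing in the nonlinear terms $\ks$ and $\tFs$.
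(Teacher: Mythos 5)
Your proof is correct and follows essentially the same route as the paper's: the fundamental theorem of calculus in time, H\"older's inequality giving the factor $T^{1/q'} = T^{1-1/q}$, and the Sobolev embedding $\W{1}(\Os) \hookrightarrow \Lq{\infty}(\Os)$ to obtain uniform control. You have, in addition, usefully made explicit two tacit points that the paper leaves implicit: the embedding $\W{1}(\Os) \hookrightarrow \Lq{\infty}(\Os)$ requires $q>3$ (the lemma as stated says $q\in(1,\infty)$, but the proof silently uses this embedding, so the restriction is really inherited from the standing assumption $q>5$), and the conclusion $\hg \geq \onehalf$ presupposes $\hg^0 \geq 1$.
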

	\begin{proof}
		By calculus,
		\begin{equation*}
			\hg(\bX, t) = \hg^0(\bX) + \int_0^t \pt \hg(\bX, \tau) \d \tau, \ \forall \, \bX \in \Os, t \in [0,T].
		\end{equation*}
		Then
		\begin{equation*}
			\norm{\hg(t) - \hg^0}_{\Lq{\infty}(\Os)} 
			\leq C \norm{\int_0^t \pt \hg(\cdot, \tau) \d \tau}_{\W{1}(\Os)}
			\leq C T^{1 - \frac{1}{q}} R \leq \onehalf,
		\end{equation*}
		where we choose $ T_R > 0 $ sufficiently small such that $ T_R^{1 - \frac{1}{q}} \leq \frac{1}{2CR} $. Hence $ \hg(\bX, t) \geq \onehalf $, for all $ \bX \in \Os, t \in [0,T] $.
	\end{proof}
	\subsection{Lipschitz estimates}
	\label{sec:lipschitz-estimates}
	Now we are in the position to derive the Lipschitz estimates of the nonlinear lower-order terms in \eqref{Eqs:fullsystem-Lagrangian-linear}. 
	To this end, let us first define the function spaces for the nonlinear terms $ \ZT := \prod_{j = 1}^{12} \ZT^j $, where
	\begin{gather*}
		\ZT^1 := \Lq{q}(0,T; \Lq{q}(\Of)^3), \\
		\ZT^2 := \Lq{q}(0,T; \Lq{q}(\Os)^3) \cap \H{\onehalf}(0,T; W_{q,\Gamma}^{-1}(\Os)^3), \\
		\ZT^3 := \left\{
					g \in \Lq{q}(0,T; \W{1}(\Of)) \cap\W{1}(0,T; \W{- 1}(\Of)): 
					\tr_\Gamma(g) \in \ZT^5
				\right\}, \\
		\ZT^4 := \Lq{q}(0,T; \W{1}(\Os)) \cap \H{\onehalf}(0,T; \Lq{q}(\Os)),
				\\
		\ZT^5 := \Lq{q}(0,T; \W{1 - \oneq}(\Gamma)^3) \cap \W{\onehalf - \onehalfq}(0,T; \Lq{q}(\Gamma)^3), \\
		\ZT^6 := \Lq{q}(0,T; \W{2 - \oneq}(\Gamma)^3) \cap \H{\onehalf}(0,T; \W{1 - \oneq}(\Gamma)^3), \\
		\ZT^7 := \Lq{q}(0,T; \W{1 - \oneq}(\Gs)^3) \cap \H{\onehalf}(0,T; \W{- \oneq}(\Gs)^3), \\
		\ZT^8 := \Lq{q}(0,T; \Lq{q}(\OM \backslash \Gamma)), \\
		\ZT^9 := \Lq{q}(0,T; \W{1 - \oneq}(\Gamma)) \cap \W{\onehalf - \onehalfq}(0,T; \Lq{q}(\Gamma)),\\
		\ZT^{10} := \Lq{q}(0,T; \W{1 - \oneq}(\Gs)) \cap \W{\onehalf - \onehalfq}(0,T; \Lq{q}(\Gs)), \\
		\ZT^{11} := \Lq{q}(0,T; \W{1}(\Os)), \quad
		\ZT^{12} := \Lq{q}(0,T; \W{1}(\Os)),
	\end{gather*}
	Let 
	\begin{equation}
		\label{Eqs:bw}
		\bw := (\hvf, \hus, \hpif, \hpis, \hc, \hcss, \hg)
	\end{equation}
	be in the space $ \YT $ given in Section \ref{sec:reformulation} and define the associated initial data as 
	\begin{equation}
		\label{Eqs:bw0}
		\bw_0 := \rv{(\hvf, \hus, \hpis, \hc, \hcss, \hg)}_{t = 0} = (\vo_f, \hus^0, \hpis^0, \co, \hc_*^0, \hg^0).
	\end{equation}
	Then we have the following Lipschitz estimates for the lower-order terms defined in \eqref{Eqs:fullsystem-Lagrangian-linear}.
	\begin{proposition}
		\label{prop:Lipschitzestimate}
		Let $ q > 5 $ and $ R > 0 $. There exist constants $ C, \kappa > 0 $ and a finite time $ T_R > 0 $ both depending on $ R $ such that for $ 0 < T < T_R $ and $ \normm{\hnab \hus^0}_{\W{1 - 2/q}(\Os)^3} + \normm{\hpis^0}_{\W{1 - 2/q}(\Os)} \leq \kappa $,
		\begin{gather*}
			\norm{\bK_f(\bw^1) - \bK_f(\bw^2)}_{\ZT^1} \leq C (\delt + \kappa) \norm{\bw^1 - \bw^2}_{\YT}, \\
			\norm{(\bK_s, \bH^2)(\bw^1) - (\bK_s, \bH^2)(\bw^2)}_{\ZT^2 \times \ZT^7} \leq C (\delt + \kappa) \norm{\bw^1 - \bw^2}_{\YT}, \\
			\norm{G_i(\bw^1) - G_i(\bw^2)}_{\ZT^3 \times \ZT^4} \leq C (\delt + \kappa) \norm{\bw^1 - \bw^2}_{\YT}, \ i \in \{f,s\}, \\
			\norm{\bH_i^1(\bw^1) - \bH_i^1(\bw^2)}_{\ZT^5 \times \ZT^6} \leq C (\delt + \kappa) \norm{\bw^1 - \bw^2}_{\YT}, \ i \in \{f,s\}, \\
			\norm{F^j(\bw^1) - F^j(\bw^2)}_{\ZT^{7 + j}} \leq C (\delt + \kappa) \norm{\bw^1 - \bw^2}_{\YT}, \ j \in \{1,2,3,4,5\},
		\end{gather*}
		for all $ \norm{\bw^1}_{\YT}, \norm{\bw^2}_{\YT} \leq R $ with $ \bw_0^1 = \bw_0^2 $.
	\end{proposition}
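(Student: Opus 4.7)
The plan is to handle each nonlinear term in \eqref{Eqs:fullsystem-Lagrangian-linear} separately, splitting every product via the telescoping identity
\[
	a^1 b^1 - a^2 b^2 = (a^1 - a^2)\,b^1 + a^2\,(b^1 - b^2),
\]
and then invoking the auxiliary tools of Section~\ref{sec:useful-lemmas} to extract a $\delt + \kappa$ prefactor. Because $\bw_0^1 = \bw_0^2$, every component of $\bw^1 - \bw^2$ vanishes at $t = 0$, so Lemma~\ref{lemma:timeembedding-continuous} produces a $\delt$ factor whenever such a difference is measured in a norm weaker than the $\YT$-norm. Throughout, Lemma~\ref{lemma:bF} furnishes the smallness of $\FfOinv$ (purely of order $\delt$) and of $\FsOinv$ (of order $\delt + \kappa$), while Lemma~\ref{lemma:DW} together with Lemma~\ref{lemma:gpositive} controls the compositions with $DW$ and with powers of $\hg$.

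For the fluid quantities $\bK_f$, $G_f$, and the fluid contribution to $\bH_f^1$, every summand in $\kf$ contains a factor of $\FfOinv$ or $\FfOinvtran$ that is $\delt$-small in $L^\infty(0,T; W^1_q(\Of))$ by \eqref{Eqs:Ffidentity}--\eqref{Eqs:Ffdifference}; combined with the multiplication Lemma~\ref{lemma:multiplication} this yields the claimed $\Lq{q}$-Lipschitz estimate at once, and the anisotropic trace Lemma~\ref{lemma:trace-time-regularity} lifts the boundary contribution into $\ZT^5$. The Dirichlet datum $\bH_s^1 = \int_0^t \hvf \, \d \tau$ is estimated by H\"older's inequality in time, producing a positive power of $T$ directly.

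The solid-type terms are more subtle. In $\ks$ every occurrence of $\hpis$ is decomposed as $\hpis = (\hpis - \hpis^0) + \hpis^0$: the first summand vanishes at $t = 0$ and is $\delt$-small in $L^\infty(0,T; L^\infty(\Os))$ via Lemma~\ref{lemma:fF}, while the second is of size $\kappa$ by \eqref{Eqs:us0-smallness}. Analogously, each factor $\FsOinv$, $\hg^3 - (\hg^0)^3$, $(\hg^0)^2 - \hg^2$, $\hg - \hg^0$, and the increment $DW(\hFs) - DW(\hFs / \hg)$ is $(\delt + \kappa)$-small via Lemmas~\ref{lemma:bF}, \ref{lemma:DW}, \ref{lemma:gpositive}, and the Taylor remainder $\int_0^1 D^3 W(\cdots)(1-s) \, \d s\,(\hFs - \bbi)(\hFs - \bbi)$ carries two such small factors. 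The divergence structure $\bK_s = \hdiv \ks$ and $\bH^2 = -\ks \hngs$ reduces the $\H{1/2}(0,T; W_{q,\Gamma}^{-1}(\Os)^3)$-norm of $\bK_s$ and the $\H{1/2}(0,T; \W{-1/q}(\Gs)^3)$-norm of $\bH^2$ to an $\H{1/2}(0,T; \Lq{q}(\Os))$-control of $\ks$, for which the Bessel-multiplication Lemma~\ref{lemma:Bessel-multiplication} combined with the $W^\alpha_{2q}$-estimate \eqref{Eqs:f-W-alpha} of Lemma~\ref{lemma:fF} is decisive. The mass-balance term $G_s$ and the concentration and ODE terms $F_f^j, F_s^j, F^j$ are treated in the same spirit.

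The main obstacle is precisely this sharp $\H{1/2}(0,T; \Lq{q})$ bookkeeping of the pressure contribution $\hpis (\FsOinv)$ in $\ks$: since $\hpis$ only lies in $\H{1/2}(0,T; \Lq{q}(\Os)) \cap \Lq{q}(0,T; \W{1}(\Os))$ with no additional time-smallness, the bilinear bound of Lemma~\ref{lemma:Bessel-multiplication} must be applied together with the auxiliary $W^\alpha_{2q}$-seminorm estimate of Lemma~\ref{lemma:fF} for a suitable $\alpha \in (1/4,\, 1/2 - 5/(4q))$, and the cross factor $T^{s-\alpha}$ eventually produces the required $\delt$. It is this step that forces both the initial pressure smallness \eqref{Eqs:us0-smallness} and the restriction $q > 5$; once it is in place, all the other Lipschitz estimates follow from routine applications of the multiplication, composition, deformation-gradient, and trace lemmas of Section~\ref{sec:useful-lemmas}.
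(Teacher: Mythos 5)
Your proposal matches the paper's proof in both structure and tool choice: the telescoping decomposition, the divergence form $\bK_s = \hdiv\ks$, $\bH^2 = -\ks\hngs$ reducing both to an $\H{1/2}(0,T;\Lq{q}) \cap \Lq{q}(0,T;\W{1})$ bound on $\ks$, the pressure split $\hpis = (\hpis - \hpis^0) + \hpis^0$ (this is the paper's estimate \eqref{Eqs:pressureEstimate}), and the reliance on Lemmas \ref{lemma:fF}, \ref{lemma:bF}, \ref{lemma:DW}, \ref{lemma:gpositive}, \ref{lemma:trace-time-regularity}, and \ref{lemma:Bessel-multiplication} are all exactly what the paper does. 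One small misattribution: the paper flags $((\hg^0)^3 - 1)\hpis\bbi$, not $\hg^3\hpis(\FsOinv)$, as the term that actually forces the initial-pressure smallness, since in the latter the factor $\FsOinv$ already supplies $(\delt + \kappa)$-smallness via Lemma \ref{lemma:bF}, whereas $(\hg^0)^3 - 1$ is time-independent and carries none.
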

	\begin{proof}
		For the estimates related to the fluid (with a subscript $ f $) except $ \bH_f^1 $ and $ F^j $, $ j = 1,...,5 $, we refer to \cite[Proposition 4.2]{AL2021a}, with the help of Lemma \ref{lemma:fF} and \ref{lemma:bF}.
		
		\textbf{Estimate of $ \bK_s, \bH^2 $}. Thanks to the divergence form from the linearization, i.e. $ \bK_s = \Div \ks $ and $ \bH^2 = - \ks \hn_{\Gs} $, one can estimate $ \bK_s, \bH^2 $ together with cancellation of boundary data like Corollary \ref{coro:f=divF}. Namely,
		\begin{align*}
			& \norm{(\bK_s, \bH^2)(\bw^1) - (\bK_s, \bH^2)(\bw^2)}_{\ZT^2 \times \ZT^7} \\
			& \qquad \qquad \qquad 
				\leq \norm{\ks(\bw^1) - \ks(\bw^2)}_{\Lq{q}(0,T; \W{1}(\OM)^{3 \times 3}) \cap \H{\onehalf}(0,T; \Lq{q}(\Os)^{3 \times 3})}.
		\end{align*}
		Combining the definition of $ \ks $ in Section \ref{sec:linearization}, Lemma \ref{lemma:fF}--\ref{lemma:gpositive} and Assumption $ \ref{assumption:W-energy-density} $, one obtains for $ q > 5 $,
		\begin{equation*}
			\norm{(\bK_s, \bH^2)(\bw^1) - (\bK_s, \bH^2)(\bw^2)}_{\ZT^2 \times \ZT^7}
			\leq C (\delt + \kappa) \norm{\bw^1 - \bw^2}_{\YT}.
		\end{equation*}
		In this estimate, the smallness of the initial solid pressure is employed for the term $ ((\hg^0)^3 - 1) \hpis $ in $ \ks $, i.e.,
		\begin{equation}
			\label{Eqs:pressureEstimate}
			\norm{\hpis}_{\Lq{\infty}(0,T; \Lq{\infty}(\Os))} \leq
			\norm{\hpis - \hpis^0}_{\Lq{\infty}(0,T; \Lq{\infty}(\Os))} + \norm{\hpis^0}_{\Lq{\infty}(\Os)}
			\leq C(\delt + \kappa).
		\end{equation}
	
		\textbf{Estimate of $ G_s $}. By means of Lemma \ref{lemma:fF} and \ref{lemma:bF}, estimate in $ \Lq{q}(0,T; \W{1}(\Os)) $ is clear. By the definition of $ G_s $, 
		\begin{align*}
			G_s(\bw^1) - G_s(\bw^2)
			= (\inv{\hFs}(\bw^1) - \bbi) : (\hnab\hus^1 - \hnab\hus^2)
			+\big(\inv{\hFs}(\bw^1) - \inv{\hFs}(\bw^2)\big) : \hnab \hus^2.
		\end{align*}
		Then with Lemma \ref{lemma:fF}, \ref{lemma:bF} and the regularity \eqref{Eqs:Fs-inv}, we have for $ q > 5 $ that
		\begin{equation*}
			\norm{G_s(\bw^1) - G_s(\bw^2)}_{\H{\onehalf}(0,T; \Lq{q}(\Os))} 
			\leq C (\delt + \kappa) \norm{\bw^1 - \bw^2}_{\YT}.
		\end{equation*}
		
		\textbf{Estimate of $ \bH_{f/s}^1 $}. With $ \hvf \in \YT^1 $, one knows
		\begin{equation*}
			\int_0^t \hvf(\bX, \tau) \d \tau \in \W{1}(0,T; \W{2}(\Of)^3) \cap \W{2}(0,T; \Lq{q}(\Of)^3)
			\hookrightarrow \H{\frac{3}{2}}(0,T; \W{1}(\Of)^3).
		\end{equation*}
		It follows from the trace theorem $ \tr_{\Gamma}: \W{k}(\Of) \rightarrow \W{k - \oneq}(\Gamma) $ that
		\begin{equation*}
			\rv{\int_0^t \hvf(\bX, \tau) \d \tau}_{\Gamma} \in \H{\frac{3}{2}}(0,T; \W{1 - \oneq}(\Gamma)^3) \cap \W{1}(0,T; \W{2 - \oneq}(\Gamma)^3),
		\end{equation*}
		whose Lipschitz estimate in $ \ZT^6 $ can be controlled by $ \delta(T) \norm{\bw^1 - \bw^2}_{\YT} $ thanks to Lemma \ref{lemma:timeembedding} and \ref{lemma:trace-time-regularity}, namely,
		\begin{equation*}
		 	\norm{\bH_s^1(\bw^1) - \bH_s^1(\bw^2)}_{\ZT^6} \leq C \delt \norm{\bw^1 - \bw^2}_{\YT}.
		\end{equation*}
	 	
	 	Now let us recall $ \bH_f^1 = - \kf \hng + \ks \hng $. The first part can be addressed in the same way as in \cite[Proposition 4.2]{AL2021a}. By Lemma \ref{lemma:trace-time-regularity} the anisotropic trace theorem and $ C^3 $ interface that ensures a $ \hng $ of class $ C^2 $, the second term can be estimated by
	 	\begin{equation*}
	 		\norm{\ks(\bw^1) - \ks(\bw^2)}_{\ZT^5}
	 		\leq C \norm{\ks(\bw^1) - \ks(\bw^2)}_{\Lq{q}(0,T; \W{1}(\OM)^{3 \times 3}) \cap \H{\onehalf}(0,T; \Lq{q}(\Os)^{3 \times 3})}
	 	\end{equation*}
	 	Then together with Lemma \ref{lemma:fF}--\ref{lemma:gpositive} and Assumption $ \ref{assumption:W-energy-density} $, one gets
 		\begin{equation*}
 			\norm{\bH_f^1(\bw^1) - \bH_f^1(\bw^2)}_{\ZT^5} \leq C (\delt + \kappa) \norm{\bw^1 - \bw^2}_{\YT}.
 		\end{equation*}
 		
 		
 		\textbf{Estimate of $ F^j $}. We can estimate $ F^1_f $ and $ F^j, j = 4, 5 $ analogously as in \cite{AL2021a}. For others, since $ \YT^5 \hookrightarrow \H{1/2}(0,T; \W{1}(\Omega)) $ implies that
 		\begin{equation*}
 			\hnab \hcs \in \H{\onehalf}(0,T; \Lq{q}(\Os)^3) \cap \Lq{q}(0,T; \W{1}(\Os)^3),
 		\end{equation*}
 		one can apply Lemma \ref{lemma:fF} and \ref{lemma:bF} to derive the corresponding estimates with $ q > 5 $, combining Lemma \ref{lemma:trace-time-regularity}, the regularity of $ \tran{\hFs} $, $ \hcss $ and $ \hg $.

 		This completes the proof.
	\end{proof}
	
	\subsection{Nonlinear well-posedness}
	\label{sec:nonlinear-proof}
	For a $ \bw $ defined in \eqref{Eqs:bw}, define
	\begin{equation*}
		\sM(\bw) := \left( \bK_f, \bK_s, G_f, G_s, \bH_f^1, \bH_s^1, \bH^2, F^1, F^2, F^3, F^4, F^5 \right)^\top (\bw),
	\end{equation*}
	where the elements are given by \eqref{Eqs:fullsystem-Lagrangian-linear}. Then the following proposition holds for $ \sM(\bw) : \YT \rightarrow \ZT $, where $ \YT, \ZT $ are given in Section \ref{sec:reformulation}, \ref{sec:lipschitz-estimates} respectively.
	\begin{proposition}
		\label{prop:nonlinearestimate}
		Let $ q > 5 $ and $ R > 0 $. 		
		Let $ \bw \in \YT $ be the function as in \eqref{Eqs:bw} with the associated initial data as $ \bw_0 $ as in \eqref{Eqs:bw0}. Then there exist constants $ C, \kappa > 0 $, a finite time $ T_R > 0 $ both depending on $ R $ and $ \delt $ as in \eqref{Eqs:deltaT} such that for $ 0 < T < T_R $, $ \sM : \YT \rightarrow \ZT $ is well-defined and bounded together with the estimates for $ \norm{\bw}_{\YT} \leq R $ and $ \normm{\hnab \hus^0}_{\W{1 - 2/q}(\Os)^3} + \normm{\hpis^0}_{\W{1 - 2/q}(\Os)} \leq \kappa $ that
		\begin{equation}
			\label{Eqs:Mw}
			\norm{\sM(\bw)}_{\ZT}
			\leq C (\delt + \kappa).
		\end{equation}
		Moreover, for $ \bw^1, \bw^2 \in \YT $ with $ \bw_0^1 = \bw_0^2 $ and $ \norm{\bw^1}_{\YT} \norm{\bw^2}_{\YT} \leq R $, there exist a constant $ C > 0 $, a finite time $ T_R > 0 $ depending on $ R $ and a function $ \delt $ as in \eqref{Eqs:deltaT} such that for $ 0 < T < T_R $,
		\begin{equation}
			\label{Eqs:MLipschitz}
			\norm{\sM(\bw^1) - \sM(\bw^2)}_{\ZT} \leq C (\delt + \kappa) \norm{\bw^1 - \bw^2}_{\YT}.
		\end{equation}
	\end{proposition}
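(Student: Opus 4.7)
Two estimates must be established: the Lipschitz bound \eqref{Eqs:MLipschitz} and the boundedness estimate \eqref{Eqs:Mw}. The Lipschitz estimate is essentially bookkeeping: Proposition \ref{prop:Lipschitzestimate} already provides the component-wise Lipschitz inequalities for all twelve coordinates of $\sM$ (with the same threshold $R$, the same smallness parameter $\kappa$, and a common finite time $T_R$), and summing them with a relabelled generic constant yields \eqref{Eqs:MLipschitz} directly. No additional estimate is required at this stage.

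For the boundedness estimate \eqref{Eqs:Mw}, the plan is the usual maximal-regularity reduction: construct a reference function $\bw^\ast \in \YT$ with $\bw^\ast|_{t=0} = \bw_0$ and $\norm{\bw^\ast}_\YT \leq C$ depending only on $\bw_0$, and then write
\begin{equation*}
\norm{\sM(\bw)}_\ZT \leq \norm{\sM(\bw^\ast)}_\ZT + \norm{\sM(\bw) - \sM(\bw^\ast)}_\ZT.
\end{equation*}
The second term is controlled by $C(\delt + \kappa)(\norm{\bw}_\YT + \norm{\bw^\ast}_\YT) \leq C(\delt + \kappa)$ via the already-established Lipschitz inequality applied to the pair $(\bw, \bw^\ast)$, which share initial data. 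It therefore suffices to control $\sM(\bw^\ast)$ directly.

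A natural choice of $\bw^\ast$ is (essentially) the time-constant extension of $\bw_0$: set $\hus^\ast \equiv \hus^0$, $\hpis^\ast \equiv \hpis^0$, $\hg^\ast \equiv \hg^0$, $\hcss^\ast \equiv \hc_\ast^0$, and extend $\vo_f, \co_f, \co_s, \hpif^0$ as constants in time (with appropriate harmonic extension of $\hpif^0$ from $\Gamma$ into $\Of$ so that the interior pressure lies in $\YT^3$). At such a $\bw^\ast$, one has $\hFf(\bw^\ast) - \bbi$ controlled by $\delt$ via the time integral of $\vo_f$, while $\hFs(\bw^\ast) - \bbi = \hnab \hus^0$ and $\hpis(\bw^\ast) - \hpis^0 = 0$ are of size $\kappa$ by assumption; moreover $(\hg^0)^3 - 1$ and $\hFs^\ast / \hg^\ast - \hFs^\ast$ can be estimated in terms of $\kappa$ using Lemma \ref{lemma:gpositive} and the Taylor expansion of $\hg \mapsto \hg^k$ around $1$. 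Applying Lemmas \ref{lemma:fF}–\ref{lemma:DW} to each summand in $\kf, \ks, G_{f/s}, \bH_{f/s}^1, \bH^2, F^j$ from Section \ref{sec:linearization} then yields $\norm{\sM(\bw^\ast)}_\ZT \leq C(\delt + \kappa)$, which combined with the triangle inequality gives \eqref{Eqs:Mw}.

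The main technical obstacle is arranging that $\bw^\ast$ actually lies in $\YT$ with its anisotropic Bessel regularity; in particular, $\hus^\ast$ must be an extension of $\hus^0 \in \W{2-2/q}(\Os)^3$ into $\YT^2 = \H{1/2}(0,T; \W{1}(\Os)^3) \cap \Lq{q}(0,T; \W{2}(\Os)^3)$, and the interface trace of $\hpif^\ast$ must land in the anisotropic space appearing in $\YT^3$. Both extensions follow from the trace characterization of these Bessel-potential spaces and cost nothing more than a universal constant times $\norm{\bw_0}$. A second point to watch is that the terms in $\ks$ and $\bH_f^1$ that are not manifestly small (for instance the $DW(\hFs)$-type contributions and the combination $((\hg^0)^3 - 1)\hpis^0$) must be shown to collapse to $O(\kappa)$; this relies on $DW(\bbi) = 0$, Assumption \ref{assumption:W-energy-density}, and the linearization identity in Remark \ref{remark:linearization-elastic}, exactly as in the proof of Proposition \ref{prop:Lipschitzestimate}.
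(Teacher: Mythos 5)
Your first part — deriving \eqref{Eqs:MLipschitz} by summing the component-wise bounds from Proposition~\ref{prop:Lipschitzestimate} — matches the paper exactly. For \eqref{Eqs:Mw}, your strategy (triangle inequality through a reference element, then estimate $\sM$ at that element) is sound, but your choice of reference is considerably more laborious than the paper's. The paper sets $\bw^2 = (0,0,0,0,0,0,1)$ and observes that $\sM(0,0,0,0,0,0,1) = 0$ (this follows from $DW(\bbi)=0$, $\hFs = \hFf = \bbi$, $\hpis=0$, $\hcs=0$, $\hg=1$, so every summand in $\kf,\ks,\tF,G,\bH,F^j$ is identically zero), and then applies \eqref{Eqs:MLipschitz} directly with no residual term to estimate. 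Your construction of a data-matching $\bw^\ast$ buys strict literal compliance with the hypothesis $\bw_0^1 = \bw_0^2$ in Proposition~\ref{prop:Lipschitzestimate} — the paper's choice of $\bw^2$ does not satisfy this, and is implicitly relying on the unnumbered remark after Lemma~\ref{lemma:fF} (vanishing initial trace can be traded for $\normm{\cdot}_X \leq \kappa$, at the cost of replacing $\delt$ by $\delt+\kappa$) — but at the cost of having to estimate $\sM(\bw^\ast)$ itself.

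Two points in your construction need tightening. First, the time-constant extension does not land in $\YT$ for most components: $\hus^\ast \equiv \hus^0$ would require $\hus^0 \in \W{2}(\Os)^3$ to belong to $\Lq{q}(0,T;\W{2}(\Os)^3)$, and likewise $\hpis^\ast \equiv \hpis^0$ needs $\W{1}$ and $\hvf^\ast \equiv \vo_f$, $\hc^\ast \equiv \co$ need $\W{2}$, none of which hold under the stated initial regularity. You do flag this and appeal to trace-method right inverses, and that fix is correct. But second, once $\hus^\ast$ is not time-constant, the clean identity $\hFs(\bw^\ast)-\bbi = \hnab\hus^0$ no longer holds pointwise in $t$; you instead get $\hnab\hus^\ast(t)$, which is $O(\kappa+\delt)$ in $\Lq{\infty}(0,T;\Lq{\infty})$ by Lemma~\ref{lemma:fF} and its remark. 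The conclusion is unchanged, but the argument as written mixes the time-constant picture (used for the algebraic identities) with the trace-method picture (needed for the regularity), and those two should be reconciled explicitly.
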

	\begin{proof}
		The second part follows directly from Proposition \ref{prop:Lipschitzestimate}. Then by setting $ \bw^2 = (0,0,0,0,0,0,1) $ in \eqref{Eqs:MLipschitz}, one derives \eqref{Eqs:Mw} immediately in view of the fact that $ \sM(0,0,0,0,0,0,1) = 0 $.
	\end{proof}
	Now recalling the definition of solution and initial spaces in Section \ref{sec:reformulation}, we rewrite \eqref{Eqs:fullsystem-Lagrangian-linear} in the abstract form
	\begin{equation}
		\label{abstract}
		\sL (\bw) = \sN(\bw, \wo) \quad \textrm{for all}\ \bw \in \YT,\ (\hvf^0, \co) \in \Dq,
	\end{equation}
	where $ \sL(\bw) $ denotes the left-hand side of \eqref{Eqs:fullsystem-Lagrangian-linear} and $ \sN(\bw, \wo) $ is the right-hand side.
	It follows from the linear theory in Section \ref{sec:analysis-linear} that $ \sL : \YT \rightarrow \ZT \times \Dq $ is an isomorphism.
	\begin{proof}[\bf Proof of Theorem \ref{theorem: main}]
		For $ (\hvf^0, \co) \in \Dq $ satisfying the compatibility conditions, we may solve $ \sL(\tilde{\bw}) = \sN(0, \wo) $ by some $ \tilde{\bw} \in \YT $.
		Then one can reduce the system to the case of trivial initial data by eliminating $ \tilde{\bw} $ and we are able to set a well-defined constant
		\begin{equation*}
			C_{\sL} := \sup_{0 \leq T \leq 1} \norm{\sL^{-1}}_{\cL(\ZT, \YT)},
		\end{equation*}
		which can be verified to stay boundeded as $ T \rightarrow 0 $ by the linear theories in Section \ref{sec:analysis-linear} and the estimate \eqref{Eqs:Mw}, as in \cite{AL2021a}. Choose $ R > 0 $ large such that $ R \geq 2 C_\sL \norm{(\vo_f, \co)}_{\Dq} $. Then 
		\begin{equation}
			\label{L0}
			\norm{\sL^{-1} \sN(0, \wo)}_{\YT} 
			\leq C_\sL \norm{(\vo_f, \co)}_{\Dq}
			\leq \frac{R}{2}.
		\end{equation}
		For $ \norm{\bw^i}_{\YT} \leq R $, $ i = 1, 2 $, we take $ T_R > 0 $ and $ \kappa > 0 $ small enough such that $ C_\sL C(R) (\delta(T_R) + \kappa) \leq 1/2 $,
		where $ C(R) $ is the constant in \eqref{Eqs:MLipschitz}. Then for $ 0 < T < T_R $, we infer from Theorem \ref{prop:nonlinearestimate} that
		\begin{equation}
			\begin{aligned}
				\label{L12}
				& \norm{\sL^{-1}\sN(\bw^1, \wo) - \sL^{-1}\sN(\bw^2, \wo)}_{\YT}  \\
				& \qquad \qquad \qquad \leq C_\sL C(R) \TD \norm{w^1 - w^2}_{\YT} 
				\leq \onehalf \norm{w^1 - w^2}_{\YT},
			\end{aligned}
		\end{equation}
		which implies the contraction property.
		From \eqref{L0} and \eqref{L12}, we have
		\begin{align*}
			& \norm{\sL^{-1}\sN(\bw, \wo)}_{\YT} \\
			& \qquad \qquad \leq \norm{\sL^{-1}\sN(0, \wo)}_{\YT} + \norm{\sL^{-1}\sN(w, \wo) - \sL^{-1}\sN(0, \wo)}_{\YT}
			\leq R.
		\end{align*}
		Define a ball in $ \YT $ as
		\begin{equation*}
			\cM_{R,T} := \left\{ 
				\bw \in \overline{B_{\YT}(0,R)}: \bw, \wo \text{ are as in } \eqref{Eqs:bw} \text{ and } \eqref{Eqs:bw0} 
			\right\},
		\end{equation*}
		a closed subset of $ \YT $. Hence, $ \sL^{-1}\sN : \cM_{R,T} \rightarrow \cM_{R,T} $ is well-defined for all $ 0 < T < T_R $ and a strict contraction. Since $ \YT $ is a Banach space, the Banach fixed-point Theorem implies the existence of a unique fixed-point of $ \sL^{-1}\sN $ in $ \cM_{R,T} $, i.e., \eqref{Eqs:fullsystem-Lagrangian-linear} admits a unique strong solution in $ \cM_{R,T} $ for small time $ 0 < T < T_R $.
		
		The uniqueness in $ \YT $, $ 0 < T < T_0 $, follows easily by repeating the continuity argument in \cite[Proof of Theorem 2.1]{AL2021a}, so we omit it here. 
		In summary, \eqref{Eqs:fullsystem-Lagrangian-linear} admits a unique solution in $ \YT $, equivalently, \eqref{Eqs:fullsystem-Lagrangian} admits a unique solution in $ \YT $.
	
		Now we are in the position to prove the positivity of cells concentrations. Since the regularity of $ \hvs = \ptial{t} \hus $ is much lower than that in \cite{AL2021a}, we can not proceed as in \cite{AL2021a} for $ \hc $. To overcome this problem, we take a smooth mollification $ \hvs^\epsilon $
		of $ \hvs $ for $ \epsilon > 0 $ such that 
		\begin{equation*}
			\int_0^t \hvs^\epsilon (\cdot, \tau) \d \tau \rightarrow \hus, \text{ in } \YT^2, \text{ as } \epsilon \rightarrow 0. 
		\end{equation*}
		Consider the problem 
		\begin{alignat*}{3}
			\pt c_f^\epsilon + \Div \left( c_f^\epsilon \vf \right) - D_f \Delta c_f^\epsilon & = 0, && \tin Q_f^T, \\ 
			\pt c_s^\epsilon + \Div \left( c_s^\epsilon \vs^\epsilon \right) - D_s \Delta c_s^\epsilon & = - f_s^r, && \tin Q_s^T,  
		\end{alignat*}
		with boundary and initial values as in Section \ref{sec:model-description}. Then with same argument in \cite[Proof of Theorem 2.1]{AL2021a}, one obtains
		\begin{equation*}
			0 \leq c^\epsilon(\bx,t) \in \W{1}(0,T; \Lq{q}(\Omega^t)^3) \cap \Lq{q}(0,T; \W{2}(\Omega^t \backslash \Gamma^t)^3), 
		\end{equation*}
		which means there is a subsequent still denoted by $ c^\epsilon $ and a function $ c $ such that
		\begin{equation*}
			c^\epsilon \rightharpoonup c \text{ weakly in } \W{1}(0,T; \Lq{q}(\Omega^t)^3) \cap \Lq{q}(0,T; \W{2}(\Omega^t \backslash \Gamma^t)^3), 
		\end{equation*}
		and 
		\begin{equation*}
			c^\epsilon \rightarrow c \text{ in }  \cD'(Q^T \backslash S^T),
		\end{equation*}
		where $ \cD'(U) $ denotes the space of distributions on $ U $, $ Q^T, S^T $ are defined in Section \ref{sec:model-description}. It is standard to verify $ c $ solves the same equation with $ \vs^\epsilon $ replaced by $ \vs $. We only give the sketch of proof with respect to $ \Div(c_s^\epsilon \vs^\epsilon) $ as an example.
		\begin{align*}
			& \int_{0}^T \int_{\Ost} c_s^\epsilon \vs^\epsilon \cdot \nabla \phi \,\d \bx \d t 
			- \int_{0}^T \int_{\Ost} c_s \vs \cdot \nabla \phi \,\d \bx \d t \\ 
			& \qquad 
			= \int_{0}^T \int_{\Ost} (c_s^\epsilon - c_s) \vs^\epsilon \cdot \nabla \phi \,\d \bx \d t 
			+ \int_{0}^T \int_{\Ost} c_s (\vs^\epsilon - \vs) \cdot \nabla \phi \,\d \bx \d t 
			\rightarrow 0,
		\end{align*}
		as $ \epsilon \rightarrow 0 $, for all $ \phi \in \cD(Q^T \backslash S^T) $, because of the regularity and convergence of $ c_s^\epsilon $ and $ \vs^\epsilon $.
		Note that
		\begin{equation*}
			0 \leq \int_{0}^T \int_{\Omega^t \backslash \Gamma^t} c^\epsilon \phi \,\d \bx \d t  
			\leq \limsup_{\epsilon \rightarrow 0} \int_{0}^T \int_{\Omega^t \backslash \Gamma^t} c^\epsilon \phi \,\d \bx \d t  
			= \int_{0}^T \int_{\Omega^t \backslash \Gamma^t} c \phi \,\d \bx \d t, 
		\end{equation*}
		for all $ \phi \in \cD(Q^T \backslash S^T) $, $ \phi \geq 0 $, one concludes that $ c \geq 0 $, a.e. in $ Q^T \backslash S^T $. The positivity of $ \hcss $ and $ \hg $ then follows automatically, as showed in \cite{AL2021a}, which completes the proof.
	\end{proof}
	

\appendix

\section{Stokes resolvent problem}
	\label{sec:sta-Stokes-lower-regularity} 
	In this section, we give a short proof the solvability of the following Stokes resolvent problem with mixed boundary conditions. Let $ \Omega \subset \bbr^3 $ be a bounded domain of class $ C^{3-} $ with boundary $\partial \OM = \Gamma_1 \cup \Gamma_2 $ consisting of two closed, disjoint, nonempty components. Consider the resolvent problem
	\begin{equation}
		\label{Eqs:sta-lambda}
		\begin{alignedat}{3}
			\lambda \bu - \Div (D^2 W(\bbi) \nabla \bu) + \nabla \pi & = \mathbf{f}, && \tin \OM, \\
			\Div \bu & = 0, && \tin \OM, \\
			\bu & = 0, && \ton \Gamma_1, \\
			(D^2 W(\bbi) \nabla \bu - \pi\bbi) \bn & = 0, && \ton \Gamma_2,
		\end{alignedat}
	\end{equation}
	where $ \bn $ is the outer unit normal on the boundary, $ W : \bbr^{3 \times 3} \rightarrow \bbr_+ $ is a scalar function with Assumption \ref{assumption:W-energy-density} holding. 
	\begin{theorem} \label{thm:sta-stokes-lambda}
		Let $ 1 < q < \infty $. Assume that $ \OM \subset \bbr^3 $ is the domain defined above.
		Given $ \mathbf{f} \in \Lq{q}(\OM)^3 $, there exists some $ \lambda_0 \in \bbr $ such that for all $ \lambda > \lambda_0 $, \eqref{Eqs:sta-lambda} admits a unique solution $ (\bu, \pi) $ satisfying
		\begin{equation*}
			\bu \in \W{2}(\OM)^3, \quad
			\pi \in \W{1}(\OM).
		\end{equation*} 
		Moreover, 
		\begin{equation*}
			\lambda \norm{\bu}_{\Lq{2}(\OM)^3} + \norm{\bu}_{\W{2}(\OM)^3} + \norm{\pi}_{\W{1}(\OM)} 
				\leq C 
					\norm{\mathbf{f}}_{\Lq{q}(\OM)^3}
				.
		\end{equation*}
	\end{theorem}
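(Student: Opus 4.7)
The plan is to recast the system as a resolvent equation for a generalized Stokes-type operator with mixed boundary conditions and invoke the $L^q$-maximal regularity machinery already cited in the paper (Bothe--Pr\"uss, Pr\"uss--Simonett, Abels). First, I would introduce the Helmholtz--Weyl projection $\bbp_q : \Lq{q}(\OM)^3 \to \Lqs{q}(\OM)$ adapted to the mixed setup (with no-flux through $\Gamma_1$), whose existence and boundedness is provided by Abels~\cite[Appendix~A]{Abels2010}. Define the Stokes-type operator
\begin{equation*}
	\cA_q \bu := -\bbp_q \Div(D^2 W(\bbi) \nabla \bu),
\end{equation*}
with domain
\begin{equation*}
	\cD(\cA_q) = \{ \bu \in \W{2}(\OM)^3 \cap \Lqs{q}(\OM) : \rv{\bu}_{\Gamma_1} = 0,\ \rv{\cP_{\bn}((D^2 W(\bbi) \nabla \bu) \bn)}_{\Gamma_2} = 0 \}.
\end{equation*}
Applying $\bbp_q$ to \eqref{Eqs:sta-lambda}, the resolvent problem becomes $(\lambda + \cA_q)\bu = \bbp_q \mathbf{f}$, and the pressure is a posteriori recovered from $\nabla \pi = (I-\bbp_q)(\mathbf{f} - \lambda \bu + \Div(D^2 W(\bbi) \nabla \bu))$ via a weak mixed Neumann/Dirichlet problem, giving $\pi \in \W{1}(\OM)$ as soon as $\bu \in \W{2}(\OM)^3$.

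The next step is to establish sectoriality of $\cA_q$ by appealing to the maximal $L^q$-regularity theory for the associated \emph{non-stationary} generalized Stokes problem on $(0,T)$ with the same boundary conditions (Bothe--Pr\"uss~\cite[Theorem~4.1]{BP2007} or Pr\"uss--Simonett~\cite[Theorem~7.3.1]{PS2016}). The structural hypotheses needed are: (i) normal ellipticity of the bulk operator $-\Div(D^2 W(\bbi) \nabla \cdot)$, which is exactly the Legendre--Hadamard condition of Remark~\ref{remark:epllipticity}; (ii) the Lopatinskii--Shapiro condition at both $\Gamma_1$ (standard full Dirichlet for Stokes) and $\Gamma_2$, where the condition $(D^2 W(\bbi) \nabla \bu - \pi \bbi)\bn = 0$ is exactly the ``free surface with traction'' Stokes boundary condition, for which the complementing condition is classical. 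Because $\Gamma_1 \cap \Gamma_2 = \emptyset$ and both are closed components of $\partial\OM$, a partition of unity subordinate to the interior, a neighbourhood of $\Gamma_1$, and a neighbourhood of $\Gamma_2$ separates the two boundary conditions, allowing the cited half-space results to be applied independently. Maximal regularity then implies $\cR$-sectoriality of $\cA_q$ (via Weis's theorem); equivalently, $\cA_q$ generates a bounded analytic semigroup on $\Lqs{q}(\OM)$, so there exists $\lambda_0 \in \bbr$ such that $\lambda + \cA_q$ is an isomorphism from $\cD(\cA_q)$ onto $\Lqs{q}(\OM)$ for all $\lambda > \lambda_0$, with the uniform resolvent estimate $\norm{\bu}_{\W{2}(\OM)^3} + \lambda \norm{\bu}_{\Lq{q}(\OM)^3} \leq C \norm{\mathbf{f}}_{\Lq{q}(\OM)^3}$.

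The $\Lq{2}$-component of the stated estimate is obtained separately and more elementarily: testing \eqref{Eqs:sta-lambda} against $\bu$ (using the Dirichlet condition on $\Gamma_1$, the natural Neumann condition on $\Gamma_2$, and $\Div\bu = 0$ to kill the pressure) yields $\lambda\norm{\bu}_{\Lq{2}(\OM)^3}^2 + \int_\OM D^2 W(\bbi) \nabla\bu : \nabla\bu \,\d x = \inner{\mathbf{f}}{\bu}$; coercivity of the right-hand integrand via Remark~\ref{remark:epllipticity} and Korn's inequality, combined with H\"older and the $\W{2}(\OM)^3$-bound just obtained, then controls $\lambda \norm{\bu}_{\Lq{2}(\OM)^3}$ by $\norm{\mathbf{f}}_{\Lq{q}(\OM)^3}$. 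I expect the main technical obstacle to be the verification of the Lopatinskii--Shapiro condition at $\Gamma_2$ for the non-standard elasticity tensor $D^2 W(\bbi)$: one must check that the frozen-coefficient half-space Stokes system with traction boundary is uniquely solvable for each tangential Fourier mode, which ultimately reduces to normal ellipticity but requires a careful inspection of the associated ODE system in the normal variable. Uniqueness of $(\bu,\pi)$ follows by applying existence to the homogeneous problem and using that $\lambda + \cA_q$ is injective for $\lambda > \lambda_0$.
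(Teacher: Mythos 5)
Your proposal takes essentially the same route as the paper: define the generalized Stokes operator $\cA_q$ via the Helmholtz--Weyl projection of Abels~\cite[Appendix~A]{Abels2010} on $\Lqs{q}(\OM)$, invoke the maximal $L^q$-regularity / $\cR$-sectoriality theory for strongly normally elliptic operators with mixed Dirichlet/traction boundary conditions (Bothe--Pr\"uss and Pr\"uss--Simonett), solve the resolvent equation for $\lambda$ above the spectral bound, and recover the pressure a posteriori via a mixed Laplace problem. The paper cites \cite[Theorem~7.3.2]{PS2016} directly for $\lambda + \cA_q \in \cM\cR_q(\Lqs{q}(\OM))$ rather than going through Weis's theorem to convert maximal regularity into $\cR$-sectoriality, but these are equivalent packagings. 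Your additional remarks on the Lopatinskii--Shapiro condition and the partition of unity separating $\Gamma_1$ from $\Gamma_2$ correspond to what the cited theorems encode.

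One caveat concerns your treatment of the $\Lq{2}$-term in the estimate. The paper's proof does not actually derive it separately (it simply asserts solvability from maximal regularity), and the appearance of $\Lq{2}$ rather than $\Lq{q}$ in the statement is almost certainly a typo, since the resolvent estimate accompanying the $\cM\cR_q$ property yields $\lambda\|\bu\|_{\Lq{q}} + \|\cA_q \bu\|_{\Lq{q}} \leq C\|\mathbf{f}\|_{\Lq{q}}$ directly. Your energy argument, if taken at face value, does not quite close for all $1 < q < \infty$: testing against $\bu$ gives $\lambda\|\bu\|_{\Lq{2}}^2 \lesssim |\inner{\mathbf{f}}{\bu}|$, and controlling the right side by $\|\mathbf{f}\|_{\Lq{q}}\|\bu\|_{\W{2}}$ (or by H\"older when $q < 2$) yields at best a $\sqrt{\lambda}$-gain on $\|\bu\|_{\Lq{2}}$, not the stated $\lambda$-gain; the factor of $\lambda$ is only recovered if you can bound $|\inner{\mathbf{f}}{\bu}|$ by $\|\mathbf{f}\|_{\Lq{q}}\|\bu\|_{\Lq{2}}$, i.e.\ for $q \geq 2$. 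In short, either read $\Lq{2}$ as $\Lq{q}$ and take the estimate straight from the resolvent bound, or, if you insist on the $\Lq{2}$-norm, restrict the energy argument to $q\geq 2$ and handle $q<2$ by duality/interpolation.
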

	\begin{proof}
		The proof is based on the maximal regularity of a generalized Stokes equation, see e.g. Bothe--Pr\"uss \cite[Theorem 4.1]{BP2007}, Pr\"uss--Simonett \cite[Theorem 7.3.1]{PS2016}. Let us recall the definition of solenoidal space
		\begin{equation*}
			\Lqs{q}(\OM) := \{\bu \in \Lq{q}(\OM)^3: \Div \bu = 0, \rv{\bn \cdot \bu}_{\Gamma_1} = 0 \}.
		\end{equation*}
		Then we define a Stokes-type operator as in Section \ref{sec:sta-Stokes-mixed} that
		\begin{equation*}
			\cA_q(\bu) := \bbp_q \big( - \Div (D^2 W(\bbi) \nabla \bu) \big) \text{ for all } \bu \in \cD(\cA_q),
		\end{equation*}
		with
		\begin{equation*}
			\cD(\cA_q) = \left\{ \bu \in \W{2}(\OM)^3 \cap \Lqs{q}(\OM) : \rv{\bu}_{\Gamma_1} = 0, \ \rv{\cP_{\bn}((D^2 W(\bbi) \nabla \bu) \bn)}_{\Gamma_2} = 0 \right\},
		\end{equation*}
		where $ \bbp_q $ denotes the \textit{Helmholtz--Weyl projection} on $ \Lqs{q}(\OM) $, see e.g. \cite[Appendix A]{Abels2010} for the existence of the projection with mixed boundary conditions. $ \cP_{\bn} := \bbi - \bn \otimes \bn $ is the tangential projection onto $ \partial \Omega $. As in Remark \ref{remark:epllipticity}, the operator $ - \Div (D^2 W(\bbi) \nabla \cdot) $ is strongly normally elliptic. By e.g. Pr\"uss--Simonett \cite[Theorem 7.3.2]{PS2016}, one knows that $ \lambda + \cA_q \in \cM \cR_q(\Lqs{q}(\OM)) $ for all $ \lambda > \lambda_0 := s(- \cA_q) $, where $ \cM \cR_q(\Lqs{q}(\OM)) $ means the class of maximal $ L^q $-regularity in $ \Lqs{q}(\OM) $ and $ s(- \cA_q) $ denotes the spectral bound of $ - \cA_q $. Consequently, for $ \lambda > \lambda_0 $ and $ \mathbf{f} \in \Lqs{q}(\OM) $, 
		\begin{equation*} 
			\lambda \bu + \cA_q \bu = \mathbf{f},
		\end{equation*}
		is uniquely solvable with $ \bu \in \cD(\cA_q) $, which implies that of \eqref{Eqs:sta-lambda}. 
%
		
		Now it remains to recover the pressure $ \pi $. To this end, we solve the Dirichlet--Neumann problem 
		\begin{equation}
			\label{Eqs:Laplace-pi}
			\begin{alignedat}{3}
				\Delta \pi & = \Div ( \mathbf{f} - \lambda \bu + \Div (D^2 W(\bbi) \nabla \bu) ), && \tin \OM, \\
				\ptial{\bn} \pi & = ( \mathbf{f} - \lambda \bu + \Div (D^2 W(\bbi) \nabla \bu) ) \cdot \bn, && \ton \Gamma_1, \\
				\pi & = (D^2 W(\bbi) \nabla \bu) \bn \cdot \bn, && \ton \Gamma_2,
			\end{alignedat}
		\end{equation}
		weakly, which is equivalent to the following weak formation
		\begin{equation}
			\label{Eqs:Laplace-pi-weak}
			\int_{\OM} \nabla \pi \cdot \nabla \varphi \,\d x = \int_{\OM} \underbrace{(\mathbf{f} - \lambda \bu + \Div (D^2 W(\bbi) \nabla \bu))}_{=: \tilde{\mathbf{f}}} \cdot \nabla \varphi \,\d x, \quad \forall \varphi \in W_{q',\Gamma_2}^1 (\OM).
		\end{equation}
		Since $ \mathbf{f} \in \Lq{q}(\OM)^3 $ and $ \bu \in \W{2}(\OM)^3 \cap \Lqs{q}(\OM) $, we have $ \tilde{\mathbf{f}} \in \Lq{q}(\OM)^3 $. Then \eqref{Eqs:Laplace-pi-weak} admits a unique solution $ \pi \in \W{1}(\OM) $, with the aid of \cite[Theorem 7.4.3]{PS2016}. The boundary regularity is easy due to the third equation of \eqref{Eqs:Laplace-pi}, combining the trace theorem.
	\end{proof}
	\begin{remark}
		\label{remark:lambda=0}
		As $ \cD(\cA_{q}) $ embeds compactly into $ \Lqs{q}(\OM) $, the Stokes-type operator $ \cA_q $ has compact resolvent. Therefore, its spectrum consists only of eigenvalues of finite algebraic multiplicity by spectral theory for compact operators (see e.g. Alt \cite{Alt2016}), and is independent of $ q $ by Sobolev embeddings. So it is enough to investigate these eigenvalues for the case $ q = 2 $. Let $ \omega $ be the eigenvalue of $ - \cA_2 $. Employing the energy method, Lemma \ref{remark:epllipticity} and the Korn's inequality, we have
		\begin{align*}
			\omega \int_{\OM} \abs{\bu}^2 \,\d x 
				= - \inner{\cA_2 \bu}{\bu}_{\Lqs{2}(\OM)}
				& = - \int_{\OM} D^2 W(\bbi) \nabla \bu : \nabla \bu \,\d x \\
				& \leq - C \int_{\OM} \abs{\nabla \bu + \nabla^\top \bu}^2 \,\d x
				\leq - C \int_{\OM} \abs{\nabla \bu}^2 \,\d x,
		\end{align*}
		which shows that $ \omega $ is real and nonpositive. Since $ \OM $ is bounded and $ \Gamma_1 $ is assumed to be nonempty, the Poincar\'e's inequality is valid and one obtains
		\begin{equation*}
			\omega \int_{\OM} \abs{\bu}^2 \,\d x 
				\leq - C \int_{\OM} \abs{\bu}^2 \,\d x,
		\end{equation*}
		which implies $ w \leq - C $, where $ C > 0 $ does not depend on $ \lambda $. Then one conclude that $ \lambda_0 $ defined in Theorem \ref{thm:sta-stokes-lambda} is negative and hence if $ \lambda = 0 $, the Theorem still holds true.
	\end{remark}


\begin{thebibliography}{999}
	\bibitem{Abels2010} 
	H. Abels, Nonstationary Stokes system with variable viscosity in bounded and unbounded domains, Discrete Contin. Dyn. Syst. Ser. S {\bf 3} (2010), no.~2, 141--157. \doi{10.3934/dcdss.2010.3.141}
	
	\bibitem{AL2021a}
	H. Abels\ and\ Y. Liu, On a fluid-structure interaction problem for plaque growth, Preprint (2021), \arxiv{2110.00042}.
			
	\bibitem{AL2021b}
	H. Abels\ and\ Y. Liu, On a fluid-structure interaction problem for plaque growth: cylindrical domain, Preprint (2021), \arxiv{2112.12538}.
	

	\bibitem{Alt2016}
	H. W. Alt, {\it Linear functional analysis}, translated from the German edition by Robert N\"{u}rnberg, Universitext, Springer-Verlag London, Ltd., London, 2016. \doi{10.1007/978-1-4471-7280-2}
	
		

	
	\bibitem{Beirao da Veiga2004}
	H. Beir\~{a}o da Veiga, On the existence of strong solutions to a coupled fluid-structure evolution problem, J. Math. Fluid Mech. {\bf 6} (2004), no.~1, 21--52. \doi{10.1007/s00021-003-0082-5}
	
	
	\bibitem{BS2018}
	D. Breit\ and\ S. Schwarzacher, Compressible fluids interacting with a linear-elastic shell, Arch. Ration. Mech. Anal. {\bf 228} (2018), no.~2, 495--562. \doi{10.1007/s00205-017-1199-8}
	
	\bibitem{BP2007}
	D. Bothe\ and\ J. Pr\"{u}ss, $L_P$-theory for a class of non-Newtonian fluids, SIAM J. Math. Anal. {\bf 39} (2007), no.~2, 379--421. \doi{10.1137/060663635}
	
	
	\bibitem{BG2010}
	M. Boulakia\ and\ S. Guerrero, Regular solutions of a problem coupling a compressible fluid and an elastic structure, J. Math. Pures Appl. (9) {\bf 94} (2010), no.~4, 341--365. \doi{10.1016/j.matpur.2010.04.002}
	
	
	\bibitem{CDEG2005}
	A. Chambolle, B. Desjardins, M. Esteban\ and\ C. Grandmont, Existence of weak solutions for the unsteady interaction of a viscous fluid with an elastic plate, J. Math. Fluid Mech. {\bf 7} (2005), no.~3, 368--404. \doi{10.1007/s00021-004-0121-y} 
	
	
	\bibitem{CCS2007}
	C. H. A. Cheng, D. Coutand\ and\ S. Shkoller, Navier--Stokes equations interacting with a nonlinear elastic biofluid shell, SIAM J. Math. Anal. {\bf 39} (2007), no.~3, 742--800. \doi{10.1137/060656085}
	
	\bibitem{CS2010}
	C. H. A. Cheng\ and\ S. Shkoller, The interaction of the 3D Navier--Stokes equations with a moving nonlinear Koiter elastic shell, SIAM J. Math. Anal. {\bf 42} (2010), no.~3, 1094--1155. \doi{10.1137/080741628}
	
	
	\bibitem{CS2005}
	D. Coutand\ and\ S. Shkoller, Motion of an elastic solid inside an incompressible viscous fluid, Arch. Ration. Mech. Anal. {\bf 176} (2005), no.~1, 25--102. \doi{10.1007/s00205-004-0340-7}
	
	\bibitem{CS2006}
	D. Coutand\ and\ S. Shkoller, The interaction between quasilinear elastodynamics and the Navier--Stokes equations, Arch. Ration. Mech. Anal. {\bf 179} (2006), no.~3, 303--352. \doi{10.1007/s00205-005-0385-2}
	
	\bibitem{DS2020}
	R. Denk\ and\ J. Saal, $L^p$-theory for a fluid-structure interaction model, Z. Angew. Math. Phys. {\bf 71} (2020), no.~5, Paper No. 158, 18 pp. \doi{10.1007/s00033-020-01387-5}
	
	
	\bibitem{GSS2005}
	G. P. Galdi, C. G. Simader\ and\ H. Sohr, A class of solutions to stationary Stokes and Navier-Stokes equations with boundary data in $W^{-1/q,q}$, Math. Ann. {\bf 331} (2005), no.~1, 41--74. \doi{10.1007/s00208-004-0573-7}

	
	
	
	\bibitem{Grandmont2008}
	C. Grandmont, Existence of weak solutions for the unsteady interaction of a viscous fluid with an elastic plate, SIAM J. Math. Anal. {\bf 40} (2008), no.~2, 716--737. \doi{10.1137/070699196}
	
	
	\bibitem{GHL2019}
	C. Grandmont, M. Hillairet\ and\ J. Lequeurre, Existence of local strong solutions to fluid-beam and fluid-rod interaction systems, Ann. Inst. H. Poincar\'{e} Anal. Non Lin\'{e}aire {\bf 36} (2019), no.~4, 1105--1149. \doi{10.1016/j.anihpc.2018.10.006}
	
	\bibitem{Goriely2017:growth}
	A. Goriely, {\it The mathematics and mechanics of biological growth}, Interdisciplinary Applied Mathematics, 45, Springer, New York, 2017. \doi{10.1007/978-0-387-87710-5}
	
	\bibitem{Gurtin2010}
	M. E. Gurtin, E. Fried\ and\ L. Anand, {\it The mechanics and thermodynamics of continua}, Cambridge University Press, Cambridge, 2010. \doi{10.1017/CBO9780511762956}
	
	
	\bibitem{IKLT2012}
	M. Ignatova, I. Kukavica, I. Lasiecka\ and\ A. Tuffaha, On well-posedness for a free boundary fluid-structure model, J. Math. Phys. {\bf 53} (2012), no.~11, 115624, 13 pp. \doi{10.1063/1.4766724}
%
	
	\bibitem{IKLT2017}
	M. Ignatova, I. Kukavica, I. Lasiecka\ and\ A. Tuffaha, Small data global existence for a fluid-structure model, Nonlinearity {\bf 30} (2017), no.~2, 848--898. \doi{10.1088/1361-6544/aa4ec4}
	
	
	\bibitem{JC2012}
	G. W. Jones\ and\ S. J. Chapman, Modeling growth in biological materials, SIAM Rev. {\bf 54} (2012), no.~1, 52--118. \doi{10.1137/080731785}
	
	\bibitem{KT2012}
	I. Kukavica\ and\ A. Tuffaha, Solutions to a fluid-structure interaction free boundary problem, Discrete Contin. Dyn. Syst. {\bf 32} (2012), no.~4, 1355--1389. \doi{10.3934/dcds.2012.32.1355}
	
	
	
	\bibitem{LR2013}
	D. Lengeler\ and\ M. R\r{u}\v{z}i\v{c}ka, Weak solutions for an incompressible Newtonian fluid interacting with a Koiter type shell, Arch. Ration. Mech. Anal. {\bf 211} (2014), no.~1, 205--255. \doi{10.1007/s00205-013-0686-9}
	
	
	\bibitem{Lequeurre2013}
	J. Lequeurre, Existence of strong solutions for a system coupling the Navier--Stokes equations and a damped wave equation, J. Math. Fluid Mech. {\bf 15} (2013), no.~2, 249--271. \doi{10.1007/s00021-012-0107-0}
	
	\bibitem{Lunardi2018}
	A. Lunardi, {\it Interpolation theory}, third edition, Appunti. Scuola Normale Superiore di Pisa (Nuova Serie), 16, Edizioni della Normale, Pisa, 2018. \doi{10.1007/978-88-7642-638-4}
	
	\bibitem{MRR2020}
	D. Maity, J.-P. Raymond\ and\ A. Roy, Maximal-in-time existence and uniqueness of strong solution of a 3D fluid-structure interaction model, SIAM J. Math. Anal. {\bf 52} (2020), no.~6, 6338--6378. \doi{10.1137/18M1178451}
%
	
	\bibitem{MT2021NARWA}
	D. Maity\ and\ T. Takahashi, Existence and uniqueness of strong solutions for the system of interaction between a compressible Navier--Stokes-Fourier fluid and a damped plate equation, Nonlinear Anal. Real World Appl. {\bf 59} (2021), 103267, 34 pp. \doi{10.1016/j.nonrwa.2020.103267}
	
	\bibitem{MS2012}
	M. Meyries\ and\ R. Schnaubelt, Interpolation, embeddings and traces of anisotropic fractional Sobolev spaces with temporal weights, J. Funct. Anal. {\bf 262} (2012), no.~3, 1200--1229. \doi{10.1016/j.jfa.2011.11.001}
	
	\bibitem{Mitra2020}
	S. Mitra, Local existence of strong solutions of a fluid-structure interaction model, J. Math. Fluid Mech. {\bf 22} (2020), no.~4, Paper No. 60, 38 pp. \doi{10.1007/s00021-020-00520-8}
	
	
	\bibitem{MC2013}
	B. Muha\ and\ S. \v{C}ani\'{c}, Existence of a weak solution to a nonlinear fluid-structure interaction problem modeling the flow of an incompressible, viscous fluid in a cylinder with deformable walls, Arch. Ration. Mech. Anal. {\bf 207} (2013), no.~3, 919--968. \doi{10.1007/s00205-012-0585-5}
	
	
	\bibitem{Pruess2019}
	J. Pr\"{u}ss, $H^\infty$-calculus for generalized Stokes operators, J. Evol. Equ. {\bf 18} (2018), no.~3, 1543--1574. \doi{10.1007/s00028-018-0466-y}
	
	\bibitem{PS2016}
	J. Pr\"{u}ss\ and\ G. Simonett, {\it Moving interfaces and quasilinear parabolic evolution equations}, Monographs in Mathematics, 105, Birkh\"{a}user/Springer, 2016. \doi{10.1007/978-3-319-27698-4}
	
	\bibitem{RV2014}
	J.-P. Raymond\ and\ M. Vanninathan, A fluid-structure model coupling the Navier--Stokes equations and the Lam\'{e} system, J. Math. Pures Appl. (9) {\bf 102} (2014), no.~3, 546--596. \doi{10.1016/j.matpur.2013.12.004}
	
	
	\bibitem{RHM1994}
	E. K. Rodriguez, A. Hoger, and A. D. McCulloch, Stress-dependent finite growth in soft elastic tissues, J. Biomech. {\bf 27} (1994), pp. 455–467. \doi{10.1016/0021-9290(94)90021-3}
		
	\bibitem{RS1996} 
	T. Runst\ and\ W. Sickel, {\it Sobolev spaces of fractional order, Nemytskij operators, and nonlinear partial differential equations}, De Gruyter Series in Nonlinear Analysis and Applications, 3, Walter de Gruyter \& Co., Berlin, 1996. \doi{10.1515/9783110812411}
	
	\bibitem{SWY2021}
	L. Shen, S. Wang\ and\ R. Yang, Existence of local strong solutions for the incompressible viscous and non-resistive MHD-structure interaction model, J. Differential Equations {\bf 272} (2021), 473--543. \doi{10.1016/j.jde.2020.09.039}
	
	
	\bibitem{Schumacher2009}
	K. Schumacher, The stationary Navier-Stokes equations in weighted Bessel-potential spaces, J. Math. Soc. Japan {\bf 61} (2009), no.~1, 1--38. \doi{10.2969/jmsj/06110001}
	
	\bibitem{Simon1990}
	J. Simon, Sobolev, Besov and Nikol$^\prime$ski\u{\i} fractional spaces: imbeddings and comparisons for vector valued spaces on an interval, Ann. Mat. Pura Appl. (4) {\bf 157} (1990), 117--148. \doi{10.1007/BF01765315}
	
	\bibitem{Stein1970}
	E. M. Stein, {\it Singular integrals and differentiability properties of functions}, Princeton Mathematical Series, No. 30, Princeton University Press, Princeton, NJ, 1970. \href{https://press.princeton.edu/books/hardcover/9780691080796/singular-integrals-and-differentiability-properties-of-functions-pms}{ISBN: 978-0-6910-8079-6} 
	
	\bibitem{Triebel1978}
	H. Triebel, {\it Interpolation theory, function spaces, differential operators}, North-Holland Mathematical Library, 18, North-Holland Publishing Co., Amsterdam, 1978. \href{https://www.sciencedirect.com/bookseries/north-holland-mathematical-library/vol/18}{ISBN: 978-0-7204-0710-5}
%

	
	\bibitem{TW2020}
	S. Trifunovi\'{c}\ and\ Y.-G. Wang, Existence of a weak solution to the fluid-structure interaction problem in 3D, J. Differential Equations {\bf 268} (2020), no.~4, 1495--1531. \doi{10.1016/j.jde.2019.09.002}
	
	\bibitem{Yang2016}
	Y. Yang, W. J\"{a}ger, M. Neuss-Radu\ and\ T. Richter, Mathematical modeling and simulation of the evolution of plaques in blood vessels, J. Math. Biol. {\bf 72} (2016), no.~4, 973--996. \doi{10.1007/s00285-015-0934-8}
	\end{thebibliography}
\end{document}